\newcommand\numberthis{\addtocounter{equation}{1}\tag{\theequation}}
\definecolor{mygreen}{rgb}{0.01,0.6,0.2}
\definecolor{myblue}{rgb}{0.01, 0.18, 1.0}
\newtheorem{theorem}{Theorem}
\newtheorem{proposition}[theorem]{Proposition}
\newtheorem{lemma}[theorem]{Lemma}
\theoremstyle{definition}
\newtheorem{definition}[theorem]{Definition}
\newtheorem{remark}[theorem]{Remark}
\newtheorem{example}[theorem]{Example}
\numberwithin{equation}{section}
\numberwithin{theorem}{section}
\numberwithin{equation}{section}
\numberwithin{theorem}{section}
\title[Existence and non-existence results]{Existence and non-existence results to a mixed Schrodinger system in a plane}
 \author[H.Hajaiej, R. Kumar, T. Mukherjee \&L. Song]{HICHEM HAJAIEJ$^{1}$, Rohit Kumar$^2$, Tuhina Mukherjee$^{2}$ and Linjie Song$^3$}
 \subjclass{35R11, 35D30, 35M30.}
 \keywords{Mixed Schrodinger operator, system of PDEs in plane, variational methods, concentration-compactness, non-existence}
 \thanks{$^*$Corresponding author.}
\begin{document}
\def\dxy{\mathrm{d}x \mathrm{d}y}
\def\d{\mathrm{d}}
\def\dxz{\mathrm{d}x \mathrm{d}z}
\def\dyz{\mathrm{d}y \mathrm{d}z}
\def\R{\mathbb{R}}
\def\({\bigg(}
\def\){\bigg)}
\def\H{\mathcal{H}^{1,s}(\R^2)}
\def\Hi{\mathcal{H}^{1,s_i}(\R^2)}
\def\Hone{\mathcal{H}^{1,s_1}(\R^2)}
\def\Htwo{\mathcal{H}^{1,s_2}(\R^2)}
\def\ui{\int_{\R^2} \big(|u|^2 + |\partial_x u|^2 + |(-\Delta)^{s_{i}/2}_y u|^2 \big) \dxy}
\def\uone{|u|^2 + |\partial_x u|^2 + |(-\Delta)^{s_{1}/2}_{y} u|^2 }
\def\unone{|u_n|^2 + |\partial_x u_n|^2 + |(-\Delta)^{s_{1}/2}_{y} u_n|^2 }
\def\v2{\int_{\R^2} \big(|v|^2 + |\partial_x v|^2 + |(-\Delta)^{s_{2}/2}_yv|^2 \big) \dxy}
\def\uq{\(\int_{\R^2}|u|^q \dxy\)^{2/q}}
\def\u2si{\(\int_{\R^2}|u|^{2_{s_i}} \dxy\)^{2/2_{s_i}}}
\def\Dely{(-\Delta)_{y}^{s/2}}
\def\Delone{(-\Delta)_{y}^{s_1/2}}
\def\Deltwo{(-\Delta)_{y}^{s_1/2}}
\def\C_c{C_{c}^{\infty}(\R^2)}
\definecolor{dogwoodrose}{rgb}{0.84, 0.09, 0.41}
\definecolor{lava}{rgb}{0.81, 0.06, 0.13}
\def\tb{\color{blue}}
\definecolor{ao(english)}{rgb}{0.0, 0.5, 0.0}
\def\ao{\color{ao(english)}}
\definecolor{robineggblue}{rgb}{0.0, 0.8, 0.8}
\def\tro{\color{tiffanyblue}}
\definecolor{tiffanyblue}{rgb}{0.04, 0.73, 0.71}
\definecolor{tenné(tawny)}{rgb}{0.8, 0.34, 0.0}
\def\ten{\color{tenné(tawny)}}
\definecolor{aqua}{rgb}{0.0, 1.0, 1.0}
\def\aqua{\color{aqua}}
\maketitle 
\centerline{$^{1}$Department of Mathematics, California State University at Los Angeles}
\centerline{Los Angeles, CA 90032,
USA} 
 \centerline{$^{2}$Department of Mathematics, Indian Institute of Technology Jodhpur,}
 \centerline{Rajasthan 342030, India}
\centerline{$^{3}$Department of Mathematical Sciences, Tsinghua University, Beijing 100084} 
 \begin{abstract}
  This article focuses on the existence and non-existence of solutions for the following system of local and nonlocal type
  \begin{equation*}
    \left\{
     \begin{aligned}
         -\partial_{xx}u + (-\Delta)_{y}^{s_{1}} u + u - u^{2_{s_{1}}^{}-1} = \kappa \alpha h(x,y) u^{\alpha-1}v^{\beta} & \quad \mbox{in} ~ \mathbb{R}^{2}, \\
            -\partial_{xx}v + (-\Delta)_{y}^{s_{2}} v + v- v^{2_{s_{2}}^{}-1} = \kappa \beta h(x,y) u^{\alpha}v^{\beta-1} & \quad \mbox{in} ~ \mathbb{R}^{2},\\
             u,v ~ \geq  ~0 \quad \mbox{in} ~ \mathbb{R}^{2},
    \end{aligned}
    \right.
\end{equation*}
 where $s_{1},s_{2} \in (0,1),~\alpha,\beta>1,~\alpha+\beta \leq \min \{ 2_{s_{1}}^{},2_{s_{2}}^{}\}$, and $2_{s_i}^{} = \frac{2(1+s_i)}{1-s_i}, i=1,2$. The existence of a ground state solution entirely depends on the behaviour of the parameter $\kappa>0$ and on the function $h$. In this article, we prove that a ground state solution exists in the subcritical case if $\kappa$ is large enough and $h$ satisfies \eqref{condition on h}.  Further, if $\kappa$ becomes very small in this case then there does not exist any solution to our system. The study in the critical case, i.e. $s_1=s_2=s, \alpha+\beta=2_s$, is more complex and the solution exists only for large $\kappa$ and radial $h$ satisfying \eqref{H one}. Finally, we establish a Pohozaev identity which enables us to prove the non-existence results under some smooth assumptions on $h$.
 \end{abstract} 
\section{Introduction}\label{S1}
\noindent In this article, we want to study the existence and non-existence of solutions for the following system associated with mixed Schrodinger operator: 
\begin{equation} \label{main problem}
     \left\{
     \begin{aligned}
         -\partial_{xx}u + (-\Delta)_{y}^{s_{1}} u + u - u^{2_{s_{1}}^{}-1} = \kappa \alpha h(x,y) u^{\alpha-1}v^{\beta} & \quad \mbox{in} ~ \mathbb{R}^{2}, \\
            -\partial_{xx}v + (-\Delta)_{y}^{s_{2}} v + v- v^{2_{s_{2}}^{}-1} = \kappa \beta h(x,y) u^{\alpha}v^{\beta-1} & \quad \mbox{in} ~ \mathbb{R}^{2},\\
            u,v ~ \geq ~ 0 \quad \mbox{in} ~ \mathbb{R}^{2},
    \end{aligned}
    \right.
\end{equation}
where $s_{1},s_{2} \in (0,1)$. The exponent $2_{s_i}^{} = \frac{2(1+s_i)}{1-s_i},(i=1,2)$ plays a vital role in the existence and non-existence of solutions to the system \eqref{main problem} and therefore we call it as a critical exponent. The parameter $\kappa>0$ and $\alpha ,\beta$ are chosen such that 
\begin{align} \label{ alpha beta condition}
    \alpha,\beta>1~~\text{and}~~\alpha+\beta \leq \min \{ 2_{s_{1}}^{},2_{s_{2}}^{}\}.
\end{align}
Throughout this paper we always assume that the function $h$ satisfies the following condition 
 \begin{align} \label{condition on h}
    0 ~ \leq ~ h \in L^{1}(\mathbb{R}^{2})\cap L^{\infty}(\mathbb{R}^{2}), h \text{ is not zero}.
 \end{align}
 To study the motivation and the applications of the mixed Schrodinger operator $\mathcal{L}:= -\partial_{xx} + (-\Delta)_{y}^{s}  + I$ in detail, we refer to the article \cite{Esfahani2018} and the references therein. In 2018, Amin Esfahani et. al. \cite{Esfahani2018} dealt with the following problem 
 \begin{equation} \label{Esfahani problem}
     u + (- \Delta_x)^su-\Delta_y u=f(u), \quad (x,y)\in \R^N \times \R^M, 
 \end{equation}
 where $N,M \geq 1$ and the operator $(- \Delta_x)^s, s\in (0,1)$ denotes the fractional Laplacian in $x$-variable, which is defined (on appropriate functions) in the Cauchy principle value (p.v.) sense up to a normalizing constant $C_{N,s}$  by
 \[(- \Delta_x)^s u(x,y)= \text{p.v.} \int_{\R^N} \frac{u(x,y)-u(z,y)}{|x-z|^{N+2s}}\,\d z. \]
 The function $f \in C^1(\R,\R)$ satisfies the subcritical growth assumptions without Ambrosetti-Rabinowitz type condition. The equation \eqref{Esfahani problem} admits a positive ground state radial solution  (see \cite[Theorem 1.3]{Esfahani2018}). In 2019, Felmer and Wang \cite{Felmer2019} also considered the problem of type \eqref{Esfahani problem} and studied the qualitative properties of positive solutions under different assumptions on $f$.
 Recently, Gou et. al.\cite{Hichem2023} provided a comprehensive study about the existence and non-existence of solutions of the following problem on a plane
\begin{equation}\label{Single problem with p}
   -\partial_{xx}u + (-\Delta)_{y}^{s_{}} u + u  = u^{p -1} \text{  in  } \R^2.
\end{equation}
The authors showed that the problem \eqref{Single problem with p} has a positive ground state solution which is axially symmetric when $2 < p < 2_s$, and by using Pohozaev identity it has only trivial solution for $p \geq 2_s$.
If $\kappa=0$, the system \eqref{main problem} reduces to a single equation of the form
 \begin{equation} \label{single problem}
    -\partial_{xx}u + (-\Delta)_{y}^{s_{}} u + u  = u^{2_s -1} \quad  \text{in}~ \mathbb{R}^2,
\end{equation}
which clearly does not possess any non-trivial solution  by taking $p=2_s$ in \eqref{Single problem with p}. This shows that the system \eqref{main problem} has no semi-trivial solutions, i.e. solutions like $(u,0)$ or $(0,v)$. In this paper, we are interested in the effects of $\kappa$ and $h(x,y)$ on the existence of fully non-trivial solutions (particularly, ground state solutions, whose definition will be given later) for the system \eqref{main problem}, i.e. solutions $(u,v)$ with $u \neq 0$ and $v \neq 0$. Since the equation \eqref{Single problem with p} has non-trivial solutions when $2 < p < 2_s$ (see \cite{Hichem2023}), when $\alpha + \beta < \min \{ 2_{s_{1}}^{},2_{s_{2}}^{}\}$, we expect that a large $\kappa$ and positive $h$ will bring the existence of ground state solutions for \eqref{main problem}. Indeed, as readers will see, our expection is right. Further, this is right when $\alpha + \beta = \min\{2_{s_1}, 2_{s_2}\} < \max\{2_{s_1}, 2_{s_2}\}$. Situations become more complex in the critical case that $s_1 = s_2 = s, \alpha +\beta = 2_s$.
On the one hand, we can prove that a solution exists under a setting that $\kappa$ is large and $h$ is radial and satisfies the following assumption \eqref{H one}:
	\begin{equation}\label{H one}
		0 \leq h \in L^{1}(\R^2) \cap L^{\infty}(\R^2)
		,~h~\text{is not zero, continuous~near}~0~\text{and}~\infty,~\text{and}~h(0,0) = \lim\limits_{|(x,y)| \rightarrow +\infty}h(x,y) = 0. \tag\mathbb{H1}
	\end{equation}
On the other hand, the system \eqref{main problem} has no non-trivial solutions for some $h$, such as $h$ being a positive constant, and more examples of $h$ will be given later. Our study opens a door to understand that the conditions on $\kappa$ and $h$ are crucial for the existence and non-existence of non-trivial solution for \eqref{main problem}.

 To state our main results,
we first define the fractional Sobolev-Liouville space $\mathcal{H}^{1,s_{i}}(\mathbb{R}^{2}),~ i=1,2$ (see \cite{Esfahani2015}). The space $\mathcal{H}^{1,s_{i}}(\mathbb{R}^{2})$ is a Banach space with respect to the norm
\[ \|u\|_{\mathcal{H}^{1,{s_i}}}^2 := \|u\|_{2}^{2} +\|\partial_x u\|_{2}^{2}+\|(-\Delta)_{y}^{s_i /2} u\|_{2}^{2}   ,\text{ for }i=1,2.\]
The product space $\mathbb{D} = \mathcal{H}^{1,s_{1}}(\mathbb{R}^{2}) \times \mathcal{H}^{1,s_{2}}(\mathbb{R}^{2})$ is associated with the following norm
\[ \|(u,v)\|^{2}_{\mathbb{D}} = \|u\|_{\mathcal{H}^{1,{s_1}}}^2 + \|v\|_{\mathcal{H}^{1,{s_2}}}^2.  \]
The energy functional associated with the system \eqref{main problem} is defined by $J_\kappa: \mathbb{D} \rightarrow \mathbb{R}$ and given as follows:
\begin{align} \label{energy functional}
	\begin{split}
		J_{\kappa}(u,v) &= \frac{1}{2}\int_{\mathbb{R}^{2}} \big( |u|^2 +|\partial_{x}u|^2 +  |(-\Delta)_{y}^{s_1 /2} u|^2 \big)\,\mathrm{d}x \mathrm{d}y +\frac{1}{2}\int_{\mathbb{R}^{2}} \big( |v|^2 +|\partial_{x}v|^2 +  |(-\Delta)_{y}^{s_2 /2} v|^2 \big)\,\mathrm{d}x \mathrm{d}y  \\
		&\quad - \frac{1}{2_{s_{1}}}\int_{\mathbb{R}^{2}} |u|^{2_{s_{1}}^{}}\, \mathrm{d}x \mathrm{d}y -\frac{1}{2_{s_{2}}^{}}\int_{\mathbb{R}^{2}} |v|^{2_{s_{2}}^{}}\, \mathrm{d}x \mathrm{d}y  - \kappa \int_{\mathbb{R}^{2}}h(x,y)|u|^{\alpha}|v|^{\beta}\, \mathrm{d}x \mathrm{d}y.
	\end{split}
\end{align}  
The above functional $J_\kappa$ is well-defined and $C^1$ on the product space $\mathbb{D}$. For  $(u_0,v_0) \in \mathbb{D}$, the Fr\'{e}chet derivative of $J_\kappa$ at $(u,v) \in \mathbb{D}$ can be represented as
\begin{align*}
	\langle J'_{\kappa}(u,v) | (u_0,v_0)\rangle &= \int_{\mathbb{R}^{2}}  \big( u  u_0 + \partial_x u  \partial_x u_0 +(-\Delta)_{y}^{s_1/2}u  (-\Delta)_{y}^{s_1/2}u_0  \big)\,\mathrm{d}x\mathrm{d}y \\
	&\qquad + \int_{\mathbb{R}^{2}}  \big( v  v_0 + \partial_x v  \partial_x v_0 +(-\Delta)_{y}^{s_2/2}v  (-\Delta)_{y}^{s_2/2}v_0  \big)\,\mathrm{d}x\mathrm{d}y  \\ 
	& \quad- \int_{\mathbb{R}^{2}} |u|^{{2_{s_{1}}}-2} u  u_0 \,\mathrm{d}x\mathrm{d}y
	- \int_{\mathbb{R}^{2}} |v|^{{2_{s_{2}}}-2} v  v_0 \,\mathrm{d}x\mathrm{d}y\\
	&~~~~~ - \kappa \alpha \int_{\mathbb{R}^{2}}h(x,y)|u|^{\alpha -2}u u_0|v|^{\beta}\,\mathrm{d}x\mathrm{d}y
	- \kappa \beta \int_{\mathbb{R}^{2}}h(x,y)|u|^{\alpha}|v|^{\beta-2}v  v_0 \,\mathrm{d}x\mathrm{d}y,
\end{align*}
where $J'_{\kappa}(u,v)$ is the Fr\'{e}chet derivative of $J_\kappa$ at $(u,v)$, and the duality bracket between the product space $\mathbb{D}$ and its dual $\mathbb{D}^*$ is represented as $\prescript{}{\mathbb{D}^*}{\langle}  \cdot,\cdot \rangle_{\mathbb{D}}$. We just write $\langle \cdot, \cdot \rangle$ if no confusion of notation arises. 
\begin{definition}[\textbf{Ground state solution}] \label{def of gss}
	 A non-trivial critical point $(u_1,v_1) \in \mathbb{D} \backslash \{(0, 0)\}$ of $J_\kappa$ over $\mathbb{D}$ is said to be a ground state solution if its energy is minimal among all the non-trivial critical points i.e.
	\begin{equation} \label{ground state level}
		c_{\kappa} = J_{\kappa}(u_{1},v_{1}) = \min \{ J_{\kappa}(u,v): (u, v) \in \mathbb{D} \setminus \{(0, 0)\} ~\text{and}~ J'_{\kappa}(u,v)=0 \}.
	\end{equation}  
\end{definition} 
\noindent As explained in Section \ref{secpre} below, the functional $J_\kappa$ is not bounded from below on the product space $\mathbb{D}$. For this reason, we introduce the definition of the Nehari manifold $\mathcal{N}_{\kappa}$ associated with the functional $J_\kappa$ as
	\[\mathcal{N}_{\kappa} = \{  (u, v) \in \mathbb{D} \backslash \{(0, 0)\} : \Phi_{\kappa}(u,v) = 0    \},\]
	where 
	\begin{equation} \label{phi function}
		\Phi_{\kappa}(u,v) = \langle J'_{\kappa}(u,v) | (u,v) \rangle.
	\end{equation}
Define $\Bar{c}_\kappa := \inf\limits_{(u,v) \in \mathcal{N}_{\kappa}}J_{\kappa}(u,v)$. Notice that the Nehari manifold $\mathcal{N}_\kappa$ consists of all the non-trivial critical points of the functional $J_\kappa$. Hence, if $\Bar{c}_\kappa$ can be achieved by some $(u,v)$, then $(u,v)$ is a ground state solution.

\begin{proposition} \label{prop1}
We have the following conclusions:
\begin{itemize}
	\item[(1)] $\Bar{c}_\kappa$ is non-increasing w.r.t. $\kappa \geq 0$, and further, $\Bar{c}_\kappa > \Bar{c}_{\kappa'}$ for all $\kappa' > \kappa$ if  $\Bar{c}_\kappa$ can be achieved;
	\item[(2)] $\Bar{c}_\kappa$ is continuous w.r.t. $\kappa \in [0,\infty)$;
	\item[(3)] there exists $\kappa^* \in [0,\infty)$ such that 
	$$
	\Bar{c}_\kappa = \min\bigg\{ \frac{s_1}{1+s_1} \Lambda_1^{\frac{1+s_1}{2 s_1}}, \frac{s_2}{1+s_2} \Lambda_2^{\frac{1+s_2}{2s_2}}  \bigg\}, \forall \kappa \in [0,\kappa^*],
	$$
	$$
	\Bar{c}_\kappa < \min\bigg\{ \frac{s_1}{1+s_1} \Lambda_1^{\frac{1+s_1}{2s_1}}, \frac{s_2}{1+s_2} \Lambda_2^{\frac{1+s_2}{2s_2}}  \bigg\}, \forall \kappa > \kappa^*,
	$$
	where $\Lambda_{i}~ (i = 1,2)$ will be defined by \eqref{Best constant}.
\end{itemize}
\end{proposition}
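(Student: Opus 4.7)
The plan is to exploit the fibering structure: for every $(u,v) \in \mathbb{D} \setminus \{(0,0)\}$, the scalar map $t \mapsto J_\kappa(tu,tv)$ admits a unique positive maximum $t_\kappa(u,v)$ (using the behaviour at $0$ and $\infty$ together with $2 < \alpha+\beta \leq \min\{2_{s_1},2_{s_2}\}$), so that $(t_\kappa u, t_\kappa v) \in \mathcal{N}_\kappa$ and
\[
\bar{c}_\kappa = \inf_{(u,v) \in \mathbb{D} \setminus \{(0,0)\}} \max_{t > 0} J_\kappa(tu,tv).
\]
For (1), the pointwise identity
\[
J_{\kappa'}(tu,tv) - J_\kappa(tu,tv) = -(\kappa'-\kappa)\, t^{\alpha+\beta} \int_{\R^2} h\, |u|^\alpha |v|^\beta\, \dxy \leq 0
\]
for $\kappa'>\kappa$ yields, after taking max in $t$ and inf in $(u,v)$, the monotonicity $\bar{c}_{\kappa'} \leq \bar{c}_\kappa$. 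For the strict version when $\bar{c}_\kappa$ is attained at some $(u_0,v_0) \in \mathcal{N}_\kappa$, I will argue first that $(u_0,v_0)$ cannot be semi-trivial (if $v_0 \equiv 0$ then $u_0$ would solve \eqref{single problem}, which has only the trivial solution, forcing $u_0 \equiv 0$ and contradicting $(u_0,v_0) \in \mathcal{N}_\kappa$), and then that $u_0,v_0 > 0$ by a strong maximum principle applied to each equation of the system, so $\int h\, u_0^\alpha v_0^\beta\, \dxy > 0$ and the displayed identity becomes strict.

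For (2), monotonicity reduces continuity at $\kappa_0$ to matching $\liminf = \limsup$ from each side. For $\kappa_n \uparrow \kappa_0$ I fix $\varepsilon > 0$, pick $(u,v) \in \mathcal{N}_{\kappa_0}$ with $J_{\kappa_0}(u,v) < \bar{c}_{\kappa_0} + \varepsilon$, and use continuous dependence of $t_\kappa(u,v)$ on $\kappa$ (implicit function theorem applied to $\Phi_\kappa(tu,tv) = 0$) to produce $t_n \to 1$ with $(t_n u, t_n v) \in \mathcal{N}_{\kappa_n}$, hence $\bar{c}_{\kappa_n} \leq J_{\kappa_n}(t_n u, t_n v) \to J_{\kappa_0}(u,v)$. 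For $\kappa_n \downarrow \kappa_0$ I take near-minimizers $(u_n,v_n) \in \mathcal{N}_{\kappa_n}$; the standard identity $J_\kappa(u,v) \geq (\tfrac{1}{2} - \tfrac{1}{\alpha+\beta})\|(u,v)\|_\mathbb{D}^2$ on $\mathcal{N}_\kappa$ gives an upper bound on $\|(u_n,v_n)\|_\mathbb{D}$, while the Nehari equality combined with the critical Sobolev embedding and a H\"{o}lder estimate on the coupling term (using $h \in L^1 \cap L^\infty$) yields a uniform positive lower bound. These together imply that the rescaling $s_n$ with $(s_n u_n, s_n v_n) \in \mathcal{N}_{\kappa_0}$ satisfies $s_n \to 1$. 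Since $J_{\kappa_n}(u_n,v_n) = \max_{t > 0} J_{\kappa_n}(tu_n,tv_n) \geq J_{\kappa_n}(s_n u_n, s_n v_n)$, I deduce
\[
\bar{c}_{\kappa_0} \leq J_{\kappa_0}(s_n u_n, s_n v_n) = J_{\kappa_n}(s_n u_n, s_n v_n) + (\kappa_n-\kappa_0)\, s_n^{\alpha+\beta} \int h\, u_n^\alpha v_n^\beta\, \dxy \leq J_{\kappa_n}(u_n, v_n) + o(1),
\]
giving $\bar{c}_{\kappa_0} \leq \liminf \bar{c}_{\kappa_n}$ and closing right-continuity.

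For (3), let $M := \min\{\tfrac{s_1}{1+s_1}\Lambda_1^{(1+s_1)/(2s_1)},\, \tfrac{s_2}{1+s_2}\Lambda_2^{(1+s_2)/(2s_2)}\}$. Testing with semi-trivial $(u,0)$, $(0,v)$ on the single-equation Nehari manifolds, on which the coupling term vanishes and $J_\kappa$ reduces to the single-equation functional with infimum $\tfrac{s_i}{1+s_i}\Lambda_i^{(1+s_i)/(2s_i)}$ by the standard Sobolev-extremizer computation, gives $\bar{c}_\kappa \leq M$ for every $\kappa \geq 0$. To obtain $\bar{c}_0 = M$, I use $J_0(u,v) = \tfrac{s_1}{1+s_1}\|u\|_{2_{s_1}}^{2_{s_1}} + \tfrac{s_2}{1+s_2}\|v\|_{2_{s_2}}^{2_{s_2}}$ on $\mathcal{N}_0$; with $X := \|u\|_{2_{s_1}}^{2_{s_1}}$ and $Y := \|v\|_{2_{s_2}}^{2_{s_2}}$, the Nehari identity combined with $\Lambda_i \|u\|_{2_{s_i}}^2 \leq \|u\|_{\mathcal{H}^{1,s_i}}^2$ gives $X+Y \geq \Lambda_1 X^{2/2_{s_1}} + \Lambda_2 Y^{2/2_{s_2}}$, which forbids the simultaneous strict inequalities $X < \Lambda_1^{(1+s_1)/(2s_1)}$ and $Y < \Lambda_2^{(1+s_2)/(2s_2)}$ (each would make its Sobolev term strictly exceed its kinetic one), so at least one weighted summand in $J_0(u,v)$ is $\geq M$. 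For large $\kappa$, I fix any coupled pair $(u_0,v_0)$ with $\int h\, u_0^\alpha v_0^\beta\, \dxy > 0$ and check that $\max_{t>0} J_\kappa(tu_0,tv_0) \to 0$ as $\kappa \to \infty$, giving $\bar{c}_\kappa < M$ for all sufficiently large $\kappa$. Setting $\kappa^* := \sup\{\kappa \geq 0 : \bar{c}_\kappa = M\}$, the set $\{\kappa : \bar{c}_\kappa = M\}$ is non-empty (contains $0$), closed by (2), downward-closed by (1), and bounded, so it equals $[0,\kappa^*]$ with $\kappa^* \in [0,\infty)$. The main obstacle will be the right-continuity step of (2): producing uniform two-sided bounds on $\|(u_n,v_n)\|_\mathbb{D}$ along near-minimizers on the moving manifolds $\mathcal{N}_{\kappa_n}$ and uniform control of $\int h\, u_n^\alpha v_n^\beta\, \dxy$, which is what legitimizes $s_n \to 1$ and is where the $L^1 \cap L^\infty$ hypothesis on $h$ and the critical Sobolev structure of $\mathbb{D}$ enter.
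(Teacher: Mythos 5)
Your proposal follows essentially the same route as the paper: the Nehari fibering projection $(u,v)\mapsto(t_\kappa u,t_\kappa v)$ together with the max-over-$t$ characterization gives (1) and (2); $\bar{c}_0=M$ is computed exactly as you do (the paper's three-case analysis on $\mathcal{N}_0$ is your $X,Y$ inequality); the decay of $\max_{t>0}J_\kappa(tu_0,tv_0)$ as $\kappa\to\infty$ gives $\bar{c}_\kappa<M$ for large $\kappa$; and $\kappa^*$ is the supremum of $\{\kappa:\bar{c}_\kappa=M\}$, an interval by (1) and closed by (2). Two remarks. First, in (2) you prove both one-sided limits, which is the right thing to do (monotonicity plus one-sided continuity alone does not give continuity); note also that for the $\kappa_n\downarrow\kappa_0$ direction you do not actually need $s_n\to 1$: boundedness of $s_n$ (which follows from the algebraic fiber equation \eqref{Algebraic equation} and the uniform lower bound $\|(u_n,v_n)\|_{\mathbb{D}}\geq r>0$ on the Nehari manifolds for $\kappa_n$ in a bounded set) already makes the error term $(\kappa_n-\kappa_0)\,s_n^{\alpha+\beta}\int_{\R^2} h|u_n|^\alpha|v_n|^\beta\,\dxy$ vanish, so you can drop the more delicate claim. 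Second, the one step that is not available as stated is the strong maximum principle for $-\partial_{xx}+(-\Delta)_y^{s_i}+I$: the paper itself points out that such a principle is not at hand for the mixed operator (this is precisely why it only produces non-negative, not positive, solutions), so you cannot invoke $u_0,v_0>0$ to conclude $\int_{\R^2} h|u_0|^\alpha|v_0|^\beta\,\dxy>0$ in the strict part of (1). This is easily patched: if $\int_{\R^2} h|u_0|^\alpha|v_0|^\beta\,\dxy=0$, then $h|u_0|^\alpha|v_0|^\beta=0$ a.e., hence (since $\alpha,\beta>1$) both coupling densities $h|u_0|^{\alpha-2}u_0|v_0|^\beta$ and $h|u_0|^{\alpha}|v_0|^{\beta-2}v_0$ vanish a.e., the system decouples, and $u_0$ solves the single critical equation \eqref{single problem}, forcing $u_0\equiv 0$ and contradicting the non-semi-triviality you already established. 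With that substitution your argument is complete.
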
	

\begin{remark}
	It is open whether $\kappa^* = 0$ or not.
\end{remark}
We are firstly concerned with the case that $2< \alpha +\beta < \min\{2_{s_{1}}, 2_{s_{2}}\}$ or $\alpha + \beta = \min\{2_{s_1}, 2_{s_2}\} < \max\{2_{s_1}, 2_{s_2}\}$.
\begin{theorem} \label{solution for kappa large}
	Let $2< \alpha +\beta < \min\{2_{s_{1}}, 2_{s_{2}}\}$ or $\alpha + \beta = \min\{2_{s_1}, 2_{s_2}\} < \max\{2_{s_1}, 2_{s_2}\}$. Recall that $\kappa^*$ is given by Proposition \ref{prop1}. Then we have:
	\begin{itemize}
		\item[(1)] for any $\kappa \in (\kappa^*,\infty)$, $\Bar{c}_\kappa$ has a minimizer, which is a non-negative (constant sign) ground state solution to the system \eqref{main problem};
		\item[(2)] for any $\kappa \in [0,\kappa^*)$, $\Bar{c}_\kappa$ has no minimizers.
	\end{itemize}
\end{theorem}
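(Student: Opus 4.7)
The plan is to deduce part (2) immediately from Proposition \ref{prop1} and to prove part (1) by direct minimization on $\mathcal{N}_\kappa$ together with a concentration--compactness analysis, exploiting the strict gap $\Bar{c}_\kappa < \min\{\frac{s_1}{1+s_1}\Lambda_1^{(1+s_1)/(2s_1)}, \frac{s_2}{1+s_2}\Lambda_2^{(1+s_2)/(2s_2)}\}$ furnished by $\kappa > \kappa^*$. For part (2), if $\Bar{c}_\kappa$ were attained for some $\kappa \in [0,\kappa^*)$, Proposition \ref{prop1}(1) applied to any $\kappa' \in (\kappa,\kappa^*]$ would yield $\Bar{c}_\kappa > \Bar{c}_{\kappa'}$, contradicting the constancy of $\Bar{c}$ on $[0,\kappa^*]$ from Proposition \ref{prop1}(3).

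For part (1), I would take a minimizing sequence $(u_n,v_n) \in \mathcal{N}_\kappa$ and replace it by $(|u_n|,|v_n|)$ to assume $u_n,v_n \geq 0$. Using $\Phi_\kappa(u_n,v_n) = 0$, one rewrites
\begin{equation*}
J_\kappa(u_n,v_n) = \Big(\tfrac{1}{2}-\tfrac{1}{2_{s_1}}\Big)\int_{\R^2} u_n^{2_{s_1}} + \Big(\tfrac{1}{2}-\tfrac{1}{2_{s_2}}\Big)\int_{\R^2} v_n^{2_{s_2}} + \kappa\Big(\tfrac{\alpha+\beta}{2}-1\Big)\int_{\R^2} h u_n^\alpha v_n^\beta,
\end{equation*}
a sum of non-negative terms since $\alpha+\beta > 2$ and $2_{s_i} > 2$. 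Combined with the Nehari identity, this gives boundedness of $(u_n,v_n)$ in $\mathbb{D}$ and hence a weak limit $(u_0,v_0)$. A key observation is that the coupling term is compact: because $\alpha/2_{s_1}+\beta/2_{s_2} < 1$ in both subcases of the hypothesis, splitting $\R^2 = B_R \cup B_R^c$, combined with local Sobolev compactness on $B_R$ and the smallness of $\|h\|_{L^p(B_R^c)}$ at infinity for a suitable $p \in (1,\infty)$ (available since $h \in L^1 \cap L^\infty$), yields $\int h u_n^\alpha v_n^\beta \to \int h u_0^\alpha v_0^\beta$.

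The heart of the proof is to show $(u_0,v_0) \neq (0,0)$. If it vanished, the coupling term would vanish in the limit and $(u_n,v_n)$ would become an approximate Nehari sequence for the decoupled critical functional $J_0$; a standard Sobolev argument with the best constants $\Lambda_i$, splitting the limiting Nehari identity according to whether $\|u_n\|_{\mathcal{H}^{1,s_1}}^2 - \int u_n^{2_{s_1}}$ is non-negative or negative, forces $\liminf J_0(u_n,v_n) \geq \min\{\frac{s_i}{1+s_i}\Lambda_i^{(1+s_i)/(2s_i)}\}$, contradicting $\Bar{c}_\kappa <$ this threshold. Given $(u_0,v_0) \neq (0,0)$, Brezis--Lieb applied to the critical integrals and to the Hilbertian norms, together with the coupling-term compactness, provides the splitting $\Phi_\kappa(u_n,v_n) = \Phi_\kappa(u_0,v_0) + \Phi_0(u_n-u_0,v_n-v_0) + o(1)$, and an analogous identity for $J_\kappa$. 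The first summand must satisfy $\Phi_\kappa(u_0,v_0) \leq 0$; if it were strictly negative, the unique rescaling $t \in (0,1)$ that projects $(u_0,v_0)$ onto $\mathcal{N}_\kappa$ would produce energy strictly below $\Bar{c}_\kappa$, a contradiction. Hence $(u_0,v_0) \in \mathcal{N}_\kappa$, and weak lower semicontinuity pins $J_\kappa(u_0,v_0) = \Bar{c}_\kappa$.

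The main obstacle will be controlling the splitting of mass in the Brezis--Lieb step: preventing a coexistence of a non-trivial limit $(u_0,v_0)$ with a \emph{critical bubble} tail $(u_n-u_0,v_n-v_0)$ is exactly what the strict inequality $\Bar{c}_\kappa < \min\{\frac{s_i}{1+s_i}\Lambda_i^{(1+s_i)/(2s_i)}\}$ is designed to rule out. Once this is in place, $\Phi_\kappa(u_0,v_0) = 0$ follows cleanly from the Brezis--Lieb balance and the proof concludes.
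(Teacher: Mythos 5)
Your part (2) is exactly the paper's argument (monotonicity from Proposition \ref{prop1}(1) against constancy on $[0,\kappa^*]$ from Proposition \ref{prop1}(3)), so there is nothing to add there. For part (1) you take a genuinely different route: the paper upgrades a minimizing sequence on $\mathcal{N}_\kappa$ to a bounded (PS) sequence for the free functional (Lemma \ref{equivalent of critical points lemma}) and then invokes Lemma \ref{PS compactness lemma 1}, i.e.\ Lions-type concentration--compactness with Dirac masses at finite points, at the origin and at infinity, to get strong convergence below the threshold; you instead minimize directly and resolve the loss of compactness by a Brezis--Lieb splitting on the Nehari manifold. Your route is viable and avoids the measure-theoretic machinery, but three points must be written out for it to close. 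First, $(|u_n|,|v_n|)$ need not lie on $\mathcal{N}_\kappa$ (only $\Phi_\kappa(|u_n|,|v_n|)\le 0$ holds), so you must reproject by the unique $t_n$ and use \eqref{max} to see the new sequence is still minimizing; the paper performs this symmetrization at the limit instead. Second, and more substantively, the assertion $\Phi_\kappa(u_0,v_0)\le 0$ is not automatic: it is equivalent to $\liminf_n\Phi_0(u_n-u_0,v_n-v_0)\ge 0$, and the remainder $\Phi_0(w_n)=\|w_n\|_{\mathbb{D}}^2-\|u_n-u_0\|_{2_{s_1}}^{2_{s_1}}-\|v_n-v_0\|_{2_{s_2}}^{2_{s_2}}$ can a priori be negative. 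The threshold enters precisely here: if $\lim\Phi_0(w_n)<0$, then with $B_i$ denoting the limits of the critical norms of the remainders, \eqref{Sobolev type 2} gives $\Lambda_1B_1^{2/2_{s_1}}+\Lambda_2B_2^{2/2_{s_2}}<B_1+B_2$, forcing $B_1\ge\Lambda_1^{(1+s_1)/(2s_1)}$ or $B_2\ge\Lambda_2^{(1+s_2)/(2s_2)}$, whence by Brezis--Lieb and \eqref{energy functional on Nehari manifold} one gets $\Bar{c}_\kappa\ge\min\big\{\tfrac{s_1}{1+s_1}\Lambda_1^{(1+s_1)/(2s_1)},\tfrac{s_2}{1+s_2}\Lambda_2^{(1+s_2)/(2s_2)}\big\}$, contradicting Proposition \ref{prop1}(3); once $\Phi_\kappa(u_0,v_0)\le 0$ is known, your reprojection with $t\le 1$ forces $B_1=B_2=0$ and then $\Phi_\kappa(u_0,v_0)=0$ with strong convergence. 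So the dichotomy you gesture at does work, but it is the heart of the proof and should be spelled out. Third, to reach the full statement you still need the two closing observations the paper makes: a minimizer on $\mathcal{N}_\kappa$ is a free critical point because \eqref{second order derivative} makes $\mathcal{N}_\kappa$ a natural constraint, so $\Bar{c}_\kappa=c_\kappa$ and the minimizer is a ground state; and neither component vanishes identically, since otherwise the other would solve the decoupled critical scalar equation, which admits only the trivial solution.
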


Next we focus on the critical case that $s_1 = s_2 = s, \alpha +\beta = 2_s$, which is more complex. 

\begin{theorem} \label{existence}
Suppose $s_1 = s_2 = s, \alpha +\beta = 2_s$ and that $h$ is radial satisfying \eqref{H one}. Then there is $\kappa^{**}$ (which will be given by Proposition \ref{prop2}) such that for any $\kappa \in (\kappa^{**},\infty)$, there exists a non-negative (constant sign) non-trivial solution to the system \eqref{main problem}.   
\end{theorem}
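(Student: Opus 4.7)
The plan is to minimize $J_\kappa$ over the Nehari manifold $\mathcal{N}_\kappa$ and show that a minimizer exists for $\kappa$ sufficiently large by a concentration-compactness argument tailored to the critical coupling. Since $h$ is radial, it is natural to work on a closed symmetric subspace $\mathbb{D}_{\mathrm{sym}} \subset \mathbb{D}$ compatible with the mixed operator (for instance, pairs even in $x$ and in $y$, the basic $\mathbb{Z}_2 \times \mathbb{Z}_2$ symmetries of the problem), invoke the principle of symmetric criticality to lift critical points to all of $\mathbb{D}$, and exploit the improved compactness of sub-critical embeddings on $\mathbb{D}_{\mathrm{sym}}$.

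Starting from a minimizing sequence $(u_n,v_n) \in \mathcal{N}_\kappa \cap \mathbb{D}_{\mathrm{sym}}$ with $J_\kappa(u_n,v_n) \to \bar{c}_\kappa$, the Nehari constraint $\Phi_\kappa(u_n,v_n) = 0$ combined with $\alpha+\beta = 2_s$ gives coercivity of $J_\kappa$ on $\mathcal{N}_\kappa$, so $(u_n,v_n)$ is bounded in $\mathbb{D}$. Pass to a subsequence with $(u_n,v_n) \rightharpoonup (u_1,v_1)$ weakly in $\mathbb{D}$ and a.e.\ on $\R^2$. Brezis--Lieb applied to the three critical contributions, namely the two Sobolev critical powers $|u|^{2_s}$, $|v|^{2_s}$, and the coupling $h|u|^\alpha|v|^\beta$ (critical because $\alpha+\beta = 2_s$), produces the energy splitting
\begin{equation*}
    J_\kappa(u_n,v_n) = J_\kappa(u_1,v_1) + J_\kappa(u_n-u_1,v_n-v_1) + o(1),
\end{equation*}
together with the analogous splitting of $\Phi_\kappa$; the task then reduces to forcing $(w_n,z_n) := (u_n-u_1,v_n-v_1) \to (0,0)$ strongly.

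The main obstacle is ruling out that $(w_n,z_n)$ forms bubbles at the critical exponent. Assumption \eqref{H one} is what makes this tractable: because $h$ is continuous near $0$ and near $\infty$ with $h(0,0) = \lim_{|(x,y)|\to\infty} h(x,y) = 0$, the coupling term $\kappa \int_{\R^2} h|w_n|^\alpha|z_n|^\beta \,\dxy$ tends to zero along any profile concentrating at the origin or escaping to infinity, while the radial symmetry of $h$ combined with the symmetric subspace rules out single-bubble concentration at a non-trivial intermediate point (such a bubble, replicated along its symmetry orbit, would cost too much energy). Every surviving bubble therefore satisfies an uncoupled critical equation as in \eqref{Single problem with p} and carries energy at least $\min\{\tfrac{s}{1+s}\Lambda_1^{(1+s)/(2s)},\tfrac{s}{1+s}\Lambda_2^{(1+s)/(2s)}\}$.

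The threshold $\kappa^{**}$ will be defined in Proposition \ref{prop2}, essentially as the smallest $\kappa$ for which $\bar{c}_\kappa$ lies strictly below the above critical bubble cost; finiteness of $\kappa^{**}$ will follow from a test-function computation exploiting strong coupling to push $\bar{c}_\kappa$ down. For $\kappa > \kappa^{**}$, this strict inequality is incompatible with any nontrivial bubble in the decomposition, so $J_\kappa(w_n,z_n) \to 0$ and $(u_n,v_n) \to (u_1,v_1)$ strongly in $\mathbb{D}$. Consequently $(u_1,v_1) \in \mathcal{N}_\kappa$ realizes $\bar{c}_\kappa$ and solves the system; non-negativity follows by replacing $(u_1,v_1)$ with $(|u_1|,|v_1|)$, which lies in $\mathcal{N}_\kappa$ with the same energy since $h \geq 0$ and every nonlinear term is even.
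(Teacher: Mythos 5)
Your overall strategy coincides with the paper's: minimize $J_\kappa$ on the Nehari manifold restricted to a symmetric subspace, run concentration--compactness, use \eqref{H one} to make the critical coupling term vanish along profiles concentrating at the origin or escaping to infinity, and invoke the threshold of Proposition \ref{prop2} to exclude the remaining bubbles. The genuine gap is in how you exclude concentration at an intermediate point $(x_0,y_0)\neq(0,0)$. You propose the $\mathbb{Z}_2\times\mathbb{Z}_2$-even subspace and argue that such a bubble, replicated along its (four-point) orbit, "would cost too much energy." But the per-bubble energy lower bound comes from the dichotomy "$\nu_j=0$ or $\nu_j\geq\Lambda^{(1+s)/(2s)}$", which is obtained by combining $\Lambda\nu_j^{2/2_s}\leq\mu_j$ with $\mu_j\leq\nu_j$; the inequality $\mu_j\leq\nu_j$ requires the localized coupling $\kappa\alpha\int_{\R^2} h|u_n|^{\alpha}|v_n|^{\beta}\Psi_{j,\epsilon}\,\dxy$ to vanish in the double limit. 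In the subcritical case this follows from H\"older's inequality (Lemma \ref{PS compactness lemma 1}); in the critical case $\alpha+\beta=2_s$ it only follows where $h$ vanishes, i.e.\ at $0$ and $\infty$ under \eqref{H one}. At a point with $h(x_0,y_0)>0$ one only gets $\Lambda\nu_j^{2/2_s}\leq\nu_j+\kappa\alpha\,h(x_0,y_0)\,\nu_j^{\alpha/2_s}\bar{\nu}_j^{\beta/2_s}$, whose right-hand side is of order $\nu_j$ when $\bar{\nu}_j\sim\nu_j$; the resulting lower bound on $\nu_j$ degrades with $\kappa$ and no longer dominates the fixed threshold. Relatedly, your claim that every surviving bubble "satisfies an uncoupled critical equation as in \eqref{Single problem with p}" is false at such points: since the coupling is scale-critical, it persists in the blow-up limit with coefficient $\kappa h(x_0,y_0)$. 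The paper sidesteps all of this by working in the radial space $\mathbb{D}_r$: a radial (PS) sequence cannot have an atom off the origin at all, since it would be reproduced on an entire circle, contradicting the countability of the atomic set (Lemma \ref{critical with h radial}); no energy estimate is needed for intermediate points. This is also why the threshold in Proposition \ref{prop2} is the radial constant $\Lambda_r$ of \eqref{Best constant radial} rather than $\Lambda_1=\Lambda_2$.

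Two smaller points. First, $(|u_1|,|v_1|)$ need not lie on $\mathcal{N}_\kappa$ with the same energy, since the Gagliardo seminorm may strictly decrease under $u\mapsto|u|$; one must rescale by some $t_2\leq 1$ onto the Nehari manifold and use that $(u_1,v_1)$ maximizes $t\mapsto J_\kappa(tu_1,tv_1)$, as in the proof of Theorem \ref{solution for kappa large}. Second, evenness in $x$ and $y$ yields no improved compactness of embeddings, so that motivation should be dropped; if you switch to the radial subspace to close the gap above, note the trade-off that the appeal to symmetric criticality becomes delicate, because the anisotropic operator $-\partial_{xx}+(-\Delta)^{s}_{y}$ is not invariant under rotations of $(x,y)$.
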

\begin{remark}
    It is still open whether the solution exists or not in the critical case for $\kappa>0$ small enough.
\end{remark}
To prove Theorem \ref{existence}, we find solutions in radial space $\mathbb{D}_r$ where
\[ \mathbb{D}_r := \mathcal{H}^{1,s}_r(\R^2) \times \mathcal{H}^{1, s}_r(\R^2) = \{(u,v) \in \mathbb{D} : u ~\text{and}~v~\text{are radially symmetric}\}. \]
We further define
\[\mathcal{N}_{\kappa,r} = \{  (u, v) \in \mathbb{D}_r \backslash \{(0, 0)\} : \Phi_{\kappa}(u,v) = 0    \},\]
where 
\begin{equation}
\Phi_{\kappa}(u,v) = \langle J'_{\kappa}(u,v) | (u,v) \rangle.
\end{equation}
Let $\Bar{c}_{\kappa,r} := \inf\limits_{(u,v) \in \mathcal{N}_{\kappa,r}}J_{\kappa}(u,v)$, and
\begin{equation} \label{Best constant radial}
	\Lambda_{r} := \inf\limits_{u \in \mathcal{H}^{1,s}_r(\R^2)}\frac{\|u\|_{\mathcal{H}^{1,s}(\R^2)}^2}{\|u\|_{2_{s}}^2}.
\end{equation}
 Similar to Proposition \ref{prop1}, we have the following result.

\begin{proposition} \label{prop2}
	Under the assumptions of Theorem \ref{existence}, we have the following conclusions:
	\begin{itemize}
		\item[(1)] $\Bar{c}_{\kappa,r}$ is non-increasing w.r.t. $\kappa \geq 0$, and further, $\Bar{c}_{\kappa,r} > \Bar{c}_{\kappa',r}$ for all $\kappa' > \kappa$ if $\bar{c}_{\kappa,r}$ can be achieved;
		\item[(2)] $\Bar{c}_{\kappa,r}$ is continuous w.r.t. $\kappa \in [0,\infty)$;
		\item[(3)] there exists $\kappa^{**} \in [0,\infty)$ such that 
		$$
		\Bar{c}_{\kappa,r} = \frac{s}{1+s} \Lambda_r^{\frac{1+s}{2s}}, \forall \kappa \in [0,\kappa^{**}],
		$$
		$$
		\Bar{c}_{\kappa,  r} < \frac{s}{1+s} \Lambda_r^{\frac{1+s}{2s}}, \forall \kappa > \kappa^{**}.
		$$
	\end{itemize}
\end{proposition}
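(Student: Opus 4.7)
The plan is to mirror the proof of Proposition~\ref{prop1}, profiting from the fact that in the present setting ($s_1=s_2=s$, $\alpha+\beta=2_s$) every nonlinear term scales with the single exponent $2_s$. Using the Nehari identity $\Phi_{\kappa}(u,v)=0$ to eliminate $\|(u,v)\|_{\mathbb{D}}^{2}$ from the functional yields the key formula
\begin{equation*}
J_\kappa(u,v)=\frac{s}{1+s}\bigl(\|u\|_{2_s}^{2_s}+\|v\|_{2_s}^{2_s}\bigr)+\frac{2_s-2}{2}\,\kappa\int_{\R^{2}}h\,|u|^{\alpha}|v|^{\beta}\,\dxy\qquad\text{for all }(u,v)\in\mathcal{N}_{\kappa,r},
\end{equation*}
so $J_\kappa\ge 0$ on $\mathcal{N}_{\kappa,r}$ and depends monotonically on $\kappa$ along each Nehari fibre; this identity drives all three conclusions.

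For part (1), fix $(u,v)\in\mathcal{N}_{\kappa,r}$ and $\kappa'>\kappa$. The equation $\Phi_{\kappa'}(tu,tv)=0$ rewrites as
\[ t^{2_s-2}=\frac{\|(u,v)\|_{\mathbb{D}}^{2}}{\|u\|_{2_s}^{2_s}+\|v\|_{2_s}^{2_s}+2_s\kappa'\int_{\R^{2}}h\,|u|^{\alpha}|v|^{\beta}\,\dxy}, \]
which admits a unique positive root $t^{\ast}$, and the standard Nehari-fibring argument shows that $t\mapsto J_{\kappa'}(tu,tv)$ is maximised at $t^{\ast}$. Since $h\ge 0$, $J_{\kappa'}(tu,tv)\le J_{\kappa}(tu,tv)$ pointwise in $t$, whence
\[ \Bar{c}_{\kappa',r}\le J_{\kappa'}(t^{\ast}u,t^{\ast}v)=\max_{t>0}J_{\kappa'}(tu,tv)\le\max_{t>0}J_{\kappa}(tu,tv)=J_{\kappa}(u,v), \]
and infimising gives $\Bar{c}_{\kappa',r}\le\Bar{c}_{\kappa,r}$. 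If $\Bar{c}_{\kappa,r}$ is attained at $(u,v)$, then $(u,v)$ is a fully non-trivial critical point of $J_\kappa$ (semi-trivial pairs are ruled out by the Pohozaev argument used for \eqref{single problem}), and a component-wise fractional strong maximum principle gives $u,v>0$, so $\int_{\R^{2}}h\,|u|^{\alpha}|v|^{\beta}\,\dxy>0$ and the middle inequality becomes strict at $t=t^{\ast}$.

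For part (2), since $\Bar{c}_{\cdot,r}$ is non-increasing, continuity amounts to upper semicontinuity from the left together with lower semicontinuity from the right. The former is immediate from the fibring argument of (1): fix $(u,v)\in\mathcal{N}_{\kappa,r}$ with $J_\kappa(u,v)\le\Bar{c}_{\kappa,r}+\varepsilon$ and, for $\kappa_n\uparrow\kappa$, rescale along the fibre to $(t_n u,t_n v)\in\mathcal{N}_{\kappa_n,r}$; the explicit formula for $t_n$ gives $t_n\to 1$, hence $\limsup_n\Bar{c}_{\kappa_n,r}\le J_\kappa(u,v)\le\Bar{c}_{\kappa,r}+\varepsilon$. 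For the latter, choose $(u_n,v_n)\in\mathcal{N}_{\kappa_n,r}$ with $\kappa_n\downarrow\kappa$ and $J_{\kappa_n}(u_n,v_n)\le\Bar{c}_{\kappa_n,r}+1/n$; the opening identity together with H\"older's inequality $\int h\,|u|^{\alpha}|v|^{\beta}\,\dxy\le\|h\|_{\infty}\|u\|_{2_s}^{\alpha}\|v\|_{2_s}^{\beta}$ and the Sobolev bound $\|u\|_{2_s}^{2}\le\Lambda_r^{-1}\|u\|_{\mathcal{H}^{1,s}}^{2}$ confines both $\|u_n\|_{2_s}^{2_s}+\|v_n\|_{2_s}^{2_s}$ and $\|(u_n,v_n)\|_{\mathbb{D}}^{2}$ to a compact subset of $(0,\infty)$; rescaling back to $\mathcal{N}_{\kappa,r}$ produces parameters $\tilde t_n\to 1$, whence $\Bar{c}_{\kappa,r}\le\liminf_n J_\kappa(\tilde t_n u_n,\tilde t_n v_n)=\liminf_n\Bar{c}_{\kappa_n,r}$.

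For part (3), the semi-trivial fibre $(u,0)$ belongs to $\mathcal{N}_{\kappa,r}$ precisely when $\|u\|_{\mathcal{H}^{1,s}}^{2}=\|u\|_{2_s}^{2_s}$, on which $J_\kappa(u,0)=\tfrac{s}{1+s}\|u\|_{\mathcal{H}^{1,s}}^{2}$; optimising via \eqref{Best constant radial} yields $\inf J_\kappa(u,0)=\tfrac{s}{1+s}\Lambda_r^{(1+s)/(2s)}$, so $\Bar{c}_{\kappa,r}\le\tfrac{s}{1+s}\Lambda_r^{(1+s)/(2s)}$ for every $\kappa\ge 0$. Conversely, fix a radial pair $(u_0,v_0)\in\mathbb{D}_r$ with $\int_{\R^{2}}h\,u_0^{\alpha}v_0^{\beta}\,\dxy>0$ (available under \eqref{H one}), let $t_\kappa>0$ satisfy $(t_\kappa u_0,t_\kappa v_0)\in\mathcal{N}_{\kappa,r}$, check $t_\kappa=O(\kappa^{-1/(2_s-2)})$, and conclude from the opening identity that $J_\kappa(t_\kappa u_0,t_\kappa v_0)\to 0$ as $\kappa\to\infty$. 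Hence $\{\kappa\ge 0:\Bar{c}_{\kappa,r}<\tfrac{s}{1+s}\Lambda_r^{(1+s)/(2s)}\}$ is non-empty and, by part (1), equals an interval $(\kappa^{**},\infty)$ with $\kappa^{**}\in[0,\infty)$; continuity from part (2) forces equality at $\kappa=\kappa^{**}$. The most delicate points in the plan are the uniform bound on almost-minimising sequences used in (2) to conclude $\tilde t_n\to 1$, and the strict positivity of $\int h\,|u|^{\alpha}|v|^{\beta}\,\dxy$ at a minimiser in (1); everything else is an algebraic consequence of the opening identity and the fibring structure.
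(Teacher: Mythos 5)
The paper itself gives no separate proof of Proposition \ref{prop2} --- it states that the argument is the same as for Proposition \ref{prop1} --- and your proposal is indeed a faithful adaptation of that argument to the radial, critical setting: the Nehari identity, the fibre rescaling for monotonicity and continuity, and the two-sided comparison with $\frac{s}{1+s}\Lambda_r^{\frac{1+s}{2s}}$ all match the paper's template. Parts (1) and (2) are essentially correct as sketched. (Two small remarks: in (1) the strict inequality needs $\int_{\R^2}h|u|^{\alpha}|v|^{\beta}\,\dxy>0$ at a minimizer, which you propose to get from a strong maximum principle; the paper explicitly remarks that such a maximum principle for the mixed operator is not established, and its own proof of Proposition \ref{prop1}(1) simply asserts the strictness, so you are no worse off but should flag this as an assumption. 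In (2), the final line ``$\liminf_n J_\kappa(\tilde t_n u_n,\tilde t_n v_n)=\liminf_n \Bar{c}_{\kappa_n,r}$'' should be an inequality chain, $\Bar{c}_{\kappa,r}\le J_\kappa(\tilde t_n u_n,\tilde t_n v_n)\le J_{\kappa_n}(u_n,v_n)+o_n(1)$, using that $(\kappa_n-\kappa)$ times the bounded coupling term vanishes.)

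The one genuine gap is in part (3): you never prove the lower bound $\Bar{c}_{0,r}\ge \frac{s}{1+s}\Lambda_r^{\frac{1+s}{2s}}$. Your computation with semi-trivial pairs $(u,0)$ only yields the upper bound $\Bar{c}_{\kappa,r}\le \frac{s}{1+s}\Lambda_r^{\frac{1+s}{2s}}$, since the infimum over $\mathcal{N}_{0,r}$ also ranges over fully non-trivial pairs. Without the lower bound, the set $\{\kappa\ge 0: \Bar{c}_{\kappa,r}<\frac{s}{1+s}\Lambda_r^{\frac{1+s}{2s}}\}$ could a priori be all of $[0,\infty)$, in which case $\kappa^{**}=0$ and the claimed equality $\Bar{c}_{\kappa,r}=\frac{s}{1+s}\Lambda_r^{\frac{1+s}{2s}}$ on $[0,\kappa^{**}]=\{0\}$ would fail; your appeal to ``continuity forces equality at $\kappa^{**}$'' only works when there are parameters to the left of $\kappa^{**}$ at which the value is already known to equal the threshold. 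The missing step is exactly the case analysis the paper carries out for $c_0$ in Proposition \ref{prop1}(3): for $(u,v)\in\mathcal{N}_{0,r}$ one has $\Lambda_r\bigl(\|u\|_{2_s}^2+\|v\|_{2_s}^2\bigr)\le \|(u,v)\|_{\mathbb{D}}^2=\|u\|_{2_s}^{2_s}+\|v\|_{2_s}^{2_s}$, hence $\Lambda_r\|u\|_{2_s}^2\le\|u\|_{2_s}^{2_s}$ or $\Lambda_r\|v\|_{2_s}^2\le\|v\|_{2_s}^{2_s}$, so at least one of $\|u\|_{2_s}^{2_s},\|v\|_{2_s}^{2_s}$ is $\ge\Lambda_r^{\frac{1+s}{2s}}$ and therefore $J_0(u,v)=\frac{s}{1+s}\bigl(\|u\|_{2_s}^{2_s}+\|v\|_{2_s}^{2_s}\bigr)\ge\frac{s}{1+s}\Lambda_r^{\frac{1+s}{2s}}$. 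Inserting this paragraph closes the gap and the rest of your argument then goes through.
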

On the other hand, for some $h$, \eqref{main problem} has no non-trivial solutions.

\begin{theorem}[\textbf{Non-existence of non-trivial solutions}] \label{Non-existence theorem}
 Let $s_1 = s_2 = s, \alpha +\beta = 2_s$. If $h$ and $\kappa$ satisfy either of the following hypothesis:
 \begin{itemize}
     \item[(a)] $h$ is a positive constant and $\kappa$ is any real number,
     \item[(b)] $h \in L^{\infty}(\R^2)\cap C^1(\R^2)$ such that $\kappa xh_x \leq 0$ and $\kappa yh_y \leq 0$,
 \end{itemize}
  then the system \eqref{main problem} has no non-trivial solution $(u,v)$ {which satisfies $xu,yu \in L^2(\R^2)$}.
\end{theorem}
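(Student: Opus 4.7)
The plan is to establish a Pohozaev-type identity adapted to the anisotropic scaling of the mixed operator $-\partial_{xx} + (-\Delta)_y^{s}$, and then read off non-existence directly from the sign hypotheses on $\kappa h_x$ and $\kappa h_y$. First, I would introduce the dilation $u_\lambda(x,y) := \lambda^{(1-s)/(2s)} u(\lambda x, \lambda^{1/s} y)$ and the analogous transformation for $v$. A direct change of variables verifies that in the critical regime $s_1 = s_2 = s$, $\alpha + \beta = 2_s$, this scaling leaves the kinetic norms $\|\partial_x \cdot\|_2^2$, $\|(-\Delta)_y^{s/2} \cdot\|_2^2$ and the critical $L^{2_s}$-norms invariant, while $\|u_\lambda\|_2^2 = \lambda^{-2}\|u\|_2^2$ and
\[
\int_{\R^2} h(x,y) |u_\lambda|^\alpha |v_\lambda|^\beta \,\mathrm{d}x\mathrm{d}y = \int_{\R^2} h(x/\lambda, y/\lambda^{1/s}) |u|^\alpha |v|^\beta \,\mathrm{d}x\mathrm{d}y.
\]

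Next, since $(u,v)$ is a critical point of $J_{\kappa}$, the derivative $\tfrac{d}{d\lambda} J_{\kappa}(u_\lambda,v_\lambda)\big|_{\lambda=1}$ must vanish. Differentiating the explicit $\lambda$-dependence above yields the Pohozaev identity
\[
\|u\|_2^2 + \|v\|_2^2 \;=\; \kappa \int_{\R^2} \Bigl( x h_x(x,y) + \tfrac{y}{s} h_y(x,y) \Bigr) |u|^\alpha |v|^\beta \,\mathrm{d}x \mathrm{d}y.
\]
The integrability hypothesis $xu, yu \in L^2(\R^2)$ (together with the analogous property for $v$, obtained by bootstrapping through the coupling) is exactly what makes the infinitesimal generator $\tfrac{1-s}{2s} u + x \partial_x u + \tfrac{y}{s} \partial_y u$ an admissible element of $\mathcal{H}^{1,s}(\R^2)$, so that the differentiation under the integral and the identification of the scaling derivative with the duality pairing against $J'_\kappa(u,v)$ are both legitimate.

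The conclusion is then immediate from this single identity. In case (a), $h$ is a positive constant so $h_x = h_y \equiv 0$ and the right-hand side vanishes, forcing $\|u\|_2^2 + \|v\|_2^2 = 0$ and hence $(u,v) \equiv (0,0)$. In case (b), the pointwise bounds $\kappa x h_x \leq 0$ and $\kappa y h_y \leq 0$ together with $u, v \geq 0$ make the integrand on the right non-positive, while the left-hand side is non-negative; both sides must therefore vanish and again $(u,v) \equiv (0,0)$. Either situation contradicts the supposed non-triviality.

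The hardest step is the rigorous justification of this Pohozaev identity, because $(-\Delta)_y^{s/2}$ is non-local and does not permit naive integration by parts against $y \partial_y u$. I would handle this either via the Caffarelli--Silvestre extension applied only in the $y$-variable, reducing the fractional piece to a weighted local boundary computation, or via a Fourier-side argument in $y$ exploiting the scaling and translation covariance of $(-\Delta)_y^s$. In either route one truncates and mollifies $(u,v)$, verifies the identity on smooth compactly supported approximants, and then passes to the limit, using the weighted $L^2$ integrability supplied by the hypothesis to dominate the boundary contributions and the differentiation of the $h$-dependent term.
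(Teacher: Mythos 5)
Your argument is correct and reaches exactly the identity the paper uses, namely
\begin{equation*}
\|u\|_2^2+\|v\|_2^2=\frac{\kappa}{s}\int_{\R^2}\bigl(yh_y+s\,xh_x\bigr)|u|^{\alpha}|v|^{\beta}\,\dxy,
\end{equation*}
which is \eqref{m19}, obtained there by subtracting the two Pohozaev relations \eqref{Pohozaev P1} and \eqref{Pohozaev P2}; your treatment of cases (a) and (b) from this identity is the same as the paper's. Where you differ is in how the identity is derived. The paper works with multipliers: it tests the two equations against $y\partial_y u$, $y\partial_y v$, $x\partial_x u$, $x\partial_x v$, $u$ and $v$ separately, handles the nonlocal term through the commutator identity $[(-\Delta)_y^{s},\,y\partial_y]=2s(-\Delta)_y^{s}$, and then combines the six resulting relations with weights $1$ and $s$. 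You instead differentiate $J_\kappa$ along the anisotropic dilation $u_\lambda(x,y)=\lambda^{(1-s)/(2s)}u(\lambda x,\lambda^{1/s}y)$, whose generator is precisely the paper's combined multiplier, so the two computations are formally equivalent; your route is shorter because the scale invariance of the kinetic and critical terms kills most of the bookkeeping at once, whereas the paper's route makes explicit which term each multiplier produces (and in particular isolates the commutator computation for $(-\Delta)_y^{s}$, which your scaling argument encodes implicitly). The rigor issue you flag --- justifying the chain rule $\frac{d}{d\lambda}J_\kappa(u_\lambda,v_\lambda)\big|_{\lambda=1}=\langle J_\kappa'(u,v)\,|\,\frac{d}{d\lambda}(u_\lambda,v_\lambda)\big|_{\lambda=1}\rangle$, i.e.\ that the generator lies in $\mathbb{D}$ --- is the same one the paper faces and dispatches rather briefly via the observation that $xu,yu\in L^2(\R^2)$ yields $x\partial_xu,y\partial_yu\in L^2(\R^2)$ on the Fourier side; your proposed truncation--mollification (or extension) scheme is at least as careful. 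One cosmetic point: in case (b) you do not need $u,v\ge 0$; the integrand $|u|^{\alpha}|v|^{\beta}$ is non-negative regardless, so only the sign hypotheses $\kappa xh_x\le 0$, $\kappa yh_y\le 0$ are used.
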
 
 We organise the rest of our paper as follows. The section \ref{secpre} contains some properties of the Nehari manifold and boundedness of the (PS) sequences. Further, the fractional Laplacian w.r.t $y$ variable is defined and some useful results associated with it are shown. In section \ref{concentration sec}, the concentration-compactness tools for the mixed Schrodinger operator are developed and by using this tool we prove the (PS) compactness condition of the functional $J_\kappa$ in the subcritical case. In section \ref{prop of ck}, some properties of $\Bar{c}_k$ and the proof of Theorem \ref{solution for kappa large} are given. In section \ref{proof sec6}, we give the proof of Theorem \ref{existence}. Finally, we establish the Pohozaev identity in section \ref{pohozaev sec} and using this Pohozaev identity we are able to give the non-existence result (see Theorem \ref{Non-existence theorem}), and conclude this section by giving some specific examples of $h$ for which the non-trivial solution does not exist.

\section{Preliminaries} \label{secpre}

\subsection{The Nehari manifold and (PS) sequences}

For any $\eta>0$ and \eqref{energy functional}, we can write
\begin{align} 
     J_{\kappa}(\eta u, \eta v) &= \frac{\eta^2}{2}\int_{\mathbb{R}^{2}} \big( |u|^2 +|\partial_{x}u|^2 +  |(-\Delta)_{y}^{s_1 /2} u|^2 \big)\,\mathrm{d}x \mathrm{d}y +\frac{\eta^2}{2}\int_{\mathbb{R}^{2}} \big( |v|^2 +|\partial_{x}v|^2 +  |(-\Delta)_{y}^{s_2 /2} v|^2 \big)\,\mathrm{d}x \mathrm{d}y  \\
    &\quad - \frac{\eta^{2_{s_1}}}{2_{s_{1}}}\int_{\mathbb{R}^{2}} |u|^{2_{s_{1}}^{}}\, \mathrm{d}x \mathrm{d}y -\frac{\eta^{2_{s_2}}}{2_{s_{2}}^{}}\int_{\mathbb{R}^{2}} |v|^{2_{s_{2}}^{}}\, \mathrm{d}x \mathrm{d}y  - \kappa \eta^{\alpha+\beta} \int_{\mathbb{R}^{2}}h(x,y)|u|^{\alpha}|v|^{\beta}\, \mathrm{d}x \mathrm{d}y.
\end{align} 
Observe that the functional $J_{\kappa}(\eta u_{},\eta v_{}) \rightarrow -\infty$ as $\eta$ becomes very large. Since the functional $J_\kappa$ is not bounded from below on the product space $\mathbb{D}$, we try to minimize the given functional on the Nehari manifold $\mathcal{N}_{\kappa}$ for finding a critical point in $\mathbb{D}$ using variational approach. 

If we consider  $(u,v) \in \mathcal{N}_{\kappa}$, then the following identity holds
\begin{align} \label{equivalent norm}
\begin{split}
    \|(u,v)\|_{\mathbb{D}}^{2} 
    = \|u\|_{{2_{s_{1}}^{}}}^{2_{s_{1}}^{}} + \|v\|_{{2_{s_{2}}^{}}}^{2_{s_{2}}^{}} + \kappa (\alpha + \beta) \int_{\mathbb{R}^{2}} h(x,y)|u|^{\alpha}|v|^{\beta}\mathrm{d}x \mathrm{d}y.
\end{split}
\end{align}
The restriction of the functional $J_\kappa$ on $\mathcal{N}_{\kappa}$ can be written as
\begin{align} \label{energy functional on Nehari manifold}
    J_{\kappa}|_{\mathcal{N}_{\kappa}}(u,v) = \frac{s_{1}}{1+s_1} \|u\|_{{2_{s_{1}}^{}}}^{2_{s_{1}}^{}} + \frac{s_{2}}{1+s_2} \|v\|_{{2_{s_{2}}^{}}}^{2_{s_{2}}^{}} + \kappa \bigg(  \frac{\alpha+\beta -2}{2}  \bigg) \int_{\mathbb{R}^{2}} h(x,y)|u|^{\alpha}|v|^{\beta}\mathrm{d}x \mathrm{d}y.
\end{align}
Further, if $(\eta u,\eta v) \in \mathcal{N}_{\kappa}$ for all $(u, v) \in \mathbb{D} \backslash \{(0, 0)\}$, using the identity \eqref{equivalent norm} we can obtain an algebraic equation of the form
\begin{align} \label{Algebraic equation}
     \|(u,v)\|_{\mathbb{D}}^{2} =  \eta^{2_{s_{1}} -2} \|u\|_{{2_{s_{1}}^{}}}^{2_{s_{1}}^{}} +  \eta^{2_{s_{2}} -2} \|v\|_{{2_{s_{2}}^{}}}^{2_{s_{2}}^{}} + \kappa (\alpha+\beta ) \eta^{\alpha+ \beta -2} \int_{\mathbb{R}^{2}} h(x,y)|u|^{\alpha}|v|^{\beta}\mathrm{d}x \mathrm{d}y.
\end{align}
By following a careful analysis of the above algebraic equation, it has a unique positive solution. Hence, there is a unique $\eta= \eta_{(u,v)}>0$ such that $(\eta u,\eta v) \in \mathcal{N}_{\kappa}$ for all $(u, v) \in \mathbb{D} \setminus \{(0, 0)\}$. For any $(u,v) \in \mathcal{N}_{\kappa}$, the identity (\ref{equivalent norm}) and the assumption (\ref{ alpha beta condition}) altogether gives
\begin{align*} 
       J''_{\kappa}(u,v)[u,v]^{2} &= \langle \Phi'_{\kappa}(u,v) | (u,v) \rangle  \\
    &= 2 \|(u,v)\|_{\mathbb{D}}^{2}-2_{s_{1}} \|u\|_{2_{s_{1}}^{}}^{2_{s_{1}}^{}} 
    -2_{s_{2}} \|v\|_{2_{s_{2}}^{}}^{2_{s_{2}}^{}} - \kappa (\alpha+\beta)^{2} \int_{\mathbb{R}^{2}}h(x,y)|u|^{\alpha}|v|^{\beta}\,\dxy\\
       & =(2-\alpha-\beta) \|(u,v)\|_{\mathbb{D}}^{2} + (\alpha + \beta -2_{s_{1}} )  \|u\|_{{2_{s_{1}}^{}}}^{2_{s_{1}}^{}}+ (\alpha + \beta -2_{s_{2}})\|v\|_{{2_{s_{2}}^{}}}^{2_{s_{2}}^{}} < 0 . \numberthis \label{second order derivative}
 \end{align*} 
This implies that 
\begin{align} \label{max}
	J_{\kappa}(\eta_{(u,v)}u,\eta_{(u,v)}v) = \max_{\zeta > 0}J_{\kappa}(\zeta u,\zeta v) > J_{\kappa}(\zeta u,\zeta v) \text{ for } 0 < \zeta \neq \eta_{(u,v)}.
\end{align}
Moreover, by the identity \eqref{equivalent norm} there exists a $r_{\kappa} > 0$ such that the product norm is bounded from below i.e.,
\begin{align} \label{ norm equal  r}
    \|(u,v)\|_{\mathbb{D}} > r_{\kappa} ~~\mbox{for~all}~(u,v) \in \mathcal{N}_{\kappa}.
\end{align}
Now using the Lagrange multiplier method, if $(u, v) \in \mathbb{D}$ is a critical point of $J_{\kappa}$ on the Nehari manifold $ \mathcal{N}_{\kappa}$,  then there exists a $\rho \in \mathbb{R}$ called Lagrange multiplier such that
\[ (J_{\kappa}|_{\mathcal{N}_{\kappa} } )'(u,v) =  J_{\kappa}'(u,v) - \rho \Phi'_{\kappa}(u,v) = 0.\]
Further calculations yields $\rho \langle \Phi'_{\kappa}(u,v)|(u,v) \rangle = \langle J'_{\kappa}(u,v)|(u,v) \rangle =0 $. It is clear that $\rho=0$, otherwise the inequality (\ref{second order derivative}) does not hold and subsequently we obtain $J_{\kappa}'(u,v) = 0$. Hence, there is a one-to-one correspondence between the critical points of $J_{\kappa}$ and the critical points of $J_{\kappa}|_{\mathcal{N}_{\kappa}}$. The functional $J_{\kappa}$ restricted on ${\mathcal{N}_{\kappa}}$ can also be written as
\begin{equation} \label{two one four}
        ( J_{\kappa}|_{\mathcal{N}_{\kappa}})(u,v) = \bigg( \frac{1}{2} - \frac{1}{\alpha+\beta}  \bigg) \|(u,v)\|_{\mathbb{D}}^{2} + \(\frac{1}{\alpha+\beta} -\frac{1}{2_{s_{1}}} \) \|u\|_{{2_{s_{1}}}}^{2_{s_{1}}} + \(\frac{1}{\alpha+\beta} -\frac{1}{2_{s_{2}}} \) \|v\|_{{2_{s_{2}}^{}}}^{2_{s_{2}}}.
\end{equation} 
Now the hypotheses (\ref{ alpha beta condition}) and (\ref{ norm equal  r}) combined with \eqref{two one four} yields the following inequality
\[J_{\kappa}(u,v) > \bigg( \frac{1}{2} - \frac{1}{\alpha+\beta}  \bigg) r_{\kappa}^{2} ~~\mbox{for~all}~(u,v) \in \mathcal{N}_{\kappa}.\]
Notice that the functional $J_\kappa$ restricted on $\mathcal{N}_{\kappa}$ is bounded from below. Hence, we can expect the solution of \eqref{main problem} by minimizing the energy functional $J_\kappa$ on the Nehari manifold $\mathcal{N}_\kappa$. Further, such a minimizer on the Nehari manifold $\mathcal{N}_\kappa$ is a ground state solution since $\mathcal{N}_\kappa$ consists of all the non-trivial critical points of the functional $J_\kappa$.

Next we prove that the (PS) sequences on $\mathcal{N}_{\kappa}$ are bounded in the product space $\mathbb{D}$.
\begin{lemma} \label{equivalent of critical points lemma}
	If $\{(u_{n},v_{n})\} \subset \mathcal{N}_{\kappa}$ is a (PS) sequence for $J_{\kappa}$ on ${\mathcal{N}_{\kappa}}$ at level $c \in \mathbb{R}$ under the hypotheses (\ref{ alpha beta condition}) and (\ref{condition on h}), then $\{(u_{n},v_{n})\}$ is a bounded (PS) sequence for $J_{\kappa}$ in $\mathbb{D}$.
\end{lemma}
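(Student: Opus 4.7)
The plan is to prove this in two steps: first boundedness of $\{(u_n,v_n)\}$ in $\mathbb{D}$, then transfer of the (PS) property from the manifold to the full space via a Lagrange multiplier argument.

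For boundedness I would invoke the representation \eqref{two one four}, which rewrites $J_\kappa$ on $\mathcal{N}_\kappa$ as
\[
J_\kappa(u_n,v_n) = \Big(\tfrac{1}{2}-\tfrac{1}{\alpha+\beta}\Big)\|(u_n,v_n)\|_\mathbb{D}^2 + \Big(\tfrac{1}{\alpha+\beta}-\tfrac{1}{2_{s_1}}\Big)\|u_n\|_{2_{s_1}}^{2_{s_1}} + \Big(\tfrac{1}{\alpha+\beta}-\tfrac{1}{2_{s_2}}\Big)\|v_n\|_{2_{s_2}}^{2_{s_2}}.
\]
By \eqref{ alpha beta condition} we have $\alpha+\beta>2$ and $\alpha+\beta\le\min\{2_{s_1},2_{s_2}\}$, so the first coefficient is strictly positive and the other two are non-negative. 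Since $J_\kappa(u_n,v_n)\to c$, this immediately gives $\|(u_n,v_n)\|_\mathbb{D}^2 \le C(1+|c|)$ for some constant independent of $n$.

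Once boundedness is in hand, I would use the standard Lagrange multiplier characterisation of critical points on $\mathcal{N}_\kappa$: the hypothesis $(J_\kappa|_{\mathcal{N}_\kappa})'(u_n,v_n)\to 0$ in the relevant dual, combined with the fact that $\Phi_\kappa'(u_n,v_n)\ne 0$ (guaranteed by \eqref{second order derivative}), yields a sequence of Lagrange multipliers $\rho_n\in\mathbb{R}$ such that
\[
J_\kappa'(u_n,v_n) - \rho_n\,\Phi_\kappa'(u_n,v_n) \longrightarrow 0 \quad \text{in } \mathbb{D}^*.
\]
Evaluating this functional at $(u_n,v_n)$ and using $\langle J_\kappa'(u_n,v_n)\mid(u_n,v_n)\rangle=\Phi_\kappa(u_n,v_n)=0$, I obtain
\[
\rho_n\,\langle\Phi_\kappa'(u_n,v_n)\mid(u_n,v_n)\rangle = o(\|(u_n,v_n)\|_\mathbb{D}) = o(1).
\]

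The main obstacle is to show that $\langle\Phi_\kappa'(u_n,v_n)\mid(u_n,v_n)\rangle$ stays uniformly bounded away from $0$, so that $\rho_n\to 0$. I would combine formula \eqref{second order derivative}, which reads
\[
\langle\Phi_\kappa'(u_n,v_n)\mid(u_n,v_n)\rangle = (2-\alpha-\beta)\|(u_n,v_n)\|_\mathbb{D}^2 + (\alpha+\beta-2_{s_1})\|u_n\|_{2_{s_1}}^{2_{s_1}} + (\alpha+\beta-2_{s_2})\|v_n\|_{2_{s_2}}^{2_{s_2}},
\]
with the uniform lower bound \eqref{ norm equal  r}: since all three coefficients are non-positive and $(2-\alpha-\beta)<0$, one gets
\[
\langle\Phi_\kappa'(u_n,v_n)\mid(u_n,v_n)\rangle \le (2-\alpha-\beta)\,r_\kappa^2 < 0,
\]
a bound independent of $n$. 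This forces $\rho_n\to 0$. Finally, since $\{(u_n,v_n)\}$ is bounded in $\mathbb{D}$, $\Phi_\kappa'(u_n,v_n)$ is bounded in $\mathbb{D}^*$, so $\rho_n\Phi_\kappa'(u_n,v_n)\to 0$ in $\mathbb{D}^*$, and therefore $J_\kappa'(u_n,v_n)\to 0$ in $\mathbb{D}^*$, which is the desired (PS) property on the whole space.
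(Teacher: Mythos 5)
Your proposal is correct and follows essentially the same route as the paper: boundedness via the representation \eqref{two one four} together with $\alpha+\beta>2$, and then the Lagrange multiplier argument where $\langle\Phi_\kappa'(u_n,v_n)\mid(u_n,v_n)\rangle\le(2-\alpha-\beta)r_\kappa^2<0$ forces the multipliers to vanish. Your final remark that boundedness of $\Phi_\kappa'(u_n,v_n)$ in $\mathbb{D}^*$ is needed to pass from $\rho_n\to 0$ to $\rho_n\Phi_\kappa'(u_n,v_n)\to 0$ is a detail the paper leaves implicit, and it is a welcome addition.
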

\begin{proof}
	Given $\{(u_{n},v_{n})\}\subset \mathcal{N}_{\kappa} $ is a (PS) sequence for $J_{\kappa}$ at level $c$, then by definition
	\begin{align}
		\begin{split}
			J(u_{n},v_{n}) \rightarrow c~\text{in }\R~\text{as}~~n \rightarrow \infty ,~\text{i.e.,}~ c+ o_n(1) = J(u_{n},v_{n}). \hspace{1cm}
		\end{split}
	\end{align}
	The functional $J_\kappa$ at $(u_n,v_n)$ is given as
	\begin{align}
		J(u_{n},v_{n}) = \frac{1}{2} \|(u_{n},v_{n})\|_{\mathbb{D}}^{2} - \frac{1}{2_{s_{1}}}\|u_{n}\|_{{2_{s_{1}}}}^{2_{s_{1}}} - \frac{1}{2_{s_{2}}}\|v_{n}\|_{{2_{s_{2}}}}^{2_{s_{2}}} - \kappa \int_{\R^2} h(x,y)|u_{n}|^{\alpha}|v_{n}|^{\beta}\dxy.
	\end{align}
	Using the identity \eqref{equivalent norm}, we have
	\begin{align*}
		J(u_{n},v_{n}) &= \frac{1}{2} \|(u_{n},v_{n})\|_{\mathbb{D}}^{2} - \frac{1}{2_{s_{1}}}\|u_{n}\|_{{2_{s_{1}}}}^{2_{s_{1}}} - \frac{1}{2_{s_{2}}}\|v_{n}\|_{{2_{s_{2}}}}^{2_{s_{2}}}
		-\frac{1}{\alpha+\beta} \bigg( \|(u_{n},v_{n})\|_{\mathbb{D}}^{2}-\|u_{n}\|_{{2_{s_{1}}}}^{2_{s_{1}}} -\|v_{n}\|_{{2_{s_{2}}}}^{2_{s_{2}}} \bigg) \\
		&= \bigg(\frac{1}{2}  -\frac{1}{\alpha+\beta}\bigg) \|(u_{n},v_{n})\|_{\mathbb{D}}^{2}+ \bigg(  \frac{1}{\alpha+\beta}-\frac{1}{2_{s_{1}}}\bigg) \|u_{n}\|_{{2_{s_{1}}}}^{2_{s_{1}}} + \bigg(  \frac{1}{\alpha+\beta}-\frac{1}{2_{s_{2}}}\bigg)\|v_{n}\|_{{2_{s_{2}}}}^{2_{s_{2}}}\\
		c + o_n(1) & \geq \bigg(\frac{1}{2}  -\frac{1}{\alpha+\beta}\bigg) \|(u_{n},v_{n})\|_{\mathbb{D}}^{2}.
	\end{align*}
	Thus, the sequence $\{ (u_{n},v_{n})\}$ is bounded in $\mathbb{D}$. Next, we combine (\ref{phi function}) with (\ref{second order derivative}) and (\ref{ norm equal  r}) to obtain  
	\begin{align} \label{phi r inequality}
		\langle\Phi'_{\kappa}(u_{n},v_{n})|(u_{n},v_{n})\rangle \leq (2-\alpha-\beta) r_{\kappa}^{2}.
	\end{align}
	By the Lagrange multiplier method, we can assume the sequence of multipliers $\{ \omega_{n}\} \subset \mathbb{R}$ such that
	\begin{align} \label{J restricted on N with LM}
		(J_{\kappa}|_{\mathcal{N}_{\kappa} } )'(u_{n},v_{n}) =  J_{\kappa}'(u_{n},v_{n}) - \omega_{n} \Phi'_{\kappa}(u_{n},v_{n}) ~\text{in the dual space}~\mathbb{D}^{*}.
	\end{align}
	We know that $(J_{\kappa}|_{\mathcal{N}_{\kappa} } )'(u_{n},v_{n})$ converges to $0$ as $n \rightarrow \infty$ in the dual space $\mathbb{D}^{*}$, which implies $$\langle (J_{\kappa}|_{\mathcal{N}_{\kappa} } )'  (u_{n},v_{n}) |  (u_{n},v_{n}) \rangle \to 0 \text{ as } n \rightarrow \infty. $$ Since $\langle \Phi'_{\kappa}(u_{n},v_{n}) | (u_{n},v_{n}) \rangle <0$, we have $\omega_{n} \rightarrow 0 ~\text{in}~\mathbb{R}~\text{as}~n \rightarrow \infty$. Hence, the result is proved by \eqref{J restricted on N with LM}.
\end{proof}
In the following, we prove that the (PS) sequence in $\mathbb{D}$ is also bounded in $\mathbb{D}$.
\begin{lemma} \label{boundedness lemma}
	If $\{(u_{n},v_{n})\} \subset \mathbb{D}$ is a (PS) sequence for $J_{\kappa}$ at level $c \in \mathbb{R}$ under the hypotheses (\ref{ alpha beta condition}) and (\ref{condition on h}), then the sequence $\{(u_{n},v_{n})\}$ is bounded in $\mathbb{D}$.
\end{lemma}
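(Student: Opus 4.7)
The plan is to apply the standard Nehari-type linear combination
\[
J_\kappa(u_n,v_n) - \frac{1}{\alpha+\beta}\,\langle J_\kappa'(u_n,v_n),(u_n,v_n)\rangle,
\]
which is essentially the algebraic identity already appearing in the proof of Lemma \ref{equivalent of critical points lemma}. The point of this choice is that $\langle J_\kappa',(u,v)\rangle$ carries a factor of exactly $\alpha+\beta$ in front of the coupling integral $\kappa\int_{\R^2}h(x,y)|u|^\alpha|v|^\beta\,\dxy$, so that term cancels identically in the combination; what remains involves only the quadratic part $\|(u_n,v_n)\|_{\mathbb{D}}^2$ and the two critical powers $\|u_n\|_{2_{s_1}}^{2_{s_1}}$, $\|v_n\|_{2_{s_2}}^{2_{s_2}}$.

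Next I would check signs. The hypothesis $\alpha,\beta>1$ gives $\alpha+\beta>2$, so the coefficient $\tfrac{1}{2}-\tfrac{1}{\alpha+\beta}$ of the quadratic term is strictly positive. The hypothesis $\alpha+\beta\leq\min\{2_{s_1},2_{s_2}\}$ makes the coefficients $\tfrac{1}{\alpha+\beta}-\tfrac{1}{2_{s_i}}$ of the critical terms non-negative (vanishing exactly in the critical endpoint case, which is still allowed). Discarding the non-negative critical contributions, one gets
\[
J_\kappa(u_n,v_n) - \frac{1}{\alpha+\beta}\langle J_\kappa'(u_n,v_n),(u_n,v_n)\rangle \;\geq\; \Big(\tfrac{1}{2}-\tfrac{1}{\alpha+\beta}\Big)\|(u_n,v_n)\|_{\mathbb{D}}^2.
\]

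Finally I would bound the left side using the (PS) hypotheses: $J_\kappa(u_n,v_n)=c+o(1)$, and $|\langle J_\kappa'(u_n,v_n),(u_n,v_n)\rangle|\leq \|J_\kappa'(u_n,v_n)\|_{\mathbb{D}^*}\|(u_n,v_n)\|_{\mathbb{D}}=o(1)\|(u_n,v_n)\|_{\mathbb{D}}$. This yields an inequality of the form $C_1\|(u_n,v_n)\|_{\mathbb{D}}^2\leq C_2+o(1)\|(u_n,v_n)\|_{\mathbb{D}}$ with $C_1>0$, and boundedness follows at once.

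There is no real obstacle here; the single delicate point worth flagging is that the endpoint $\alpha+\beta=\min\{2_{s_1},2_{s_2}\}$ (which \eqref{ alpha beta condition} admits) is harmless, because the matching $L^{2_{s_i}}$ coefficient simply vanishes rather than turning negative, and the argument closes off the strictly positive quadratic coefficient alone. It is also worth emphasising that no sign or integrability property of $h$ beyond \eqref{condition on h} is used, thanks to the exact cancellation of the coupling integral.
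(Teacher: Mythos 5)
Your proposal is correct and is essentially the paper's own argument: the paper likewise forms $J_\kappa(u_n,v_n)-\frac{1}{\alpha+\beta}\langle J_\kappa'(u_n,v_n)\,|\,(u_n,v_n)\rangle$, uses the exact cancellation of the coupling term, keeps only the positive coefficient $\frac12-\frac{1}{\alpha+\beta}$ of $\|(u_n,v_n)\|_{\mathbb{D}}^2$ after discarding the non-negative critical contributions, and closes with the bound $c+o_n(1)+o_n(\|(u_n,v_n)\|_{\mathbb{D}})$. Your remark that the endpoint $\alpha+\beta=\min\{2_{s_1},2_{s_2}\}$ is harmless and that no property of $h$ beyond \eqref{condition on h} is needed is accurate.
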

\begin{proof}
	Given $\{(u_{n},v_{n})\}\subset \mathbb{D}$ is a (PS) sequence for $J_{\kappa}$ at level $c$, then as $n \rightarrow \infty$
	\begin{align}
		J_{\kappa}(u_{n},v_{n}) &\rightarrow c~\text{in}~\mathbb{R} ,\label{e1}\\
		J'_{\kappa}(u_{n},v_{n}) &\rightarrow 0 ~\text{in} ~\mathbb{D}^*. \label{e2}
	\end{align}
	Using \eqref{e2}, we can write
	\begin{align*}
		\bigg< J'_{\kappa}(u_{n},v_{n}) \bigg| \frac{(u_{n},v_{n})}{\|(u_{n},v_{n})\|_{\mathbb{D}}} \bigg> \rightarrow 0 ~\text{in}~\mathbb{R}.
	\end{align*}
	Therefore, we have the following expression
	\begin{align*}
		\|(u_{n},v_{n})\|_{\mathbb{D}}^{2} -\|u_{n}\|_{{2_{s_{1}}}}^{2_{s_{1}}} - \|v_{n}\|_{{2_{s_{2}}}}^{2_{s_{2}}} - \kappa (\alpha + \beta) \int_{\R^2} h(x,y)|u_{n}|^{\alpha}|v_{n}|^{\beta}\dxy = o_n(\|(u_{n},v_{n})\|_{\mathbb{D}})~\text{as}~n \rightarrow \infty,
	\end{align*}
	and using \eqref{e1} we get the following 
	\begin{align*}
		\frac{1}{2} \|(u_{n},v_{n})\|_{\mathbb{D}}^{2} - \frac{1}{2_{s_{1}}}\|u_{n}\|_{{2_{s_{1}}}}^{2_{s_{1}}} - \frac{1}{2_{s_{2}}}\|v_{n}\|_{{2_{s_{2}}}}^{2_{s_{2}}} - \kappa \int_{\R^2} h(x,y)|u_{n}|^{\alpha}|v_{n}|^{\beta}\dxy = c+o_n(1) ~\text{as}~n \rightarrow \infty.
	\end{align*}
	From \eqref{e1} and \eqref{e2}, we can write
	\begin{align*}
		J_{\kappa}(u_{n},v_{n}) - \frac{1}{\alpha+\beta} \langle J'_{\kappa}(u_{n},v_{n}) |  \frac{(u_{n},v_{n})}{\|(u_{n},v_{n})\|_{\mathbb{D}}} \rangle = c + o_n(1)+ o_n(\|(u_{n},v_{n})\|_{\mathbb{D}}) ~\text{as}~n \rightarrow \infty,
	\end{align*}
	which further gives the following inequality
	\begin{align*}
		\bigg(\frac{1}{2}  -\frac{1}{\alpha+\beta} \bigg) \|(u_{n},v_{n})\|_{\mathbb{D}}^{2} &\leq \bigg(\frac{1}{2}  -\frac{1}{\alpha+\beta} \bigg) \|(u_{n},v_{n})\|_{\mathbb{D}}^{2} +\bigg(\frac{1}{\alpha+\beta} -\frac{1}{{2_{s_{1}}}} \bigg) \|u_{n}\|_{{2_{s_{1}}}}^{2_{s_{1}}} \\
		&\qquad + \bigg(\frac{1}{\alpha+\beta} -\frac{1}{{2_{s_{2}}}} \bigg)\|v_{n}\|_{{2_{s_{2}}}}^{2_{s_{2}}}\\
		&= c + o_n(1)+ o_n(\|(u_{n},v_{n})\|_{\mathbb{D}}) ~\text{as}~ n\rightarrow \infty.
	\end{align*}
	Hence, the sequence $\{(u_{n},v_{n})\}$ is bounded in $\mathbb{D}$.
\end{proof}

\subsection{Fractional Laplacian operator}
	
We denote $(-\Delta)_y$ as the classical Laplacian operator w.r.t $y$ variable and its fractional analogue i.e., the fractional Laplacian operator w.r.t $y$ variable denoted by $(-\Delta)^s_y, s\in (0,1)$ is defined on smooth functions as
\begin{align*}
	(-\Delta)^s_y u(x,y) =C(N,s)\  P.V. \int_{\mathbb{R}^N} \frac{u(x,y)-u(x,z)}{|y-z|^{N+2s}} \mathrm{d}z, \text{ for all } (x,y)  \in \mathbb{R}^N \times \mathbb{R}^M,
\end{align*} where $C(N,s):= \big(\int_{\mathbb{R}^N} \frac{1- cos(\zeta_1)}{|\zeta |^{N+2s}}\, \d \zeta \big)^{-1}$ is a normalizing constant and the abbreviation P.V. mean{\color{blue}s} the principal value sense. Since we are dealing our problem \eqref{main problem} in $\mathbb{R}^2$, we choose $N=1$ and $M=1$. So the formal definition takes the following form
\begin{align} \label{defintion fractional laplacian}
	(-\Delta)^s_y u(x,y) =C(s)\ P.V. \int_{\mathbb{R}} \frac{u(x,y)-u(x,z)}{|y-z|^{1+2s}} \mathrm{d}z, \text{ for all } (x,y)  \in \mathbb{R}^2,
\end{align}
where 
\begin{equation}\label{Cs}
	C(s):= \bigg(\int_{\mathbb{R}} \frac{1- cos(\zeta)}{|\zeta |^{1+2s}} \, \d \zeta \bigg)^{-1}.
\end{equation}
Due to the singularity of the kernal, the right-hand side of \eqref{defintion fractional laplacian} is not well defined in general. In fact for $s \in (0, 1/2)$ the integral in \eqref{defintion fractional laplacian} is not really singular near $y$. In the following, we prove the analogous results of Hitchiker's guide \cite{NPV2012}.

\begin{lemma} \label{l3}
	Let $s\in (0,1)$ and let $(- \Delta)_y^s$ be the fractional Laplacian operator defined by \eqref{defintion fractional laplacian}. Then, for any $u \in \mathscr{P}(\mathbb{R}^2)$ (Schwartz space of rapidly decaying functions),
	\begin{equation} \label{second order}
		(- \Delta)_y^s u(x,y) = - \frac{1}{2} C(s) \int_{\mathbb{R}} \frac{u(x,y+z) + u(x, y-z)- 2u(x,y)}{|z|^{1+2s}}\ \mathrm{d}z, \ \forall (x,y) \in \mathbb{R}^2.
	\end{equation}
\end{lemma}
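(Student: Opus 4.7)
The plan is to transform the P.V. integral in \eqref{defintion fractional laplacian} into a symmetric form where the singularity at $z=0$ is tamed by a second-order cancellation, exactly as in the Hitchhiker's guide. First I would do the change of variables $w = z-y$ in \eqref{defintion fractional laplacian}, which is linear and preserves the measure, so that
\[
(-\Delta)_y^s u(x,y) = C(s)\, \text{P.V.} \int_{\mathbb{R}} \frac{u(x,y) - u(x,y+w)}{|w|^{1+2s}}\, \mathrm{d}w.
\]

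Next I would split the real line as $\mathbb{R} = (-\infty,0) \cup (0,\infty)$ and, in the integral over $(-\infty,0)$, substitute $w \mapsto -w$ so that both contributions become integrals over $(0,\infty)$. Adding them gives
\[
(-\Delta)_y^s u(x,y) = -\frac{1}{2} C(s)\, \text{P.V.} \int_{\mathbb{R}} \frac{u(x,y+z)+u(x,y-z)-2u(x,y)}{|z|^{1+2s}}\, \mathrm{d}z,
\]
since the new integrand is even in $z$, so the factor $\tfrac12$ compensates the doubling when passing from $(0,\infty)$ back to $\mathbb{R}$.

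The remaining step, and the main point of the lemma, is to drop the P.V. symbol. For this I would use the Schwartz regularity of $u$: a second-order Taylor expansion in the $y$-variable yields
\[
\bigl| u(x,y+z) + u(x,y-z) - 2u(x,y) \bigr| \leq \|\partial_{yy} u\|_{\infty}\, |z|^2
\]
for $|z|\leq 1$, so the integrand is controlled by $|z|^{1-2s}$ near the origin, which is integrable on $(-1,1)$ for every $s\in(0,1)$. For $|z|\geq 1$, the numerator is uniformly bounded (again by the Schwartz property of $u$), and $|z|^{-1-2s}$ is integrable at infinity. Hence the integrand is absolutely integrable on $\mathbb{R}$, and the principal value coincides with the usual integral.

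The main obstacle, if any, is only bookkeeping: one must check that in passing from the P.V. form centred at $y$ to the symmetric form, the two half-line P.V. limits can be combined into a single (genuine) limit; the second-order Taylor bound above is what guarantees that the symmetric integrand is no longer singular. Once this is in place the identity \eqref{second order} follows directly from the two displays above.
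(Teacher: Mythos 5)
Your proposal is correct and follows essentially the same route as the paper: a symmetrization of the kernel (the paper reflects the whole line via $\tilde t = -t$ and averages the two representations, while you split $\mathbb{R}$ into half-lines and reflect one, which is the same computation), followed by the second-order Taylor bound $|u(x,y+z)+u(x,y-z)-2u(x,y)| \lesssim \|D^2 u\|_{\infty}|z|^2$ to show the symmetrized integrand is absolutely integrable and the P.V. can be dropped. No gaps.
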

\begin{proof}
	By choosing $t= z-y$, we have
	\begin{align} \label{a1}
		(- \Delta)_y^s u(x,y) &= -C(s)\ P.V. \int_{\mathbb{R}} \frac{u(x,z)-u(x,y)}{|y-z|^{1+2s}} \mathrm{d}z = -C(s)\ P.V. \int_{\mathbb{R}} \frac{u(x,y+t)-u(x,y)}{|t|^{1+2s}} \mathrm{d}t.
	\end{align} 
	Now by substituting $\Tilde{t}=-t$ in last term of the above equality, we have
	\begin{align}
		P.V. \int_{\mathbb{R}} \frac{u(x,y+t)-u(x,y)}{|t|^{1+2s}} \mathrm{d}t = P.V. \int_{\mathbb{R}} \frac{u(x,y-\Tilde{t})-u(x,y)}{|\Tilde{t}|^{1+2s}} \mathrm{d}\Tilde{t}
	\end{align}
	So after relabeling $\Tilde{t}$ as $t$
	\begin{align}\label{a2}
		\begin{split}
			2P.V. \int_{\mathbb{R}} \frac{u(x,y+t)-u(x,y)}{|t|^{1+2s}} \mathrm{d}t &= P.V. \int_{\mathbb{R}} \frac{u(x,y+t)-u(x,y)}{|t|^{1+2s}} \mathrm{d}t + P.V. \int_{\mathbb{R}} \frac{u(x,y-t)-u(x,y)}{|t|^{1+2s}} \mathrm{d}t\\
			&= P.V. \int_{\mathbb{R}} \frac{u(x,y+t)+ u(x,y-t)-2u(x,y)}{|t|^{1+2s}} \mathrm{d}t.
		\end{split}
	\end{align}
	Therefore, if we replace $t$ by $z$ in \eqref{a1} and \eqref{a2}, we can write the fractional Laplacian operator in \eqref{defintion fractional laplacian} as
	\[ (- \Delta)_y^s u(x,y) = - \frac{1}{2} C(s)\  P.V.\int_{\mathbb{R}} \frac{u(x,y+z) + u(x, y-z)- 2u(x,y)}{|z|^{1+2s}}\ \mathrm{d}z.\]
	The above representation is useful to remove the singularity at the origin. Indeed, for any smooth function $u$, a second order Taylor expansion yields,
	\[\frac{u(x,y+z) + u(x, y-z)- 2u(x,y)}{|z|^{1+2s}}\leq \frac{\|D^2u\|_{L^\infty}}{|z|^{2s-1}} \]
	which is integrable near $0$ (for any fixed $s \in (0,1)$). Therefore, since $u \in \mathscr{P}(\R^2)$, one can get rid of the $P.V.$ and write \eqref{second order}.
\end{proof}
\begin{remark}
    It can be noticed that the operator $(-\Delta)^s_y$ is indeed well defined for any $u \in C^2(\R^2)\cap L^\infty(\R^2)$. If we take any non-zero constant, it does not belong to the Schwartz space $\mathscr{P}(\R^2)$ but it is fractional harmonic.
\end{remark}
\noindent For any $u \in \mathscr{P}(\mathbb{R}^2)$, the Fourier transform of $u$ is well-defined and it is given by
\[
\mathscr{F}u(\xi) = \hat{u}(\xi) = \frac{1}{2 \pi} \int_{\mathbb{R}^2} e^{-i \xi \cdot x} u(x) dx, \ \text{ for } \xi = (\xi_1, \xi_2) \in \R^2.
\]
\begin{proposition}
	Let $s \in (0,1)$ and let $(-\Delta)^s_y : \mathscr{P}(\R^2) \rightarrow L^2(\mathbb{R}^2)$ be the fractional Laplacian operator defined by \eqref{defintion fractional laplacian}. Then, for any $u \in \mathscr{P}(\R^2)$,
	\begin{equation} \label{laplacian in Fourier term}
		(-\Delta)_y^s u = \mathscr{F}^{-1} \big( |\xi_2|^{2s} (\mathscr{F}u)\big), \ \forall \xi=(\xi_1, \xi_2) \in \mathbb{R}^2.
	\end{equation}
\end{proposition}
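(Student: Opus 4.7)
The plan is to mimic the classical computation of the Fourier symbol of the fractional Laplacian (cf.\ \cite{NPV2012}), adapted to the fact that here $(-\Delta)_y^s$ acts only on the $y$-variable while $x$ is frozen. The first step is to replace the principal-value definition \eqref{defintion fractional laplacian} by the absolutely convergent, symmetrized representation from Lemma \ref{l3}, so that all subsequent manipulations can be justified directly by Fubini's theorem rather than through a delicate P.V.\ analysis. This is legitimate because $u \in \mathscr{P}(\R^2)$ implies $|u(x,y+z)+u(x,y-z)-2u(x,y)| \leq \|\partial_{yy}u\|_\infty\, |z|^2$ near $z=0$, while the numerator is bounded for $|z|$ large; hence the integrand is dominated by $C\min(|z|^{1-2s},|z|^{-1-2s})$, which lies in $L^1(\R_z)$ uniformly in $(x,y)$ for every $s\in(0,1)$.

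Next, I would apply the partial Fourier transform $\mathscr{F}_y$ in the $y$-variable alone. Using linearity, Fubini, and the translation identity $\mathscr{F}_y[u(x,y+z)](\xi_2) = e^{i \xi_2 z}\, \mathscr{F}_y u(x,\xi_2)$, one arrives at
\[
\mathscr{F}_y\bigl[(-\Delta)^s_y u\bigr](x,\xi_2) \;=\; -\tfrac{1}{2}C(s)\int_\R \frac{e^{i\xi_2 z}+e^{-i\xi_2 z}-2}{|z|^{1+2s}}\,\d z \cdot \mathscr{F}_y u(x,\xi_2) \;=\; C(s)\int_{\R} \frac{1-\cos(\xi_2 z)}{|z|^{1+2s}}\,\d z \cdot \mathscr{F}_y u(x,\xi_2).
\]
The remaining scalar integral is then evaluated by the change of variables $\zeta = \xi_2 z$ (assuming $\xi_2 \neq 0$; if $\xi_2 = 0$ both sides vanish). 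A short calculation produces exactly $|\xi_2|^{2s}/C(s)$ by the very definition \eqref{Cs} of the normalizing constant, so that $\mathscr{F}_y[(-\Delta)^s_y u](x,\xi_2) = |\xi_2|^{2s}\, \mathscr{F}_y u(x,\xi_2)$.

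Finally, I would apply the one-dimensional Fourier transform in the $x$-variable, which commutes with the multiplier $|\xi_2|^{2s}$ and with $\mathscr{F}_y$ (both being Fubini-legitimate on Schwartz functions). This yields $\mathscr{F}[(-\Delta)^s_y u](\xi_1,\xi_2) = |\xi_2|^{2s}\, \mathscr{F}u(\xi_1,\xi_2)$, and inverting $\mathscr{F}$ gives the claimed identity \eqref{laplacian in Fourier term}. The main technical point is the scaling evaluation of $\int_\R (1-\cos(\xi_2 z))/|z|^{1+2s}\,\d z$: its integrability at $0$ and $\infty$ is precisely what forces $s\in(0,1)$ and guarantees $0 < C(s) < \infty$. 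Once this step is in place, everything else reduces to bookkeeping about partial Fourier transforms on Schwartz functions.
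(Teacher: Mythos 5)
Your proposal is correct and follows essentially the same route as the paper: pass to the symmetrized second-order quotient of Lemma \ref{l3}, justify the interchange of the $z$-integral with the Fourier transform by an $L^1$ domination of the integrand, use the translation identity to reduce the symbol to $C(s)\int_{\R}(1-\cos(\xi_2 z))|z|^{-1-2s}\,\d z$, and evaluate it by scaling against the definition \eqref{Cs} of $C(s)$. The only cosmetic difference is that you factor the transform as $\mathscr{F}_x\circ\mathscr{F}_y$ while the paper applies the full two-dimensional transform at once; this changes nothing of substance.
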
 \label{P1}
\begin{proof}
	The Lemma \ref{l3} represents the fractional Laplacian w.r.t. $y$ variable as a second order differential quotient given by \eqref{second order}. We denote by $\mathscr{K}u$ the integral in \eqref{second order}, i.e.
	\[\mathscr{K}u(x,y) = - \frac{1}{2} C(s) \int_{\mathbb{R}} \frac{u(x,y+z) + u(x, y-z)- 2u(x,y)}{|z|^{1+2s}}\ \mathrm{d}z.\]
	$\mathscr{K}$ is a linear operator and our goal is to look for a function $A: \mathbb{R} \rightarrow \mathbb{R}$ such that
	\begin{equation}
		\mathscr{K}u = \mathscr{F}^{-1}\big( (A(\mathscr{F}u))\big).
	\end{equation}
	Indeed, we seek for $A$ satisfying 
	\begin{equation} \label{Symbol A}
		A(\xi_2) = |\xi_2|^{2s}.
	\end{equation}
	Further, we have the following estimate
	\begin{align*}
		\frac{|u(x,y+z)+ u(x,y-z)-2u(x,y)|}{|z|^{1+2s}} \leq &4 \big( \chi_{B_1} |z|^{1-2s} \sup_{B_1}|D^2u| \\
		& + \chi_{\mathbb{R} \setminus B_1} |z|^{-1-2s}|u(x,y+z)+ u(x,y-z)-2u(x,y)| \big) \in L^{1}(\mathbb{R}^2).
	\end{align*}
	Hence, by the Fubini-Tonelli theorem we can exchange the integral in $z$ with the Fourier transform in $(x,y)$. Applying Fourier transform we obtain
	\begin{align*}
		A(\xi_2) (\mathscr{F}u)(\xi) &= \mathscr{F}(\mathscr{K}u)\\
		&= - \frac{1}{2} C(s) \int_{\mathbb{R}} \frac{\mathscr{F}(u(x,y+z) + u(x, y-z)- 2u(x,y))}{|z|^{1+2s}}\ \mathrm{d}z\\
		&= - \frac{1}{2} C(s) \int_{\mathbb{R}} \frac{e^{i \xi_2 z} + e^{-i \xi_2 z}-2}{|z|^{1+2s}} \mathrm{d}z (\mathscr{F}u)(\xi)\\
		&= C(s) \int_{\mathbb{R}} \frac{1-cos(\xi_2 z)}{|z|^{1+2s}} \mathrm{d}z (\mathscr{F}u)(\xi).
	\end{align*}
	For claiming \eqref{Symbol A} it is enough to prove that 
	\begin{equation} \label{H e1}
		C(s) \int_{\mathbb{R}} \frac{1-cos(\xi_2 z)}{|z|^{1+2s}} \mathrm{d}z = |\xi_2|^{2s}.
	\end{equation}
	Now, we consider a function $\mathcal{I}: \mathbb{R} \rightarrow \mathbb{R}$ defined by
	\[\mathcal{I}(\xi_2) = \int_{\mathbb{R}} \frac{1-cos(\xi_2 z)}{|z|^{1+2s}} \mathrm{d}z. \]
	We observe that $\mathcal{I}(-\xi_2) = \mathcal{I}(\xi_2)$ and therefore we have
	\begin{align*}
		\mathcal{I}(\xi_2) &= \int_{\mathbb{R}} \frac{1-cos(\xi_2 z)}{|z|^{1+2s}} \mathrm{d}z\\
		&= |\xi_2|^{2s} \int_{\mathbb{R}} \frac{1-cos(t)}{|t|^{1+2s}} \mathrm{d}t = |\xi_2|^{2s} C(s)^{-1},
	\end{align*}
	which proves \eqref{H e1} and hence the proof is complete.
\end{proof}
\begin{proposition}
	Let $s\in (0,1)$. Then for any $u \in H^s(\R^2)$ we have
	\[ 2 C(s)^{-1} \int_{\R^2} |\xi_2|^{2s} |\mathscr{F}u (\xi)|^2 \d \xi= \int_{\R^3} \frac{|u(x,y)-u(x,z)|^2}{|y-z|^{1+2s}} \d z \dxy,\]
	where $C(s)$ is given by \eqref{Cs} and $ H^s(\R^2)$ is the fractional Sobolev space.
\end{proposition}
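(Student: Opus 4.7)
The plan is to reduce to Schwartz functions by density, then decompose the argument into a partial Fourier transform in the $y$ variable, apply the 1D Plancherel identity, invoke the explicit symbol computation from the preceding proposition, and finally apply Plancherel in $x$.

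First I would note that $\mathscr{P}(\mathbb{R}^2)$ is dense in $H^s(\mathbb{R}^2)$ and that both sides of the identity define continuous functionals of $u \in H^s(\mathbb{R}^2)$ (the right side is the squared Gagliardo seminorm of the partial-fractional-derivative type, controlled by the full $H^s$ norm), so it suffices to prove the identity for $u \in \mathscr{P}(\mathbb{R}^2)$. I would then fix $x \in \mathbb{R}$ and treat $u(x,\cdot)$ as a Schwartz function of $y$. Performing the change of variables $w = z - y$ in the inner integral, the double integral becomes
\[
\int_{\mathbb{R}^2} \frac{|u(x,y) - u(x,z)|^2}{|y-z|^{1+2s}}\, \mathrm{d}y\, \mathrm{d}z = \int_{\mathbb{R}} \int_{\mathbb{R}} \frac{|u(x,y+w) - u(x,y)|^2}{|w|^{1+2s}}\, \mathrm{d}y\, \mathrm{d}w.
\]

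Next, for each fixed $w$ I would apply Plancherel in $y$ alone. Denoting the partial Fourier transform in $y$ by $\mathscr{F}_y$, translation by $w$ corresponds to multiplication by $e^{iw\eta}$, so
\[
\int_{\mathbb{R}} |u(x,y+w) - u(x,y)|^2\, \mathrm{d}y = \int_{\mathbb{R}} |e^{iw\eta} - 1|^2\, |\mathscr{F}_y u(x,\eta)|^2\, \mathrm{d}\eta = \int_{\mathbb{R}} 2(1 - \cos(w\eta))\, |\mathscr{F}_y u(x,\eta)|^2\, \mathrm{d}\eta.
\]
Because $u \in \mathscr{P}$ controls everything at infinity and the singularity at $w=0$ is tamed as in Lemma \ref{l3}, Fubini applies and I may exchange the $\mathrm{d}w$ and $\mathrm{d}\eta$ integrals. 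The inner $w$-integral is then
\[
\int_{\mathbb{R}} \frac{2(1 - \cos(w\eta))}{|w|^{1+2s}}\, \mathrm{d}w = 2 C(s)^{-1} |\eta|^{2s},
\]
which is exactly the symbol identity \eqref{H e1} established in the previous proposition.

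Finally, I would integrate the resulting expression in $x$, obtaining
\[
\int_{\mathbb{R}^3} \frac{|u(x,y) - u(x,z)|^2}{|y-z|^{1+2s}}\, \mathrm{d}z\, \mathrm{d}x\, \mathrm{d}y = 2 C(s)^{-1} \int_{\mathbb{R}} \int_{\mathbb{R}} |\eta|^{2s}\, |\mathscr{F}_y u(x,\eta)|^2\, \mathrm{d}\eta\, \mathrm{d}x,
\]
and apply Plancherel in the $x$ variable to $\mathscr{F}_y u(x,\eta)$ (for each fixed $\eta$) to convert $\int_{\mathbb{R}} |\mathscr{F}_y u(x,\eta)|^2\, \mathrm{d}x = \int_{\mathbb{R}} |\mathscr{F}u(\xi_1,\eta)|^2\, \mathrm{d}\xi_1$, with the normalization constant chosen consistently with the paper's Fourier convention. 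Relabelling $\eta$ as $\xi_2$ yields the desired identity. The only step that requires real care is justifying the repeated Fubini exchanges and the density/approximation argument to extend from $\mathscr{P}(\mathbb{R}^2)$ to $H^s(\mathbb{R}^2)$; for Schwartz data all integrands are absolutely integrable once one groups the singular kernel against the second-order finite-difference bound (as in Lemma \ref{l3}), after which the passage to $H^s$ is routine by continuity of both sides in the $H^s$-norm.
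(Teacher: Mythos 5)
Your proposal is correct and follows essentially the same route as the paper: both arguments reduce the Gagliardo-type double integral via Plancherel to $\int |e^{i w\xi_2}-1|^2\,|w|^{-1-2s}\,|\mathscr{F}u|^2$ and then invoke the symbol identity $\int_{\mathbb{R}} \frac{1-\cos(w\eta)}{|w|^{1+2s}}\,\mathrm{d}w = C(s)^{-1}|\eta|^{2s}$; your splitting into a partial Fourier transform in $y$ followed by Plancherel in $x$ is only a cosmetic reorganization of the paper's single $2$D Plancherel step. Your explicit density argument from $\mathscr{P}(\mathbb{R}^2)$ to $H^s(\mathbb{R}^2)$ is a welcome extra point of rigor that the paper glosses over.
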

\begin{proof}
	First we compute
	\begin{align*}
		\mathscr{F} \bigg( \frac{u(x,z+ y) -u(x, y)}{|z|^{\frac{1}{2} +s}}\bigg) &= \frac{1}{2\pi} \frac{1}{|z|^{\frac{1}{2}+s}} \int_{\mathbb{R}^2} e^{-i(\xi_1,\xi_2) \cdot (x,y)}\big(u(x,z+y)-u(x,y) \big) \dxy\\
		&=  \frac{(e^{i \xi_2 z}-1)}{|z|^{\frac{1}{2}+s}} \mathscr{F}u. \numberthis
	\end{align*} 
	Now we calculate
	\begin{align*}
		\int_{\R}\bigg\|\mathscr{F} \( \frac{u(x,z+y)-u(x,y)}{|z|^{\frac{1}{2}+s}} \) \bigg\|_{L^2(\R^2)}^2 \d z &= \int_{\R} \int_{\R^2} \frac{|e^{i \xi_2 z}-1|^2}{|z|^{1+2s}} |\mathscr{F}u (\xi_1, \xi_2)|^2 \d \xi_1 \d \xi_2 \d z\\
		&= 2 \int_{\R} \int_{\R^2} \frac{1-cos(\xi_2 z)}{|z|^{1+2s}} |\mathscr{F}u (\xi)|^2 \d \xi \d z\\
		&= 2 C(s)^{-1} \int_{\R^2} |\xi_2|^{2s} |\mathscr{F}u (\xi)|^2 \d \xi. \numberthis \label{F norm}
	\end{align*}
	On the other hand, by the Plancherel formula we get
	\begin{align*}
		\int_{\R}\bigg\|\mathscr{F} \( \frac{u(x,z+y)-u(x,y)}{|z|^{1/2+s}} \) \bigg\|_{L^2(\R^2)}^2 \d z&= \int_{\R}\bigg\|  \frac{u(x,z+y)-u(x,y)}{|z|^{1/2+s}} \bigg\|_{L^2(\R^2)}^2 \d z\\
		&= \int_{\R} \( \int_{\R^2}  \frac{|u(x,z+y)-u(x,y)|^2}{|z|^{1+2s}} \dxy \)\d z\\
		&= \int_{\R^2} \( \int_{\R}  \frac{|u(x,z+y)-u(x,y)|^2}{|z|^{1+2s}}\d z  \)\dxy \\
		&= \int_{\R^3} \frac{|u(x,y)-u(x,z)|^2}{|y-z|^{1+2s}} \d z \dxy.  \numberthis \label{G norm}
	\end{align*}
	By \eqref{F norm} and \eqref{G norm}, we obtain
	\begin{align*}
		2 C(s)^{-1} \int_{\R^2} |\xi_2|^{2s} |\mathscr{F}u (\xi)|^2 \d \xi= \int_{\R^3} \frac{|u(x,y)-u(x,z)|^2}{|y-z|^{1+2s}} \d z \dxy.
	\end{align*}
\end{proof}
\begin{remark}
	Notice that the above Proposition is important in the sense that it gives us an equivalence relation between the Fourier norm and the Gagliardo type semi-norm. Since our problem is of variational type, it is natural to work with the Gagliardo type semi-norm than Fourier norm. Moreover, we can write (see also \cite[Remark 1.1]{Esfahani2018})
	\begin{equation}
		\|(-\Delta)_y^{s/2}u\|_{L^{2}(\R^2)}^2 = \frac{C(s)}{2} \int_{\R^3} \frac{|u(x,y)-u(x,z)|^2}{|y-z|^{1+2s}} \d z \dxy.
	\end{equation}
\end{remark}
We define the fractional gradient w.r.t $y$ variable for $u \in H^s(\R^2)$ as 
\begin{equation} \label{Fractional Gradient}
    |D^{s}_{y}u(x,y)|^2= \frac{C(s)}{2} \int_{\R} \frac{|u(x,y)-u(x,z)|^2}{|y-z|^{1+2s}} \d z.
\end{equation} 
The next two lemma are due to \cite[Lemma 2.2, Lemma 2.4]{Bonder2018}. The proof follows by similar arguments and therefore we omit the proof.
\begin{lemma} \label{Bonder 1}
	Let $v \in W^{1,\infty}(\R^2)$ be such that $supp(v) \subset B_1(0)$. Then, there exits a constant $C>0$ depending on $s$ and $\|v\|_{1,\infty}$ such that
	\begin{equation}
		|D^s_y v(x,y)| \leq C \min\{ 1, (x^2 + y^2)^{- \frac{1+2s}{2}} \}.
	\end{equation}
\end{lemma}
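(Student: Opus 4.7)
The claim is a two-sided bound on the fractional gradient $|D^s_y v(x,y)|$: a uniform bound and a polynomial decay bound at infinity. The natural plan is to prove each of the two bounds appearing inside the minimum separately and then combine them.

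First I would establish the uniform bound $|D^s_y v(x,y)| \leq C$. Recalling the formula $|D^s_y v(x,y)|^2 = \frac{C(s)}{2} \int_{\R} \frac{|v(x,y)-v(x,z)|^2}{|y-z|^{1+2s}} \, \d z$, I would split the integral in $z$ into the near region $\{|y-z| \leq 1\}$ and the far region $\{|y-z| > 1\}$. On the near region, the Lipschitz control $|v(x,y)-v(x,z)| \leq \|v\|_{W^{1,\infty}} |y-z|$ reduces the integrand to $\|v\|_{W^{1,\infty}}^2 |y-z|^{1-2s}$, which is integrable near $0$ for every $s \in (0,1)$. On the far region the trivial bound $|v(x,y)-v(x,z)| \leq 2 \|v\|_\infty$ yields an integrand $\leq 4\|v\|_\infty^2 |y-z|^{-1-2s}$, integrable at infinity. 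Both contributions are bounded uniformly in $(x,y)$, giving a constant $C=C(s,\|v\|_{W^{1,\infty}})$.

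Next I would establish the decay $|D^s_y v(x,y)| \leq C (x^2+y^2)^{-(1+2s)/2}$ for $(x,y)$ outside, say, $B_2(0)$. The key observation is the support constraint $\mathrm{supp}(v) \subset B_1(0)$. If $|x| \geq 1$, then $v(x,\cdot) \equiv 0$ and the fractional gradient vanishes, so the bound is trivial. Otherwise $|x|<1$, and $(x,y) \notin B_1$ forces $v(x,y)=0$ while $v(x,z) \neq 0$ only when $|z| < \sqrt{1-x^2} \leq 1$. The defining integral therefore simplifies to $\int_{|z|<1} |v(x,z)|^2 |y-z|^{-1-2s} \, \d z$. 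For $|y|$ large, $|y-z| \geq |y|-1 \geq |y|/2$ on this range, so the integral is bounded by $C \|v\|_\infty^2 |y|^{-1-2s}$. Since in this regime $|x| < 1$ forces $|y|$ to be comparable to $\sqrt{x^2+y^2}$, this is equivalent to the claimed decay.

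Finally I would combine the two estimates: for $\sqrt{x^2+y^2}$ bounded (say $\leq 2$) the uniform bound dominates and matches $C \cdot 1$, while for $\sqrt{x^2+y^2}$ large the decay bound takes over; in both regimes one can absorb the switching into a single constant $C$ so that both bounds hold simultaneously, yielding $|D^s_y v(x,y)| \leq C \min\{1, (x^2+y^2)^{-(1+2s)/2}\}$. The only real subtlety is the bookkeeping in the decay step — making sure that when $|x|<1$ and $(x,y)$ is far from the origin, the factor $|y|^{-1-2s}$ is genuinely controlled by $(x^2+y^2)^{-(1+2s)/2}$; this is the place one must be careful, but it follows immediately from the elementary inequality $|y| \geq \tfrac{1}{\sqrt{2}} \sqrt{x^2+y^2}$ valid whenever $|x|<1$ and $\sqrt{x^2+y^2} \geq \sqrt{2}$.
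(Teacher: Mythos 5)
The paper does not actually give a proof of this lemma (it defers to \cite{Bonder2018} with ``the proof follows by similar arguments''), so your argument can only be measured against the standard one — and in structure it is exactly that: near/far splitting of the $z$-integral plus the Lipschitz bound for the uniform estimate, and the support constraint plus $|y-z|\gtrsim |y|$ for the decay. Both of those steps are sound, and the bookkeeping issues you flag ($|y|-1\ge |y|/2$ only for $|y|\ge 2$, the intermediate annulus being absorbed into the constant) are indeed harmless.

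There is, however, a genuine slip at the very end of your decay step. What you actually bound by $C\|v\|_\infty^2\,|y|^{-1-2s}$ is the \emph{defining integral}, i.e.\ (up to the factor $C(s)/2$) the quantity $|D^s_yv(x,y)|^2$, not $|D^s_yv(x,y)|$. Taking square roots, your estimate yields $|D^s_yv(x,y)|\le C\,(x^2+y^2)^{-\frac{1+2s}{4}}$, whereas the displayed inequality asserts the exponent $-\frac{1+2s}{2}$ for $|D^s_yv|$ itself; your closing claim that the two are ``equivalent'' conflates the bound on the square with the bound on the quantity. Moreover, the stronger decay cannot be rescued by a better argument: for $|x|<1$ with $v(x,\cdot)\not\equiv 0$ the integral $\int_{|z|<1}|v(x,z)|^2|y-z|^{-1-2s}\,\mathrm{d}z$ is bounded \emph{below} by $c\,|y|^{-1-2s}$ for large $|y|$, so $|D^s_yv|$ genuinely decays only like $|y|^{-(1+2s)/2}$. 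The statement as printed therefore carries a typo — the left-hand side should be $|D^s_yv(x,y)|^2$, which is also what Remark \ref{Remark w} and Lemma \ref{Bonder 2} actually require (there one needs $w=|D^s_y\phi|^2\le C(x^2+y^2)^{-\alpha/2}$ with $\alpha=1+2s>2s$). Your proof establishes precisely that corrected version; you should state your conclusion for $|D^s_yv|^2$ (or with exponent $-\frac{1+2s}{4}$ for $|D^s_yv|$) rather than asserting the inequality as written.
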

\begin{lemma} \label{Bonder 2}
	Let $0<s<1$ and $w\in L^{\infty}(\R^2)$ be such that there exist $\alpha>0$ and $C>0$ such that \[0 \leq w(x,y) \leq C (x^2 + y^2)^{-\frac{\alpha}{2}}.\]
	If $\alpha> 2s$, then $\H \subset \subset L^2(w \,\dxy;\R^2)$, where $L^2(w \,\dxy;\R^2)$ is the weighted $L^2$-space with weight $w$ and the symbol $``\subset \subset"$ means the compact embedding.
\end{lemma}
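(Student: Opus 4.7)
The plan is to establish compactness of $\H \hookrightarrow L^2(w\,\dxy;\R^2)$ by the standard two-region splitting: local $L^2$-compactness on balls combined with a tail estimate that exploits the decay of $w$. Given a bounded sequence $\{u_n\}\subset\H$, reflexivity furnishes a subsequence (not relabeled) with $u_n\rightharpoonup u$ weakly in $\H$ and, via the local compactness in the next step, $u_n\to u$ almost everywhere in $\R^2$.

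For the interior estimate I would invoke a Rellich--Kondrachov-type compactness for the anisotropic space: for each $R>0$, the restriction $\H \to L^2(B_R)$ is compact. This is obtained by decoupling the norm into an $H^1$-direction (in $x$) and an $H^s$-direction (in $y$) and combining the classical Rellich compactness in $x$ with the fractional one in $y$, in the spirit of the arguments in \cite{NPV2012}; the equivalence between the Fourier and Gagliardo seminorms proved above makes this routine. Since $w\in L^\infty(\R^2)$, this yields
\begin{equation*}
\int_{B_R} w|u_n-u|^2\,\dxy \le \|w\|_\infty \|u_n-u\|_{L^2(B_R)}^2 \to 0 \quad \text{as } n\to\infty.
\end{equation*}

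For the tail I plan to interpolate the weight decay against a Hardy-type inequality
\begin{equation*}
\int_{\R^2}\frac{|u|^2}{(x^2+y^2)^{s}}\,\dxy \le C_H \|u\|_\H^2,
\end{equation*}
which I would prove by Fourier methods using \eqref{laplacian in Fourier term} together with the elementary bound $(x^2+y^2)^s \ge |y|^{2s}$ and the 1D fractional Hardy inequality applied slice-wise in $y$ (and, if needed for $s\ge 1/2$, by borrowing integrability from the $H^1_x$ component). Writing the weight as $w = w^{2s/\alpha}\cdot w^{1-2s/\alpha} \le C^{2s/\alpha}(x^2+y^2)^{-s}\cdot w^{1-2s/\alpha}$ and using $\|w\|_{L^\infty(B_R^c)}\le CR^{-\alpha}$, one obtains
\begin{equation*}
\int_{B_R^c} w|u_n-u|^2\,\dxy \le CR^{-(\alpha-2s)}\int_{\R^2}\frac{|u_n-u|^2}{(x^2+y^2)^s}\,\dxy \le C' R^{-(\alpha-2s)}\|u_n-u\|_\H^2,
\end{equation*}
which goes to zero as $R\to\infty$ uniformly in $n$ precisely because $\alpha>2s$ and $\{u_n\}$ is bounded in $\H$. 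Letting first $R\to\infty$ and then $n\to\infty$ finishes the proof.

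The main obstacle is the anisotropic Hardy inequality above, since the classical 1D fractional Hardy applies cleanly only for $s<1/2$; for $s\ge 1/2$ one has to interpolate with the $H^1_x$ half of the norm or work directly in Fourier with the symbol $|\xi_2|^{2s}$. A cheaper alternative that sidesteps Hardy altogether is the crude bound $\int_{B_R^c} w|u_n-u|^2 \le \|w\|_{L^\infty(B_R^c)}\|u_n-u\|_2^2 \le CR^{-\alpha}$, which in fact gives compactness for any $\alpha>0$; the role of the sharper threshold $\alpha>2s$ is to arise naturally from the interpolation argument mirroring the one used in \cite{Bonder2018}, to which the statement is explicitly traced.
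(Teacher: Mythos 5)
The paper offers no proof of this lemma: it is stated to follow ``by similar arguments'' to \cite[Lemma 2.4]{Bonder2018} and is omitted, so the comparison is between your argument and the cited one rather than anything in the text. Your proposal does contain a complete and correct proof, but it is the ``cheaper alternative'' of your last paragraph, not the route you lead with. The Hardy-interpolation route is not complete as written: the slice-wise one-dimensional fractional Hardy inequality fails for $s \ge 1/2$ unless the function vanishes appropriately (for $s>1/2$, $H^s(\R)$ functions are continuous while $|y|^{-2s}$ is not locally integrable), and the promised repair via the $H^1_x$ component is never carried out. Moreover the interpolation buys nothing here, since on $B_R^c$ one has $(x^2+y^2)^{-s}\le R^{-2s}$, so your interpolated tail bound collapses back to the crude one. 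The crude bound \emph{is} the proof: because $\|\cdot\|_{\mathcal{H}^{1,s}}$ controls $\|\cdot\|_{L^2}$, a bounded sequence satisfies $\int_{B_R^c} w\,|u_n-u|^2\,\dxy \le C R^{-\alpha}\sup_n\|u_n-u\|_2^2 \to 0$ uniformly in $n$ as $R\to\infty$, and combining this with the compact local embedding $\H \hookrightarrow H^{s}(\R^2)\subset\subset L^2(B_R)$ (which is cleanest to justify via Plancherel and the elementary bound $1+|\xi_1|^2+|\xi_2|^{2s}\ge c\,(1+|\xi|^2)^{s}$, rather than the informal ``decoupling'' you sketch) together with a diagonal extraction over $R\in\mathbb{N}$ finishes the argument. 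Your closing observation is correct and worth keeping: for the inhomogeneous space $\H$ the conclusion holds for any $\alpha>0$; the threshold $\alpha>2s$ is an artifact of the H\"older-against-the-critical-exponent argument used in the homogeneous setting of \cite{Bonder2018} and is simply a sufficient hypothesis here, amply satisfied by the weight $w=|D^s_y\phi|^2$ of Remark \ref{Remark w}.
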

\begin{remark} \label{Remark w}
	If $\phi \in W^{1,\infty}(\R^2)$ has compact support, then $w = |D^s_y \phi|^2$ verifies all the assumptions of the above lemma.
\end{remark}

\section{Concentration- Compactness}\label{concentration sec}

In this section, we recall the following result by Lions \cite[Lemma 1.2]{Lionslimitcase}, which we will use further to prove the concentration-compactness results.
\begin{lemma} \label{Lions lemma}
   Let $\nu,\mu$ be two non-negative, bounded measures on $\mathbb{R}^{N}$ such that
   \[ \bigg[ \int_{\mathbb{R}^N} |\phi|^q\, \mathrm{d}\nu \bigg]^{\frac{1}{q}} \leq C \bigg[ \int_{\mathbb{R}^N} |\phi|^p\, \mathrm{d}\mu \bigg]^{\frac{1}{p}},~~ \forall ~\phi \in C_{c}^{\infty}(\mathbb{R}^N), \]
   for some constant $C>0$ and $1\leq p<q<\infty$. Then there exist a countable set $\{x_k \in \mathbb{R}^N : k \in \mathbb{K}\}$ and $\nu_k \in (0, \infty)$ such that 
   \[ \nu = \sum_{k \in \mathbb{K}} \nu_k \delta_{x_k}. \]
\end{lemma}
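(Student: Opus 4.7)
The plan is to show that $\nu$ is purely atomic by upgrading the given functional inequality to a set inequality, invoking the Radon--Nikodym theorem, and then applying a differentiation theorem for Radon measures to force the resulting density to vanish away from the atomic part of $\mu$.

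\emph{Step 1: from test functions to Borel sets.} For any bounded open $U \subset \mathbb{R}^N$ I would approximate $\chi_U$ monotonically from below by a sequence $\{\phi_n\} \subset C_c^\infty(\mathbb{R}^N)$ built from mollified cutoffs of relatively compact subsets of $U$. Monotone convergence on both sides of the hypothesis gives $\nu(U)^{1/q} \leq C\,\mu(U)^{1/p}$, and outer regularity of the finite Radon measures $\mu,\nu$ extends this to
\[
\nu(A)^{1/q} \leq C\,\mu(A)^{1/p}, \quad \text{for every Borel set } A \subset \mathbb{R}^N.
\]
In particular $\mu(A)=0$ implies $\nu(A)=0$, so $\nu \ll \mu$; the Radon--Nikodym theorem then furnishes $f \geq 0$, $f \in L^1(\mu)$, with $\nu = f\mu$.

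\emph{Step 2: differentiation.} Specializing $A = B_r(x)$ yields $\nu(B_r(x)) \leq C^q \mu(B_r(x))^{q/p}$. By the Besicovitch--Lebesgue differentiation theorem for Radon measures on $\mathbb{R}^N$, for $\mu$-a.e. $x$,
\[
f(x) = \lim_{r \to 0^+} \frac{\nu(B_r(x))}{\mu(B_r(x))} \leq C^q \lim_{r \to 0^+} \mu(B_r(x))^{q/p - 1}.
\]
Because $q > p$ makes $q/p - 1 > 0$, the last limit vanishes whenever $\mu(\{x\}) = 0$. Therefore $f \equiv 0$ $\mu$-a.e. on $\mathbb{R}^N \setminus D$, where $D := \{x \in \mathbb{R}^N : \mu(\{x\}) > 0\}$ denotes the atomic set of $\mu$.

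\emph{Conclusion and main obstacle.} Since $\mu$ is finite, $D$ is at most countable; enumerating $D = \{x_k : k \in \mathbb{K}\}$ and setting $\nu_k := \nu(\{x_k\})$, and discarding indices with $\nu_k = 0$, produces the claimed representation $\nu = \sum_{k \in \mathbb{K}} \nu_k \delta_{x_k}$ with $\nu_k \in (0,\infty)$. The main technical hurdle is Step 2: because $\mu$ is not assumed to be doubling, the classical Lebesgue differentiation theorem is insufficient, and one must invoke the Besicovitch covering theorem to guarantee the pointwise identification of $f$ as the symmetric derivative $d\nu/d\mu$. I emphasize that the reverse-H\"older gap $q > p$ is precisely what drives the conclusion: it is this strict exponent gap that converts the smallness of $\mu(B_r(x))$ at a non-atom $x$ into the vanishing of $f(x)$, forcing $\nu$ to concentrate on the countable set of atoms rather than on any lower-dimensional continuum.
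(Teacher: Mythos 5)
Your proof is correct. The paper does not actually prove this lemma --- it is recalled verbatim from Lions (\emph{The concentration-compactness principle in the calculus of variations, the limit case}, Lemma I.2) --- and your argument (upgrading the reverse H\"older inequality to Borel sets, deducing $\nu \ll \mu$, and using Besicovitch differentiation together with the exponent gap $q>p$ to kill the density off the atoms of $\mu$) is essentially the classical proof of that cited result.
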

\begin{lemma}[\cite{Brezis_Lieb_1983}, Brezis-Lieb Theorem 1] \label{Brezis Lieb lemma}
    Let $r \geq 1$ and $\{u_n\}_{n \in \mathbb{N}} \subset L^{r}(\R^N)$ be a bounded sequence such that $u_n \rightarrow u$ a.e. in $\R^N$, then 
    \[\|u_n\|^r_{L^{r}(\R^N)} - \|u_n -u\|^r_{L^{r}(\R^N)} = \|u\|^r_{L^{r}(\R^N)} + o_n(1). \]
\end{lemma}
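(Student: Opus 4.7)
The plan is to prove the Brezis--Lieb lemma via the standard pointwise inequality together with the dominated convergence theorem. The main obstacle is that we cannot apply dominated convergence directly to $|u_n|^r - |u_n - u|^r - |u|^r$ because there is no obvious $L^1$-dominant; the classical trick is to split off a small, uncontrolled piece by introducing an $\varepsilon$-parameter and handling the rest with a truncation argument.

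First I would establish the elementary pointwise estimate: for every $\varepsilon > 0$ there exists a constant $C_\varepsilon > 0$ such that
\[
\big||a+b|^r - |a|^r\big| \;\leq\; \varepsilon\, |a|^r + C_\varepsilon\, |b|^r \qquad \text{for all } a,b \in \mathbb{R}.
\]
This follows from the convexity of $t \mapsto |t|^r$ for $r \geq 1$ together with Young's inequality, treating the cases $|b| \leq \delta |a|$ and $|b| \geq \delta|a|$ separately for some small $\delta$ depending on $\varepsilon$.

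Next I would set $a = u_n(x) - u(x)$ and $b = u(x)$ in the inequality, obtaining pointwise a.e.
\[
\big| |u_n|^r - |u_n - u|^r - |u|^r \big| \;\leq\; \varepsilon\, |u_n - u|^r + (C_\varepsilon+1)\, |u|^r.
\]
Define the nonnegative function
\[
W_{n,\varepsilon} := \max\!\Big\{\, \big||u_n|^r - |u_n - u|^r - |u|^r\big| - \varepsilon |u_n - u|^r,\, 0\,\Big\}.
\]
Then $0 \leq W_{n,\varepsilon} \leq (C_\varepsilon + 1)|u|^r$. Since $u_n \to u$ a.e., the quantity $|u_n|^r - |u_n-u|^r - |u|^r$ converges to $0$ a.e., and hence $W_{n,\varepsilon} \to 0$ a.e. By Fatou's lemma applied to the bounded sequence $\{u_n\}$ we have $u \in L^r(\mathbb{R}^N)$, so $(C_\varepsilon+1)|u|^r \in L^1(\mathbb{R}^N)$; the dominated convergence theorem then yields $\int_{\mathbb{R}^N} W_{n,\varepsilon}\,dx \to 0$ as $n \to \infty$.

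Finally, from the definition of $W_{n,\varepsilon}$,
\[
\int_{\mathbb{R}^N} \big||u_n|^r - |u_n - u|^r - |u|^r\big|\,dx \;\leq\; \int_{\mathbb{R}^N} W_{n,\varepsilon}\,dx + \varepsilon \int_{\mathbb{R}^N} |u_n - u|^r\,dx.
\]
Since $\{u_n\}$ is bounded in $L^r(\mathbb{R}^N)$, so is $\{u_n - u\}$, hence there is $M > 0$ with $\int |u_n-u|^r \leq M$ for all $n$. Taking $\limsup_{n \to \infty}$ gives $\limsup_{n} \int \big||u_n|^r - |u_n-u|^r - |u|^r\big|\,dx \leq \varepsilon M$, and letting $\varepsilon \downarrow 0$ produces the conclusion $\|u_n\|_{L^r}^r - \|u_n - u\|_{L^r}^r - \|u\|_{L^r}^r = o_n(1)$, which is exactly the claimed identity.
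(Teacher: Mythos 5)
Your argument is correct and is precisely the classical proof of Brezis--Lieb: the $\varepsilon$-weighted pointwise inequality $||a+b|^r-|a|^r|\leq \varepsilon|a|^r+C_\varepsilon|b|^r$, the truncated remainder $W_{n,\varepsilon}$ dominated by $(C_\varepsilon+1)|u|^r$, dominated convergence, and the final $\limsup$ in $n$ followed by $\varepsilon\downarrow 0$. The paper itself gives no proof of this lemma --- it only cites Theorem~1 of Br\'ezis--Lieb (1983) --- and your write-up faithfully reproduces the argument of that reference, including the necessary observation via Fatou that $u\in L^r(\mathbb{R}^N)$, so there is nothing to add.
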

\begin{lemma}\cite[Theorem 1.1]{Esfahani2015}
    Let $s \in (0,1)$ and $2 \leq q \leq 2_s=\frac{2(1+s)}{1-s}$. For any $u \in \mathcal{H}^{1,s}(\mathbb{R}^{2})$, there exists a constant $C_{q,s}>0$ such that
    \begin{equation} \label{inequality 1}
        \int_{\R^2}|u|^q \dxy \leq C_{q,s}  \(\int_{\R^2}|u|^2 \dxy\)^{\frac{q}{2}- \frac{(q-2)(s+1)}{4s}} \(\int_{\R^2}|\partial_x u|^2 \dxy\)^{\frac{q-2}{4}} \(\int_{\R^2}|(-\Delta)_y^{s/2}u|^2 \dxy\)^{\frac{q-2}{4s}}.
    \end{equation}
\end{lemma}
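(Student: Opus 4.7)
The plan is to derive the inequality from two ingredients: the critical anisotropic Sobolev embedding $\mathcal{H}^{1,s}(\R^2)\hookrightarrow L^{2_s}(\R^2)$, together with log-convexity of $L^p$-norms used to interpolate between $L^2$ and $L^{2_s}$ for subcritical $q$. As a sanity check and to pin down the exponents, I would first verify scaling: under the anisotropic dilation $u(x,y)\mapsto u(\lambda x,\mu y)$, the four integrals in the target inequality transform by factors $(\lambda\mu)^{-1}$, $(\lambda\mu)^{-1}$, $\lambda\mu^{-1}$, and $\lambda^{-1}\mu^{2s-1}$ respectively; matching powers of $\lambda$ and $\mu$ on both sides yields a $2\times 2$ linear system whose unique solution is exactly $a_1 = q/2-(q-2)(s+1)/(4s)$, $a_2 = (q-2)/4$, $a_3 = (q-2)/(4s)$, so the exponents are fully forced.

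I would then establish the critical endpoint $q=2_s$ in the sharp form
\[
\|u\|_{L^{2_s}(\R^2)}\,\lesssim\,\|\partial_x u\|_{L^2(\R^2)}^{s/(1+s)}\,\|(-\Delta)_y^{s/2} u\|_{L^2(\R^2)}^{1/(1+s)}.
\]
Using the Fourier description of $(-\Delta)_y^{s/2}$ from the preceding proposition, this is an anisotropic Sobolev embedding for the homogeneous norm $\int(|\xi_1|^2+|\xi_2|^{2s})|\hat u|^2\,d\xi$. One can prove it either by a Caffarelli--Silvestre $s$-harmonic extension in the $y$-variable, extending $u$ to $U(x,y,t)$ on $\R^2\times\R_+$ satisfying $\partial_x^2 U+\mathrm{div}_{(y,t)}(t^{1-2s}\nabla_{(y,t)}U)=0$ whose weighted Dirichlet energy recovers $\|(-\Delta)^{s/2}_y u\|_{L^2}^2$ up to a constant, and applying a weighted Sobolev--trace inequality in $\R^3$; or alternatively by an anisotropic Littlewood--Paley decomposition indexed by the dyadic scale $(|\xi_1|,|\xi_2|^s)$ combined with a Bernstein inequality on each slab.

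For $2\le q<2_s$, log-convexity of $L^p$-norms gives $\|u\|_{L^q}\le\|u\|_{L^2}^{1-\theta}\|u\|_{L^{2_s}}^{\theta}$ with $\theta=(q-2)(1+s)/(2qs)$, derived from $1/q=(1-\theta)/2+\theta/2_s$. Substituting the endpoint estimate and raising to the $q$-th power, a direct exponent check yields $(1-\theta)q=2a_1$, $\theta q\cdot s/(1+s)=2a_2$, and $\theta q/(1+s)=2a_3$, precisely matching the exponents in the statement. The main obstacle is the critical endpoint: the mismatch between a full derivative in $x$ and an order-$s$ fractional derivative in $y$ precludes any direct appeal to the isotropic Sobolev embedding and requires either careful weighted trace analysis in the extension or a precise anisotropic Littlewood--Paley argument. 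Once that single estimate is secured, the scaling verification and the interpolation step reduce to bookkeeping.
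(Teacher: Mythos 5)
The paper does not actually prove this lemma: it is imported verbatim from \cite[Theorem 1.1]{Esfahani2015}, so there is no in-paper argument to compare against. Judged on its own terms, your reduction is sound in its bookkeeping: the scaling factors of the four integrals are computed correctly, the interpolation exponent $\theta=(q-2)(1+s)/(2qs)$ is right, and the exponent identities $(1-\theta)q=2a_1$, $\theta qs/(1+s)=2a_2$, $\theta q/(1+s)=2a_3$ all check out, so the subcritical range $2\le q<2_s$ does reduce cleanly to the multiplicative endpoint estimate $\|u\|_{2_s}\lesssim\|\partial_x u\|_2^{s/(1+s)}\|(-\Delta)_y^{s/2}u\|_2^{1/(1+s)}$ (which is indeed the $q=2_s$ case of \eqref{inequality 1}, since there $a_1=0$). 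One small slip: the anisotropic dilation $u\mapsto u(\lambda x,\mu y)$ gives only two equations for the three unknowns $a_1,a_2,a_3$, so the exponents are not forced by a $2\times 2$ system alone; you need the third equation $a_1+a_2+a_3=q/2$ coming from amplitude homogeneity $u\mapsto cu$.

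The genuine gap is that the endpoint inequality, which carries the entire content of the lemma, is asserted rather than proved. You offer two candidate routes (a Caffarelli--Silvestre extension in the $y$-variable followed by a weighted trace inequality on $\R^2\times\R_+$, or an anisotropic Littlewood--Paley decomposition with Bernstein estimates), but neither is carried out, and each is itself a nontrivial piece of analysis --- in the extension route, for instance, you would still have to prove a mixed weighted trace--Sobolev inequality in which the $x$-derivative of the extension is not controlled by the degenerate Dirichlet energy in $(y,t)$. Note also a circularity you must avoid: the critical embedding \eqref{Sobolev type 2} that appears in the paper is itself \emph{derived from} \eqref{inequality 1} via Young's inequality, so you cannot invoke it; you must produce an independent proof of the endpoint, which is exactly the step left open. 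For reference, proofs of such anisotropic Gagliardo--Nirenberg inequalities in the literature typically avoid the endpoint-plus-interpolation route altogether and instead combine one-dimensional (fractional) Gagliardo--Nirenberg inequalities in each variable separately via Minkowski's integral inequality, or argue directly on the Fourier side; either of these would give all $2\le q\le 2_s$ at once without isolating the critical case.
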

Note that all the exponents on the right hand side of \eqref{inequality 1} are positive for $2<q<2_s$. Raising \eqref{inequality 1} to power $\frac{2}{q}$, we obtain that
\begin{equation*} 
        \(\int_{\R^2}|u|^q \dxy\)^{2/q} \leq C_{q,s}  \(\int_{\R^2}|u|^2 \dxy\)^{1- \frac{2(q-2)(s+1)}{4sq}} \(\int_{\R^2}|\partial_x u|^2 \dxy\)^{\frac{2(q-2)}{4q}} \(\int_{\R^2}|(-\Delta)_y^{s/2}u|^2 \dxy\)^{\frac{2(q-2)}{4sq}}.
\end{equation*}
As a consequence, for all $q: 2<q<2_s$, we can apply the Young's inequality to obtain the following
\begin{equation} \label{Sobolev type embedding}
     \(\int_{\R^2}|u|^q \dxy\)^{2/q} \leq C \int_{\R^2} \big(|u|^2 + |\partial_x u|^2 + |(-\Delta)^{s/2}_yu|^2 \big) \dxy.
\end{equation}
Note that \eqref{Sobolev type embedding} is equivalent to the Sobolev type embedding $\mathcal{H}^{1,s}(\R^2) \hookrightarrow L^q(\R^2)$, which holds under the assumption $s>0$ and $2<q\leq 2_s$ (see also \cite[Remark 2.2]{Esfahani2015}). Let us define
\begin{equation} \label{Best constant}
    \Lambda_{i} := \inf\limits_{u \in \Hi \setminus \{0\}}\frac{\ui}{\u2si}, \ i=1,2.
\end{equation}
By definition it follows that
\begin{equation} \label{Sobolev type 2}
    \Lambda_{i} \|u\|_{2_{s_i}}^2 \leq \|u\|_{\mathcal{H}^{1,s_i}(\R^2)}^2, \ i=1,2,
\end{equation}
where $\Lambda_i >0 ~(i=1,2)$ by embedding \eqref{Sobolev type embedding}.
For the sake of simplicity, let us first deal with the case $s_i=s$ and $\Lambda_i= \Lambda$ for $i=1,2$. It is given that for $u \in \mathcal{H}^{1,s}(\R^2)$,
\[\Dely u(x,y) = C(s)\ P.V. \int_{\R} \frac{u(x,y)-u(x,z)}{|y-z|^{1+s}}\ \d z.\]
Observe that $\|D^{s}_{y}u\|^{2}_{L^{2}(\R^2)} = \|\Dely u\|^{2}_{L^{2}(\R^2)}= \frac{C(s)}{2} \int_{\R^3} \frac{|u(x,y)-u(x,z)|^2}{|y-z|^{1+2s}} \d z \dxy$. Let $\phi \in \C_c$, then by critical Sobolev type embedding \eqref{Sobolev type 2}, we obtain
\begin{equation}
    \Lambda^{1/2} \|\phi u_k\|_{2_s} \leq \|\phi u_k\|_{\H},
\end{equation}
where the sequence $\{u_k\}$ converges weakly to $u$ in $\H$. First we assume that $u=0$, then
\begin{equation*}
    \|\phi u_k\|_{2_s} = \(\int_{\R^2}|\phi u_k|^{2_s}\, \dxy \)^{\frac{1}{2_s}} = \(\int_{\R^2} |\phi|^{2_s} |u_k|^{2_s} \,\dxy \)^{\frac{1}{2_s}}.
\end{equation*}
 Let $\nu_k :=|u_k|^{2_s} \,\dxy \overset{\ast}{\rightharpoonup} \nu$ and $ \mu_k :=\big(|u_k|^2 + |\partial_x u_k|^2 + |D^s_y u_k|^2 \big) \,\dxy \overset{\ast}{\rightharpoonup} \mu$ in the sense of measure. Then
\begin{equation} \label{convergence nu}
    \|\phi u_k\|_{2_s} = \(\int_{\R^2} |\phi|^{2_s} |u_k|^{2_s}\, \dxy \)^{\frac{1}{2_s}} \rightarrow \(\int_{\R^2} |\phi|^{2_s}\, \d \nu \)^{\frac{1}{2_s}} \text{ as } k \rightarrow \infty.
\end{equation}
Now
\begin{align*}
    \|\phi u_k\|_{\H}^2 &= \int_{\R^2} \big( |\phi u_k|^2 + |\partial_x(\phi u_k)|^2+ |D_y^s(\phi u_k)|^2 \big)\, \dxy\\
    &=\underbrace{\int_{\R^2} |\phi u_k|^2\, \dxy}_\text{I}  + \underbrace{\int_{\R^2} |\partial_x(\phi u_k)|^2 \,\dxy}_\text{II} + \underbrace{\int_{\R^2} |D_y^s(\phi u_k)|^2\, \dxy}_\text{III} .
\end{align*}
\begin{align*}
    \text{III} &= \int_{\R^2} |D^s_y (\phi u_k)|^2\, \dxy =  \int_{\R^2} |\Dely (\phi u_k)|^2\, \dxy\\
    &= \frac{C(s)}{2} \int_{\R^3} \frac{|(\phi u_k)(x,y)-(\phi u_k)(x,z)|^2}{|y-z|^{1+2s}}\, \d z \dxy\\
    &= \frac{C(s)}{2} \int_{\R^3} \frac{|\phi(x,y+h) u_k(x,y+h)- \phi(x,y) u_k(x,y)|^2}{|h|^{1+2s}}\, \d h \dxy\\
    &= \frac{C(s)}{2} \int_{\R^3}\frac{|\phi(x,y+h) u_k(x,y+h)- \phi(x,y)u_k(x,y+h) + \phi(x,y)u_k(x,y+h) - \phi(x,y) u_k(x,y)|^2}{|h|^{1+2s}}\, \d h \dxy\\
    &= \frac{C(s)}{2} \int_{\R^3}\frac{|\big(\phi(x,y+h) - \phi(x,y)\big)u_k(x,y+h) + \phi(x,y) \big(u_k(x,y+h) - u_k(x,y) \big)|^2}{|h|^{1+2s}}\, \d h \dxy.
\end{align*}
Now the application of Minkowski's inequality gives the following estimate
\begin{align*}
    \text{III} &\leq \frac{C(s)}{2} \bigg[ \(\int_{\R^3}\frac{|\phi(x,y+h) - \phi(x,y)|^2|u_k(x,y+h)|^2}{|h|^{1+2s}}\, \d h \dxy\)^{1/2} \\
    &\qquad + \(\int_{\R^3}\frac{|u_k(x,y+h) - u_k(x,y)|^2|\phi(x,y)|^2}{|h|^{1+2s}}\, \d h \dxy\)^{1/2}    \bigg]^2\\
    &= \bigg[ \(\frac{C(s)}{2}\int_{\R^3}\frac{|\phi(x,y+h) - \phi(x,y)|^2|u_k(x,y+h)|^2}{|h|^{1+2s}}\, \d h \dxy\)^{1/2} \\
    &\qquad + \(\frac{C(s)}{2}\int_{\R^3}\frac{|u_k(x,y+h) - u_k(x,y)|^2|\phi(x,y)|^2}{|h|^{1+2s}}\, \d h \dxy\)^{1/2}    \bigg]^2\\
    &= \bigg[ \(\int_{\R^2} |\phi(x,y)|^2 |D^s_y u_k(x,y)|^2\, \dxy\)^{1/2} \\
    &\qquad + \( \underbrace{\frac{C(s)}{2} \int_{\R^3}\frac{|\phi(x,y+h) - \phi(x,y)|^2|u_k(x,y+h)|^2}{|h|^{1+2s}}\, \d h \dxy}_\text{IV}\)^{1/2} \bigg]^2. \numberthis \label{eq1}
\end{align*}
\begin{align*}
    \text{IV} &=\frac{C(s)}{2} \int_{\R^3}\frac{|\phi(x,y+h) - \phi(x,y)|^2|u_k(x,y+h)|^2}{|h|^{1+2s}}\, \d h \dxy\\
    &= \frac{C(s)}{2}\int_{\R^3}\frac{|\phi(x,t) - \phi(x,t-h)|^2|u_k(x,t)|^2}{|h|^{1+2s}}\, \d h \d x \d t\\
    &=\frac{C(s)}{2} \int_{\R^3}\frac{|\phi(x,t) - \phi(x,t+\hat{h})|^2|u_k(x,t)|^2}{|\hat{h}|^{1+2s}}\, \d \hat{h} \d x \d t\\
    &=\frac{C(s)}{2} \int_{\R^3}\frac{|\phi(x,y) - \phi(x,y+h)|^2|u_k(x,y)|^2}{|h|^{1+2s}}\, \d h \d x \d y\\
    &= \int_{\R^2} |u_k(x,y)|^2 |D_y^s \phi (x,y)|^2 \d x \d y. \numberthis \label{eq2}
\end{align*}
From \eqref{eq1} and \eqref{eq2}, we deduce that
\begin{align}
    \text{III}^{1/2} \leq  \(\int_{\R^2} |\phi(x,y)|^2 |D^s_y u_k(x,y)|^2\, \dxy\)^{1/2} + \(\int_{\R^2} |u_k(x,y)|^2 |D_y^s \phi (x,y)|^2 \d x \d y\)^{1/2}.
\end{align}
Using Remark \ref{Remark w}, the weight $w(x,y):= |D^s_y \phi(x,y)|^2$ verifies all the hypothesis of Lemma \ref{Bonder 2}. Therefore, the sequence $\{u_k\}$ converges strongly to $u=0$ in $L^{2}(w\, \dxy; \R^2)$. Hence, we obtain
\begin{equation}
    \lim\limits_{k \rightarrow \infty} \text{III}^{1/2} \leq \lim\limits_{k \rightarrow \infty} \(\int_{\R^2} |\phi(x,y)|^2 |D^s_y u_k(x,y)|^2\, \dxy\)^{1/2}.
\end{equation}
Further,
\begin{equation}
    I = \int_{\R^2} |\phi|^2 |u_k|^2\, \dxy \rightarrow \int_{\R^2} |\phi|^2 |u|^2\, \dxy \text{ as } k \rightarrow \infty
\end{equation}
and 
\begin{align*}
    \text{II}^{1/2} &= \(\int_{\R^2} |\partial_x(\phi u_k)|^2\,  \dxy\)^{1/2} = \(\int_{\R^2} |\phi \partial_x(u_k) + u_k \partial_x(\phi)|^2\,  \dxy\)^{1/2}\\
    &= \|\phi \partial_x(u_k) + u_k \partial_x(\phi)\|_{L^2(\R^2)}\\
    & \leq \|\phi \partial_x(u_k)\|_{L^2(\R^2)} + \|u_k \partial_x(\phi)\|_{L^2(\R^2)}  \text{ (by Minkowski's inequality)}.
\end{align*}
The second term in the last inequality tends to $0$ as $k \rightarrow \infty$ by using a similar set of arguments as in Lemma \ref{Bonder 2}. Subsequently, we have
\begin{equation}
    \lim\limits_{k \rightarrow \infty} \text{II}^{1/2} \leq \lim\limits_{k \rightarrow \infty} \|\phi \partial_x(u_k)\|_{L^2(\R^2)}.
\end{equation}
Thus we have
\begin{align*}
    \Lambda \lim\limits_{k \rightarrow \infty} \|\phi u_k\|_{2_s}^2 \leq \lim\limits_{k \rightarrow \infty} \( \int_{\R^2} |\phi|^2 \big(|u_k|^2 + |\partial_x u_k|^2 + |D^s_y u_k|^2 \big)\, \dxy \).
\end{align*}
Further,
\begin{align*}
   & \Lambda \lim\limits_{k \rightarrow \infty} \(\int_{\R^2}|\phi|^{2_s} |u_k|^{2_s} \dxy\)^{2/2_{s}} \leq \lim\limits_{k \rightarrow \infty} \( \int_{\R^2} |\phi|^2 \big(|u_k|^2 + |\partial_x u_k|^2 + |D^s_y u_k|^2 \big) \,\dxy \)\\
   & \Lambda \lim\limits_{k \rightarrow \infty} \(\int_{\R^2}|\phi|^{2_s}\,  \d \nu_k\)^{2/2_{s}} \leq \lim\limits_{k \rightarrow \infty} \( \int_{\R^2} |\phi|^2\, \d \mu_k \)\\
    & \Lambda \(\int_{\R^2}|\phi|^{2_s}\,  \d \nu\)^{2/2_{s}} \leq  \( \int_{\R^2} |\phi|^2\, \d \mu \).
\end{align*}
\noindent Now by Lemma \ref{Lions lemma}, there exists a countable set $J$, points $\{(x_j,y_j)\}_{j \in J} \subset \R^2$ and positive weights $\{\nu_j\}, \{\mu_j\} \subset \R$ such that 
\begin{align*}
    \nu= \sum_{j\in J} \nu_j \delta_{(x_j,y_j)} + \nu_0 \delta_{(0,0)},  \text{ and } \mu \geq \sum_{j\in J} \mu_j \delta_{(x_j,y_j)} + \mu_0 \delta_{(0,0)},
\end{align*}
where $\delta_{(x_j,y_j)}$ and $\delta_{(0,0)}$ are Dirac functions centred at points $(x_j,y_j)$ and $(0,0)$ respectively.\\
 Now if $u \neq 0$, we replace $u_k$ by $v_k=u_k-u$. Since $u_k \rightharpoonup u$ weakly in $\H$, the sequence $v_k \rightharpoonup 0$ in $\H$. Let $\phi \in C_{b}^{0}(\R^2)$ (the set of all continuous bounded functions), then by Brezis-Lieb lemma \ref{Brezis Lieb lemma} we have
\begin{align*}
   \int_{\R^2} |u_n|^{2_s} \phi\, \dxy  = \int_{\R^2} |u|^{2_s} \phi\, \dxy  + \int_{\R^2} |u_n -u|^{2_s} \phi\, \dxy+ o_n(1).
\end{align*}
Consequently we obtain
\begin{align} \label{concentration 2s}
      |u_n|^{2_s} \overset{\ast}{\rightharpoonup} |u|^{2_s} + \nu = |u|^{2_s} +\sum_{j\in J} \nu_j \delta_{(x_j,y_j)} + \nu_0 \delta_{(0,0)}.
\end{align}
Applying the Brezis Lieb lemma again we have
\begin{align*}
    &\int_{\R^2} \big(|u_n|^2+ |\partial_x (u_n)|^2 + |D^s_y u_n|^2 \big) \phi \, \dxy - \int_{\R^2} \big(|u_n-u|^2+ |\partial_x (u_n-u)|^2 + |D^s_y (u_n-u)|^2 \big) \phi \, \dxy\\
    & = \int_{\R^2} \big(|u|^2+ |\partial_x (u)|^2 + |D^s_y u|^2 \big) \phi \, \dxy + o_n(1),
\end{align*}
which further gives
\begin{align*} \label{concentration e1}
    |u_n|^2+ |\partial_x (u_n)|^2 + |D^s_y u_n|^2 & \overset{\ast}{\rightharpoonup} |u|^2+ |\partial_x (u)|^2 + |D^s_y u|^2 + \mu \\
    & \geq |u|^2+ |\partial_x (u)|^2 + |D^s_y u|^2 + \sum_{j\in J} \mu_j \delta_{(x_j,y_j)} + \mu_0 \delta_{(0,0)}. \numberthis 
\end{align*}
Moreover, we also have
\begin{equation} \label{concentration e2}
    \mu_j \geq \Lambda \nu_j^{2/2_s}, \ \forall j \in J.
\end{equation}

\begin{lemma} \label{PS compactness lemma 1}
    Suppose $\alpha + \beta < \min\{2_{s_1}, 2_{s_2}\}$ or $\alpha + \beta = \min\{2_{s_1}, 2_{s_2}\} < \max\{2_{s_1}, 2_{s_2}\}$ and $h$ satisfies \eqref{condition on h}. Then the functional $J_\kappa$ satisfies the (PS)- condition at level $c$ satisfying 
    \begin{equation} \label{three zero}
        c < \min\bigg\{ \frac{s_1}{1+s_1} \Lambda_1^{\frac{1+s_1}{2s_1}}, \frac{s_2}{1+s_2} \Lambda_2^{\frac{1+s_2}{2s_2}}  \bigg\}.
    \end{equation}
\end{lemma}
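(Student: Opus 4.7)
The plan is the standard Lions concentration-compactness argument adapted to the mixed-operator product setting. Let $\{(u_n,v_n)\}$ be a (PS) sequence at level $c$. By Lemma \ref{boundedness lemma} the sequence is bounded in $\mathbb{D}$, so up to a subsequence $(u_n,v_n) \rightharpoonup (u,v)$ in $\mathbb{D}$, with $u_n \to u$ and $v_n \to v$ a.e. in $\mathbb{R}^2$ and strongly in $L^p_{\text{loc}}$ for subcritical $p$. A routine passage to the limit in $J'_\kappa(u_n,v_n) \to 0$ (using a.e. convergence together with Vitali/dominated convergence for the subcritical coupling) shows that $(u,v)$ is a weak solution, hence $J'_\kappa(u,v) = 0$.

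Next I apply the concentration-compactness machinery developed earlier in this section to $|u_n|^{2_{s_1}}$ and $|v_n|^{2_{s_2}}$ separately. This yields countable index sets $J_i$ and non-negative weights such that, in the sense of measures,
\begin{equation*}
	|u_n|^{2_{s_1}} \overset{\ast}{\rightharpoonup} |u|^{2_{s_1}} + \sum_{j \in J_1} \nu^1_j \delta_{(x_j,y_j)} + \nu^1_0 \delta_{(0,0)},
\end{equation*}
with an analogous expansion for $v_n$, and the corresponding ``energy measure'' dominating a similar sum of Diracs with weights $\mu^i_j$ satisfying the critical inequality $\mu^i_j \geq \Lambda_i (\nu^i_j)^{2/2_{s_i}}$.

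The crux is to rule out every atomic piece. Fix a concentration point $(x_j,y_j)$ for $u_n$ and a smooth cutoff $\phi_{\varepsilon,j}$ with $\phi_{\varepsilon,j} \equiv 1$ near $(x_j,y_j)$ and $\text{supp}\, \phi_{\varepsilon,j} \subset B_\varepsilon((x_j,y_j))$. Test $\langle J'_\kappa(u_n,v_n)\,|\,(\phi_{\varepsilon,j} u_n, 0)\rangle = o_n(1)$. The commutator/cross terms involving $\partial_x \phi_{\varepsilon,j}$ and $D^{s_1}_y \phi_{\varepsilon,j}$ are controlled via Lemma \ref{Bonder 1} and the compact weighted embedding of Lemma \ref{Bonder 2}, so they tend to zero as $n \to \infty$ and then $\varepsilon \to 0$; the coupling term $\kappa \alpha \int h \phi_{\varepsilon,j} |u_n|^\alpha |v_n|^\beta$ also vanishes in the limit, either because $\alpha+\beta < 2_{s_1}$ (strict subcriticality combined with Hölder and the $L^\infty$ bound on $h$) or because in the limit case $\alpha+\beta = 2_{s_1} < 2_{s_2}$ the $v_n$ factor is strongly $L^{2_{s_1}\beta/(2_{s_1}-\alpha)}_{\text{loc}}$ convergent (subcriticality for $v$). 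One is left with $\mu^1_j \leq \nu^1_j$, which, combined with $\mu^1_j \geq \Lambda_1 (\nu^1_j)^{2/2_{s_1}}$ and $2/2_{s_1} = (1-s_1)/(1+s_1)$, forces $\nu^1_j \geq \Lambda_1^{(1+s_1)/(2s_1)}$. Plugging this into the Nehari-form representation \eqref{two one four} for $J_\kappa(u_n,v_n)$, one obtains $c \geq \frac{s_1}{1+s_1}\Lambda_1^{(1+s_1)/(2s_1)}$, contradicting \eqref{three zero}. The same argument excludes the concentration at $(0,0)$ and, using a cutoff of the form $1-\phi_R$ with $\phi_R$ localized in $B_R(0)$, the loss of mass at infinity; analogous reasoning handles the $v_n$-atoms.

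Once all atoms are excluded, $\int_{\mathbb{R}^2} |u_n|^{2_{s_1}}\,\dxy \to \int_{\mathbb{R}^2} |u|^{2_{s_1}}\,\dxy$ and similarly for $v_n$; together with a.e. convergence this gives strong convergence in $L^{2_{s_1}} \times L^{2_{s_2}}$ via Brezis-Lieb (Lemma \ref{Brezis Lieb lemma}). The coupling integral converges by dominated convergence using $h \in L^1 \cap L^\infty$ and the already established strong critical convergence. Finally, expanding $\langle J'_\kappa(u_n,v_n) - J'_\kappa(u,v) \,|\,(u_n-u,v_n-v)\rangle = o_n(1)$ and substituting the now-strong convergence of all the nonlinear pieces leaves $\|(u_n,v_n)-(u,v)\|_\mathbb{D}^2 = o_n(1)$, which is the (PS) condition.

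The main obstacle is the localization step: unlike the classical local case, the $y$-fractional energy is nonlocal, so the cross terms arising from $D^{s_1}_y(\phi_{\varepsilon,j} u_n)$ do not simplify algebraically and must be handled by the Kato-type inequalities and weighted compact embeddings from Lemmas \ref{Bonder 1}--\ref{Bonder 2}. Everything else is then a careful but essentially routine bookkeeping.
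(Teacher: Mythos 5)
Your proposal is correct and follows essentially the same route as the paper: boundedness of the (PS) sequence, the Lions-type measure decomposition with atoms at the concentration points, the origin, and infinity together with the inequality $\mu_j \geq \Lambda_i\nu_j^{2/2_{s_i}}$, localization by testing $J'_\kappa$ against $(\Psi_{j,\epsilon}u_n,0)$ with the nonlocal cross terms handled by the weighted compact embedding, vanishing of the coupling term via Hölder (the paper does this uniformly in both subcritical regimes using $\frac{\alpha}{2_{s_1}}+\frac{\beta}{2_{s_2}}<1$ and $h\in L^1\cap L^\infty$), the resulting dichotomy $\nu_j=0$ or $\nu_j\geq\Lambda_1^{(1+s_1)/(2s_1)}$, the energy lower bound contradicting \eqref{three zero}, and finally strong convergence via Brezis--Lieb and the expansion of $\langle J'_\kappa(u_n,v_n)\,|\,(u_n-u,v_n-v)\rangle$. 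No gaps of substance.
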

\begin{proof}
 Let $\{(u_n,v_n)\}$ be a (PS)-sequence for $J_\kappa$ in $\mathbb{D}$. Then $\{(u_n,v_n)\}$ is a bounded sequence in $\mathbb{D}$ (see Lemma \ref{boundedness lemma}) and by reflexivity there exists a subsequence still denoted by $\{(u_n,v_n)\}$ itself and $(u,v) \in \mathbb{D}$ satisfying the following
\begin{align*}
    (u_{n},v_{n}) &\rightharpoonup (u,v) ~\text{weakly in}~\mathbb{D}, \\
  (u_{n},v_{n}) &\rightarrow (u,v)~\text{strongly in}~ L_{loc}^{q_{1}}(\mathbb{R}^{2}) \times L_{loc}^{q_{2}}(\mathbb{R}^{2})  ~\text{for}~1\leq q_{1} < 2_{s_{1}},1\leq q_{2} < 2_{s_{2}},\\
    (u_{n},v_{n}) &\rightarrow (u,v)  ~\text{a.e.~in}~~\mathbb{R}^{2}.  
\end{align*} 
 Now using \eqref{concentration 2s}, \eqref{concentration e1}   and \eqref{concentration e2}, which are analogous to the concentration–compactness principle of Bonder \cite[Theorem 1.1]{Bonder2018}, there exist a subsequence, still denoted as $\{(u_{n},v_{n})\}$, two at most countable sets of points $\{(x_{j},y_j)\}_{j \in \mathcal{J}} \subset \R^2$ and $\{(\Bar{x}_k, \Bar{y}_{k})\}_{k \in \mathcal{K}} \subset \R^2$, and non-negative numbers $$\{(\mu_{j},\nu_{j})\}_{j \in \mathcal{J}},~ \{(\Bar{\mu}_{k},~\Bar{\nu}_{k})\}_{k \in \mathcal{K}},\mu_{0},~\nu_{0},~\Bar{\mu}_{0} \text{ and }\Bar{\nu}_{0}$$ such that the following convergences hold $weakly^*$ in the sense of measures,
\begin{align} \label{Concentration compactness}
\begin{split}
     |u_n|^2+ |\partial_x u_n|^2 + |D^{s_{1}}_y u_n|^2  \rightharpoonup \d \mu & \geq |u|^2+ |\partial_x u|^2 + |D^{s_1 }_y u|^2 + \sum_{j\in \mathcal{J}} \mu_j \delta_{(x_{j},y_j)} + \mu_0 \delta_{(0,0)},\\
    |v_n|^2+ |\partial_x v_n|^2 + |D^{s_{2}}_y v_n|^2  \rightharpoonup \d \Bar{\mu} & \geq |v|^2+ |\partial_x v|^2 + |D^{s_2}_y v|^2 + \sum_{k\in \mathcal{K}} \Bar{\mu}_k \delta_{(\Bar{x}_k, \Bar{y}_{k})} + \mu_0 \delta_{(0,0)},\\
    |u_{n}|^{2_{s_{1}}}~ \rightharpoonup \mathrm{d}\nu & =|u|^{2_{s_{1}}} + \sum_{j \in \mathcal{J}} \nu_{j}\delta_{(x_{j},y_j)} + \nu_{0}\delta_{(0,0)}, \\
     |v_{n}|^{2_{s_{2}}}~ \rightharpoonup \mathrm{d}\Bar{\nu} &=|v|^{2_{s_{2}}} + \sum_{k \in \mathcal{K}} \Bar{\nu}_{k}\delta_{(\Bar{x}_k, \Bar{y}_{k})} + \Bar{\nu}_{0}\delta_{(0,0)}. 
\end{split}
 \end{align}
Moreover,
 \begin{align} \label{one seven}
     \begin{split}
         \Lambda_{1} \nu_j^{2/2_{s_1}} &\leq \mu_j \text{ for all } j \in \mathcal{J} \cup \{0\},\\
     \Lambda_{2} \Bar{\nu}_k^{2/2_{s_2}} &\leq \Bar{\mu}_k \text{ for all } k \in \mathcal{K} \cup \{0\},
     \end{split}
 \end{align}
where $\delta_{(0,0)},\delta_{(x_{j},y_j)},\delta_{(\Bar{x}_k, \Bar{y}_{k})}$ are the Dirac functions at the points $(0,0),(x_{j},y_j)~\text{and}~(\Bar{x}_k, \Bar{y}_{k})$ of $\R^2$ respectively. We denote the concentration of the sequence $\{u_n\}$ at infinity by the following numbers
\begin{align} \label{Concentration at infty}
    \begin{split}
        \nu_{\infty} &= \lim_{R \rightarrow \infty} \limsup_{n \rightarrow \infty} \int_{\sqrt{x^2 +y^2}>R} |u_{n}|^{2_{s_{1}}}\, \dxy, \\
        \mu_{\infty} &= \lim_{R \rightarrow \infty} \limsup_{n \rightarrow \infty} \int_{\sqrt{x^2 +y^2}>R} \big(|u_n|^2+ |\partial_x u_n|^2 + |D^{s_{1}}_y u_n|^2 \big)\, \dxy.\\
    \end{split}
\end{align}
We similarly define the concentrations of the sequence $\{v_n\}$ at infinity by the numbers $\Bar{\nu}_{\infty}$ and $\Bar{\mu}_{\infty}$. Now let us take a smooth cut-off function centered at $\{(x_j,y_j)\}_{j \in \mathcal{J}} \subset \R^2$ and defined as
\begin{align} \label{test function phi at j}
    \Psi_{j,\epsilon} = 1 ~\text{in}~~B_{\frac{\epsilon}{2}}(x_{j},y_j),~~ \Psi_{j,\epsilon} = 0 ~~\text{in}~~B^{c}_{\epsilon}(x_{j},y_j), ~0\leq \Psi_{j,\epsilon} \leq 1~\text{and}~~|\nabla  \Psi_{j,\epsilon}| \leq \frac{4}{\epsilon},
\end{align}
where $B_r(x_j,y_j) = \{(x,y) \in \R^2 \ : \ (x-x_j)^2 + (y-y_j)^2 < r^2  \}$.
Testing $J'_\kappa(u_n,v_n)$ with $(u_n \Psi_{j, \epsilon},0)$ we have
\begin{align*}
    &\langle J'_\kappa(u_n,v_n) |(u_n \Psi_{j, \epsilon},0) \rangle \\
    &= \int_{\R^2} \big(\partial_x u_n \partial_x(u_n \Psi_{j,\epsilon}) + D^{s_1}_y u_n \cdot D^{s_1}_y (u_n\Psi_{j,\epsilon})+ u_n u_n \Psi_{j,\epsilon} - |u_n|^{2_{s_1}-2}u_n u_n \Psi_{j,\epsilon} \big) \dxy\\
    &\qquad - \kappa \alpha \int_{\R^2} h(x,y) |u_n|^{\alpha-2}u_n u_n  \Psi_{j,\epsilon} |v_n|^{\beta} \dxy\\
    & = \int_{\R^2} \big(\partial_x u_n (u_n\partial_x(\Psi_{j,\epsilon}) + \Psi_{j,\epsilon} \partial_x u_n) + D^{s_1}_y u_n \cdot D^{s_1}_y (u_n\Psi_{j,\epsilon})+ u_n^2 \Psi_{j,\epsilon} - |u_n|^{2_{s_1}} \Psi_{j,\epsilon} \big) \dxy\\
    &\qquad - \kappa \alpha \int_{\R^2} h(x,y) |u_n|^{\alpha}  \Psi_{j,\epsilon} |v_n|^{\beta} \dxy.\\
\end{align*}
Now
\begin{align*}
    \int_{\R^2} D^{s_1}_y u_n \cdot D^{s_1}_y (u_n\Psi_{j,\epsilon})\, \dxy &= \frac{C(s_1)}{2} \int_{\R^3} \frac{|u_n(x,y)-u_n(x,z)|^2}{|y-z|^{1+2s_1}} \Psi_{j, \epsilon}(x,y)\, \d z \dxy\\
    &\quad +\underbrace{ \frac{C(s_1)}{2} \int_{\R^3} \frac{\big(u_n(x,y)-u_n(x,z)\big)  \big(\Psi_{j, \epsilon}(x,y) - \Psi_{j, \epsilon}(x,z)\big)}{|y-z|^{1+2s_1}} u_n(x,z)\, \d z \dxy}_\text{I}.
\end{align*}
\begin{equation}
    \text{I} = \frac{C(s_1)}{2} \int_{\R^3} \frac{\big(u_n(x,y)-u_n(x,z)\big)  \big(\Psi_{j, \epsilon}(x,y)\big) - \Psi_{j, \epsilon}(x,z)}{|y-z|^{1+2s_1}} u_n(x,z)\, \d z \dxy.
\end{equation}
Since $\{u_n \Psi_{j,\epsilon}\}$ is a bounded sequence in $\H$, the Holder's inequality gives
\begin{align*}
    \text{I} &\leq \frac{C(s_1)}{2} \( \int_{\R^3} \frac{|u_n(x,y)-u_n(x,z)|^2}{|y-z|^{1+2s_1}}\, \d z \dxy\)^{1/2} \( \int_{\R^3} \frac{|\Psi_{j, \epsilon}(x,y)-\Psi_{j, \epsilon}(x,z)|^2}{|y-z|^{1+2s_1}}|u_n(x,z)|^2\, \d z \dxy\)^{1/2} \\
    & \leq \frac{C(s_1)}{2} A(s) \int_{\R^3} \frac{|\Psi_{j, \epsilon}(x,y)-\Psi_{j, \epsilon}(x,z)|^2}{|y-z|^{1+2s_1}}|u_n(x,z)|^2\, \d z \dxy\\
    & = A(s) \int_{\R^2} |u_n(x,z)|^2 |D^{s_1}_y \Psi_{j,\epsilon}|^2 \, \d x \d z.
\end{align*}
By using Lemma \ref{Bonder 2}, we have
\begin{align*}
    \lim\limits_{n \rightarrow \infty}\text{I} \leq A(s) \int_{\R^2} |u(x,z)|^2 |D^{s_1}_y \Psi_{j,\epsilon}|^2 \, \d x \d z.
\end{align*}
Observe that $|D^{s_1}_y \Psi_{j, \epsilon}(x,z)|^2 \rightarrow 0$ as $ \epsilon \rightarrow 0$. So taking the limit as $\epsilon \rightarrow 0$ and using the dominated convergence theorem, we can conclude that
\begin{align}
  \lim\limits_{\epsilon \rightarrow 0}  \lim\limits_{n \rightarrow \infty}\text{I} = 0.
\end{align}
Now
\begin{align*}
    \int_{\R^2} \big(\partial_x u_n (u_n\partial_x \Psi_{j,\epsilon}  + \Psi_{j,\epsilon} \partial_x u_n)\,\dxy = \int_{\R^2} \partial_x u_n \partial_x \Psi_{j,\epsilon} u_n\,\dxy + \int_{\R^2} |\partial_x u_n|^2 \Psi_{j,\epsilon}\,\dxy.
\end{align*}
Note that
\[\lim\limits_{\epsilon \rightarrow 0}  \limsup\limits_{n \rightarrow \infty} \int_{\R^2} \partial_x u_n \partial_x (\Psi_{j,\epsilon}) u_n\,\dxy=0.\]
Therefore we have
\[\lim\limits_{\epsilon \rightarrow 0}  \limsup\limits_{n \rightarrow \infty} \int_{\R^2} \big(\partial_x u_n (u_n\partial_x \Psi_{j,\epsilon}  + \Psi_{j,\epsilon} \partial_x u_n)\,\dxy = \lim\limits_{\epsilon \rightarrow 0}  \limsup\limits_{n \rightarrow \infty} \int_{\R^2} |\partial_x u_n|^2 \Psi_{j,\epsilon}\,\dxy.\]
Thus we obtain
\begin{align*}
    0&=\lim\limits_{n \rightarrow \infty} \langle J'_\kappa(u_n,v_n) |(u_n \Psi_{j, \epsilon},0) \rangle\\
    &= \lim\limits_{n \rightarrow \infty} \( \frac{C(s_1)}{2} \int_{\R^3} \frac{|u_n(x,y)-u_n(x,z)|^2}{|y-z|^{1+2s_1}} \Psi_{j, \epsilon}(x,y)\, \d z \dxy\\
    &\quad + \frac{C(s_1)}{2} \int_{\R^3} \frac{(u_n(x,y)-u_n(x,z)) \cdot (\Psi_{j, \epsilon}(x,y) - \Psi_{j, \epsilon}(x,z)}{|y-z|^{1+2s_1}} u_n(x,z)\, \d z \dxy\\
    &\quad + \int_{\R^2} u_n^2 \Psi_{j, \epsilon} + \int_{\R^2} |\partial_x u_n|^2 \Psi_{j, \epsilon}\, \dxy + \int_{\R^2} \partial_x u_n \partial(\Psi_{j, \epsilon}) u_n\, \dxy\\
    &\quad - \int_{\R^2} |u_n|^{2_{s_1}} \Psi_{j, \epsilon}\, \dxy - \kappa \alpha \int_{\R^2} h(x,y) |u_n|^{\alpha} |v_n|^{\beta} \Psi_{j, \epsilon}\, \dxy \).
\end{align*}
Now taking $\epsilon \rightarrow 0$ in the above equality, we obtain
\begin{align} \label{two one}
    0 = \lim\limits_{\epsilon \rightarrow 0^+} \int_{\R^2} \Psi_{j, \epsilon} \d \mu - \lim\limits_{\epsilon \rightarrow 0^+} \int_{\R^2} \Psi_{j, \epsilon} \d \nu - \lim\limits_{\epsilon \rightarrow 0^+} \lim\limits_{n \rightarrow \infty}\kappa \alpha \int_{\R^2} h(x,y) |u_n|^{\alpha} |v_n|^{\beta} \Psi_{j, \epsilon}\, \dxy.
\end{align}
Notice that $0 \notin supp(\Psi_{j,\epsilon})$ for $\epsilon$ being sufficiently small and also $h \in L^1(\R^2) \cap L^{\infty}(\R^2)$, we deduce that
\begin{align*}
    \int_{\R^2} \Psi_{j, \epsilon} \d \mu &\geq \int_{\R^2} \Psi_{j, \epsilon} ~(\uone )\, \dxy + \sum_{j \in \mathcal{J}} \mu_j \delta_{(x_j,y_j)} \Psi_{j,\epsilon}+ \mu_0 \delta_0(\Psi_{j,\epsilon})\\
    &= \int_{\R^2} \Psi_{j, \epsilon} ~(\uone )\, \dxy + \sum_{j \in \mathcal{J}} \mu_j \delta_{(x_j,y_j)} \Psi_{j,\epsilon}+ \mu_0 \Psi_{j,\epsilon}(0,0).
\end{align*}
Consequently we have
\begin{equation} \label{two two}
    \lim\limits_{\epsilon \rightarrow 0^+}\int_{\R^2} \Psi_{j, \epsilon} \d \mu \geq \mu_j,
\end{equation}
and similarly we obtain
\begin{equation} \label{two three}
    \lim\limits_{\epsilon \rightarrow 0^+}\int_{\R^2} \Psi_{j, \epsilon} \d \nu = \nu_j.
\end{equation}
Observe that both conditions $\alpha+\beta < \min\{2_{s_1}, 2_{s_2}\}$ and $\alpha + \beta = \min\{2_{s_1}, 2_{s_2}\} < \max\{2_{s_1}, 2_{s_2}\}$ imply $\frac{\alpha}{2_{s_1}} + \frac{\beta}{2_{s_2}}<1$.
Now using Holder's inequality, we estimate the following integral
\begin{align*}
    &\int_{\R^2} h(x,y) |u_n|^{\alpha} |v_n|^{\beta} \Psi_{j, \epsilon}\, \dxy\\
    & \quad =  \int_{\R^2} \(h(x,y)\Psi_{j, \epsilon}(x,y)\)^{1-\frac{\alpha}{2_{s_1}} - \frac{\beta}{2_{s_2}}} \(h(x,y)\Psi_{j, \epsilon}(x,y)\)^{\frac{\alpha}{2_{s_1}} + \frac{\beta}{2_{s_2}}} |u_n|^{\alpha} |v_n|^{\beta} \dxy\\
    & \quad \leq \(\int_{\R^2}\(h(x,y)\Psi_{j, \epsilon}(x,y) \dxy \)^{1-\frac{\alpha}{2_{s_1}} - \frac{\beta}{2_{s_2}}} \(\int_{\R^2}\(h(x,y)|u_n|^{2_{s_1}} \Psi_{j, \epsilon}(x,y) \dxy \)^{\frac{\alpha}{2_{s_1}}} \\
    &\quad\quad \(\int_{\R^2}h(x,y)|v_n|^{2_{s_2}} \Psi_{j, \epsilon}(x,y) \dxy \)^{\frac{\beta}{2_{s_2}}}.
\end{align*}
The first integral on the RHS of the above inequality tends to $0$ as $\epsilon \rightarrow 0^+$ and the rest two integrals are bounded as limit $n \rightarrow \infty$. Hence, we can conclude that
\begin{equation} \label{two four}
 \lim\limits_{\epsilon \rightarrow 0^+}\lim\limits_{n \rightarrow \infty}   \int_{\R^2} h(x,y) |u_n|^{\alpha} |v_n|^{\beta} \Psi_{j, \epsilon}\, \dxy=0.
\end{equation}
Now combining \eqref{two one},\eqref{two two}, \eqref{two three} and \eqref{two four} gives
\begin{equation} \label{two five}
    \mu_j \leq \nu_j ~~\text{as   } \epsilon \rightarrow 0.
\end{equation}
From \eqref{one seven} and \eqref{two five}, we have the following
\[ \Lambda_1 \nu_j^{2/2_{s_1}} \leq \mu_j \leq \nu_j\]
which further gives
\begin{align} \label{two six}
    \text{either } \nu_j =0 ~\text{or}~ \nu_j \geq \Lambda_1^{\frac{1+s_1}{2s_1}}~\text{for all } j \in \mathcal{J} \text{ and } \mathcal{J} \text{ is finite}.
\end{align}
Also, we obtain
\begin{align} \label{two seven}
    \text{either } \Bar{\nu}_k =0 ~\text{or}~ \Bar{\nu}_k \geq \Lambda_2^{\frac{1+s_2}{2s_2}}~\text{for all } k \in \mathcal{K} \text{ and } \mathcal{K} \text{ is finite}.
\end{align}
It is already known that
\begin{align*}
    c &= \bigg(\frac{1}{2}  -\frac{1}{\alpha+\beta} \bigg) \|(u_{n},v_{n})\|_{\mathbb{D}}^{2}+ \bigg(\frac{1}{\alpha+\beta} - \frac{1}{2_{s_{1}}} \bigg) \|u_{n}\|_{{2_{s_{1}}}}^{2_{s_{1}}}+\bigg(\frac{1}{\alpha+\beta} - \frac{1}{2_{s_{2}}} \bigg) \|v_{n}\|_{{2_{s_{2}}}}^{2_{s_{2}}} +o_n(1)\\
   & \geq  \bigg(\frac{1}{2}  -\frac{1}{\alpha+\beta} \bigg)\bigg[ \|(u,v)\|_{\mathbb{D}}^{2} + \sum\limits_{j\in \mathcal{J}}\mu_{j} + \mu_{0} + \mu_{\infty} + \sum\limits_{k \in \mathcal{K}} \Bar{\mu}_{k} + 
       \Bar{\mu}_{0} + \Bar{\mu}_{\infty}  \bigg]\\
  & \quad + \bigg(\frac{1}{\alpha+\beta} - \frac{1}{2_{s_{1}}} \bigg) \bigg[\|u\|_{{2_{s_{1}}}}^{2_{s_{1}}} + \sum\limits_{j\in \mathcal{J}}\nu_{j} + \nu_{0} + \nu_{\infty}\bigg] + \bigg(\frac{1}{\alpha+\beta} - \frac{1}{2_{s_{2}}} \bigg) \bigg[\|v\|_{{2_{s_{2}}}}^{2_{s_{2}}} + \sum\limits_{k\in \mathcal{K}}\Bar{\nu}_{j} + \Bar{\nu}_{0} + \Bar{\nu}_{\infty}\bigg]\\
  & \geq \bigg(\frac{1}{2}  -\frac{1}{\alpha+\beta} \bigg) \bigg[ \Lambda_1 \sum_{j} \nu_j^{2/2_{s_1}} + \Lambda_2 \sum_{k} \Bar{\nu}_k^{2/2_{s_2}} + \Lambda_1 \nu_0^{2/2_{s_1}} + \Lambda_1 \nu_\infty^{2/2_{s_1}}+ \Lambda_2 \Bar{\nu}_0^{2/2_{s_2}} + \Lambda_2 \Bar{\nu}_\infty^{2/2_{s_2}} \bigg]\\
  & \quad + \bigg(\frac{1}{\alpha+\beta} - \frac{1}{2_{s_{1}}} \bigg) \bigg[ \sum\limits_{j\in \mathcal{J}}\nu_{j} + \nu_{0} + \nu_{\infty}\bigg] + \bigg(\frac{1}{\alpha+\beta} - \frac{1}{2_{s_{2}}} \bigg) \bigg[ \sum\limits_{k\in \mathcal{K}}\Bar{\nu}_{j} + \Bar{\nu}_{0} + \Bar{\nu}_{\infty}\bigg]. \numberthis \label{two eight}
\end{align*}
If the concentration is considered at the point $(x_j,y_j)$ only, i.e. $\nu_j>0$ and further from \eqref{two eight} we see that
\begin{align*}
    c \geq \( \frac{1}{2}- \frac{1}{\alpha+\beta}\)\bigg[ \Lambda_1 (\Lambda_1^{\frac{1+s_1}{2s_1}})^{\frac{2(1-s_1)}{2(1+s_1)}} \bigg] + \(\frac{1}{\alpha+\beta}- \frac{1}{2_{s_1}} \) \Lambda_1^{\frac{1+s_1}{2s_1}}=\(\frac{1}{2} - \frac{1}{2_{s_1}} \)\Lambda_1^{\frac{1+s_1}{2s_1}} = \frac{s_1}{1+s_1} \Lambda_1^{\frac{1+s_1}{2s_1}}.\numberthis \label{two nine}
\end{align*}
From \eqref{three zero} we conclude that $\nu_j=\mu_j=0$ for all $j \in \mathcal{J}$. Arguing similarly we also have $\Bar{\nu}_k=\Bar{\mu}_k=0$ for all $k \in \mathcal{K}$. Now if $\nu_0 \neq 0$, then
\begin{align*}
    c \geq \(\frac{1}{2} - \frac{1}{\alpha+\beta} \)\Lambda_{1}^{\frac{1+s_1}{2s_1}} + \(\frac{1}{\alpha + \beta} - \frac{1}{2_{s_1}} \)\Lambda_{1}^{\frac{1+s_1}{2s_1}} = \frac{s_1}{1+s_1} \Lambda_{1}^{\frac{1+s_1}{2s_1}},
\end{align*}
which contradicts the assumption \eqref{three zero}. Therefore, $\nu_0 =0$. By the same token, we also get $\Bar{\nu}_0=0$. Arguing in a similar way we also get $\nu_\infty=0=\Bar{\nu}_\infty$. Hence, there exists a subsequence that strongly converges in $L^{2_{s_1}}(\R^2) \times L^{2_{s_2}}(\R^2)$. Furthermore, we have
\begin{align*}
    \|(u_n-u,v_n-v)\|_{\mathbb{D}}^2 &= \langle J'_{\kappa}(u_n,v_n) | (u_n-u,v_n-v) \rangle + o_n(1),
\end{align*}
which infers that the sequence $\{(u_n,v_n)\}$ strongly converge in $\mathbb{D}$ and the (PS)- condition holds. 
\end{proof}

\section{Properties of \texorpdfstring{$\Bar{c}_{\kappa}$}{TEXT}\label{prop of ck} and proof of Theorem \ref{solution for kappa large}}
We are devoted to prove Proposition \ref{prop1} at the first part of this section.

\begin{proof}[Proof to Proposition \ref{prop1}]
	Firstly, we prove (1). Let $0 \leq \kappa_1 < \kappa_2$. We take a bounded (in $\mathbb{D}$) minimizing sequence $\{(u_n,v_n)\}$ to $c_{\kappa_1}$. There exists a unique $\eta_n = \eta_{(u_n,v_n)}$ such that $(\eta_nu_n,\eta_nv_n) \in \mathcal{N}_{\kappa_2}$. Then we have
	\begin{align*}
		o_n(1) + c_{\kappa_1} = J_{\kappa_1}(u_n,v_n) & \geq J_{\kappa_1}(\eta_nu_n,\eta_nv_n) \\ 
		& = J_{\kappa_2}(\eta_nu_n,\eta_nv_n) + (\kappa_2 - \kappa_1)\eta_n^{\alpha+\beta}\int_{\R^2} h(x,y) |u_n|^{\alpha} |v_n|^{\beta}\, \dxy \\
		& \geq J_{\kappa_2}(\eta_nu_n,\eta_nv_n) \geq c_{\kappa_2},
	\end{align*}
where we use \eqref{max} in the first inequality. Sending $n$ to infinity, one gets $c_{\kappa_1} \geq c_{\kappa_2}$. Furthermore, if $c_{\kappa_1}$ is achieved by $(u,v)$, we know $u \neq 0$ and $v \neq 0$. There exists a unique $\eta = \eta_{(u,v)}$ such that $(\eta u,\eta v) \in \mathcal{N}_{\kappa_2}$. Then we have
\begin{align*}
	c_{\kappa_1} = J_{\kappa_1}(u,v) & \geq J_{\kappa_1}(\eta u,\eta v) \\ 
	& = J_{\kappa_2}(\eta u,\eta v) + (\kappa_2 - \kappa_1)\eta^{\alpha+\beta}\int_{\R^2} h(x,y) |u|^{\alpha} |v|^{\beta}\, \dxy \\
	& > J_{\kappa_2}(\eta u,\eta v) \geq c_{\kappa_2}.
\end{align*}
This completes the proof to (1).

Next, we prove (2). By (1), it is sufficient to prove that $c_{\kappa_1} \leq \liminf\limits_{\kappa \to \kappa_1^+}c_{\kappa}$ for all $\kappa_1 \geq 0$. For any sequence $\{\kappa_n\}$ with $\kappa_n \to \kappa_1^+$, we can take a bounded (in $\mathbb{D}$) sequence $\{(u_n,v_n)\}$ such that $J_{\kappa_n}(u_n,v_n) = c_{\kappa_n} + o_n(1)$. There exists a unique $\eta_n > 0$ such that $(\eta_n u,\eta_n v) \in \mathcal{N}_{\kappa_1}$. We claim that $\eta_n$ is bounded. Indeed, if $ \eta_n \to \infty$ up to subsequences, using \eqref{Algebraic equation} (for $\kappa = \kappa_1$) we have $\|u_n\|_{{2_{s_{1}}^{}}} \to 0, \|v_n\|_{{2_{s_{2}}^{}}} \to 0, \int_{\mathbb{R}^{2}} h(x,y)|u_n|^{\alpha}|v_n|^{\beta}\mathrm{d}x \mathrm{d}y \to 0$ as $n \to \infty$. Then using \eqref{Algebraic equation} again (for $\kappa = \kappa_n$) one gets $\|(u_n,v_n)\|_{\mathbb{D}} \to 0$, which is a contradiction. Hence, $\eta_n$ is bounded. Then we have
\begin{align*}
	o_n(1) + c_{\kappa_n} = J_{\kappa_n}(u_n,v_n) & \geq J_{\kappa_n}(\eta_n u_n,\eta_n v_n) \\ 
	& = J_{\kappa_1}(\eta_n u_n,\eta_n v_n) + {(\kappa_n - \kappa_1)}\eta_n^{\alpha+\beta}\int_{\R^2} h(x,y) |u_n|^{\alpha} |v_n|^{\beta}\, \dxy \\
	& = J_{\kappa_1}(\eta_n u_n,\eta_n v_n) + o_n(1) \geq c_{\kappa_1} + o_n(1).
\end{align*}
Sending $n$ to infinity, one gets $\liminf\limits_{n \to \infty}c_{\kappa_n} \geq c_{\kappa_1}$. The proof to (2) is complete.

Finally, we prove (3). We firstly prove that $c_0 = \min\bigg\{ \frac{s_1}{1+s_1} \Lambda_1^{\frac{1+s_1}{2s_1}}, \frac{s_2}{1+s_2} \Lambda_2^{\frac{1+s_2}{2s_2}} \bigg\}$. \eqref{equivalent norm} reads in the case $\kappa = 0$ that for any $(u,v) \in \mathcal{N}_0$,
\begin{align*} 
		\|(u,v)\|_{\mathbb{D}}^{2} 
		= \|u\|_{{2_{s_{1}}^{}}}^{2_{s_{1}}^{}} + \|v\|_{{2_{s_{2}}^{}}}^{2_{s_{2}}^{}}.
\end{align*}
We will consider three cases: $u \neq 0, v = 0$; $u = 0, v \neq 0$;  and $u \neq 0, v\neq 0$. When $u \neq 0, v = 0$, from
$$
\Lambda_{1} \|u\|_{2_{s_1}}^2 \leq \|u\|_{\mathcal{H}^{1,s_1}(\R^2)}^2 = \|u\|_{{2_{s_{1}}^{}}}^{2_{s_{1}}^{}},
$$
we deduce that $\|u\|_{2_{s_1}} \geq \Lambda_{1}^{\frac{1-s_1}{4s_1}}$. Hence, by using \eqref{energy functional on Nehari manifold} one gets
$$
	J_{\kappa}(u,0) = \frac{s_{1}}{1+s_1} \|u\|_{{2_{s_{1}}^{}}}^{2_{s_{1}}^{}} \geq \frac{s_{1}}{1+s_1}\Lambda_{1}^{\frac{1+s_1}{2s_1}}.
$$
Similarly, for any $(0,v) \in \mathcal{N}_0$, we have 
$$
J_{\kappa}(0,v) \geq \frac{s_2}{1+s_2} \Lambda_2^{\frac{1+s_2}{2s_2}}.
$$
When $u \neq 0, v\neq 0$, from
$$
\Lambda_{1} \|u\|_{2_{s_1}}^2 + \Lambda_{2} \|v\|_{2_{s_2}}^2 \leq \|(u,v)\|_{\mathbb{D}}^{2} 
= \|u\|_{{2_{s_{1}}^{}}}^{2_{s_{1}}^{}} + \|v\|_{{2_{s_{2}}^{}}}^{2_{s_{2}}^{}},
$$
we deduce that $\Lambda_{1} \|u\|_{2_{s_1}}^2 \leq \|u\|_{{2_{s_{1}}^{}}}^{2_{s_{1}}^{}}$ or $\Lambda_{2} \|v\|_{2_{s_2}}^2 \leq \|v\|_{{2_{s_{2}}^{}}}^{2_{s_{2}}^{}}$. If the former holds, we have
$$
J_{\kappa}(u,v) = \frac{s_{1}}{1+s_1} \|u\|_{{2_{s_{1}}^{}}}^{2_{s_{1}}^{}} + \frac{s_{2}}{1+s_2} \|v\|_{{2_{s_{2}}^{}}}^{2_{s_{2}}^{}} > \frac{s_{1}}{1+s_1}\Lambda_{1}^{\frac{1+s_1}{2s_1}}.
$$
If the later holds, we have
$$
J_{\kappa}(u,v) = \frac{s_{1}}{1+s_1} \|u\|_{{2_{s_{1}}^{}}}^{2_{s_{1}}^{}} + \frac{s_{2}}{1+s_2} \|v\|_{{2_{s_{2}}^{}}}^{2_{s_{2}}^{}} > \frac{s_2}{1+s_2} \Lambda_2^{\frac{1+s_2}{2s_2}}.
$$
Then we can deduce that $c_0 \geq \min\bigg\{ \frac{s_1}{1+s_1} \Lambda_1^{\frac{1+s_1}{2s_1}}, \frac{s_2}{1+s_2} \Lambda_2^{\frac{1+s_2}{2s_2}} \bigg\}$. On the other hand, WLOG, we assume that $\frac{s_1}{1+s_1} \Lambda_1^{\frac{1+s_1}{2s_1}} \leq \frac{s_2}{1+s_2} \Lambda_2^{\frac{1+s_2}{2s_2}}$. Take a minimizing sequence $\{u_n\}$ to $\Lambda_{1}$, i.e., $\frac{\|u_n\|_{\mathcal{H}^{1,s_1}(\R^2)}^2}{\|u_n\|_{{2_{s_{1}}^{}}}^2} \to \Lambda_{1}$ as $n \to \infty$. By replacing by $\{\eta_n u_n\}$ (also denoted by $\{u_n\}$) for some $\{\eta_n\} \subset \mathbb{R}_+$, we may assume that $\{(u_n,0)\} \in \mathcal{N}_0$. Then
$$
J_{\kappa}(u_n,0) = \frac{s_{1}}{1+s_1} \|u_n\|_{{2_{s_{1}}^{}}}^{2_{s_{1}}^{}} \to \frac{s_{1}}{1+s_1}\Lambda_{1}^{\frac{1+s_1}{2{s_1}}} \text{ as } n \to \infty,
$$
implying that $c_0 \leq \frac{s_{1}}{1+s_1}\Lambda_{1}^{\frac{1+s_1}{2_{s_1}}}$. Hence, we obtain that $c_0 = \min\bigg\{ \frac{s_1}{1+s_1} \Lambda_1^{\frac{1+s_1}{2{s_1}}}, \frac{s_2}{1+s_2} \Lambda_2^{\frac{1+s_2}{ 2{s_2}}} \bigg\}$.

Further, We prove that $\Bar{c}_{\kappa} < \min\bigg\{ \frac{s_1}{1+s_1} \Lambda_1^{\frac{1+s_1}{2s_1}}, \frac{s_2}{1+s_2} \Lambda_2^{\frac{1+s_2}{{\tro 2}s_2}} \bigg\}$ for $\kappa$ large enough. For any $(u,v) \in \mathbb{D}\backslash \{(0,0)\}$, there exists a unique $t = t_{(u,v)}>0$ such that $(tu,tv) \in \mathcal{N}_{\kappa}$ and satisfies the algebraic equation
\begin{align*}
	\|(u,v)\|_{\mathbb{D}}^{2} =  t^{2_{s_{1}} -2} \|u\|_{{2_{s_{1}}}}^{2_{s_{1}}} +  t^{2_{s_{2}} -2} \|v\|_{{2_{s_{2}}}}^{2_{s_{2}}} + \kappa (\alpha+\beta ) t^{\alpha+ \beta -2} \int_{\mathbb{R}^{2}} h(x,y)|u|^{\alpha}|v|^{\beta}\,\dxy.\\
	t^{\alpha+ \beta -2} = \frac{ \|(u,v)\|_{\mathbb{D}}^{2} -  t^{2_{s_{1}} -2} \|u\|_{{2_{s_{1}}}}^{2_{s_{1}}} -  t^{2_{s_{2}} -2} \|v\|_{{2_{s_{2}}}}^{2_{s_{2}}}}{\kappa (\alpha+\beta )\int_{\mathbb{R}^{2}} h(x,y)|u|^{\alpha}|v|^{\beta}\,\dxy}, \hspace{4.4cm}
\end{align*}
Suppose $\kappa \rightarrow + \infty$, then using the assumption $\alpha+\beta >2$ we get $t=t_\kappa \rightarrow 0$. Subsequently, we can write
\begin{align*}
	\lim\limits_{\kappa \rightarrow +\infty}t_{\kappa}^{\alpha+ \beta -2}\kappa &=\lim\limits_{\kappa \rightarrow +\infty} \frac{ \|(u,v)\|_{\mathbb{D}}^{2} -  t_{\kappa}^{2_{s_{1}} -2} \|u\|_{{2_{s_{1}}}}^{2_{s_{1}}} -  t_{\kappa}^{2_{s_{2}} -2} \|v\|_{{2_{s_{2}}}}^{2_{s_{2}}}}{(\alpha+\beta )\int_{\mathbb{R}^{2}} h(x,y)|u|^{\alpha}|v|^{\beta}\,\dxy},\\
	\lim\limits_{\kappa \rightarrow +\infty}t_{\kappa}^{\alpha+ \beta -2}\kappa &= \frac{ \|(u,v)\|_{\mathbb{D}}^{2}}{(\alpha+\beta )\int_{\mathbb{R}^{2}} h(x,y)|u|^{\alpha}|v|^{\beta}\, \dxy},~\text{since}~t_{\kappa} \rightarrow 0~\text{as}~\kappa \rightarrow +\infty .
\end{align*}
Further we check the behaviour of the functional $ J_{\kappa}(t_{\kappa}u,t_{\kappa}v)$ as $\kappa \rightarrow + \infty$. The functional is given by 
\begin{align*}
	&J_{\kappa}(t_{\kappa}u,t_{\kappa}v) \\
	&= \frac{t_{\kappa}^{2}}{2} \|(u,v)\|_{\mathbb{D}}^{2} -\frac{t_{\kappa}^{2_{s_{1}}}}{2_{s_{1}}}\int_{\R^2} |u|^{2_{s_{1}}}\,\dxy 
	-\frac{t_{\kappa}^{2_{s_{2}}}}{2_{s_{2}}}\int_{\R^2} |v|^{2_{s_{2}}}\,\dxy - \kappa t_{\kappa}^{\alpha+\beta} \int_{\R^2}h(x,y)|u|^{\alpha}|v|^{\beta} \,\dxy \\
	&= \frac{t_{\kappa}^{2}}{2}\|(u,v)\|_{\mathbb{D}}^{2} -\frac{t_{\kappa}^{2_{s_{1}}}}{2_{s_{1}}}\int_{\R^2} |u|^{2_{s_{1}}}\, \dxy 
	-\frac{t_{\kappa}^{2_{s_{2}}}}{2_{s_{2}}}\int_{\R^2} |v|^{2_{s_{2}}}\, \dxy - \frac{1}{\alpha+\beta}t_{\kappa}^{2} \|(u,v)\|_{\mathbb{D}}^{2} + \\
	&\quad +\frac{t_{\kappa}^{2_{s_{1}}}}{\alpha+\beta}\int_{\R^2} |u|^{2_{s_{1}}}\, \dxy + \frac{t_{\kappa}^{2_{s_{2}}}}{\alpha+\beta}\int_{\R^2} |v|^{2_{s_{2}}}\, \dxy\\
	&= \bigg( \frac{1}{2} - \frac{1}{\alpha+\beta}\bigg)t_{\kappa}^{2} \|(u,v)\|_{\mathbb{D}}^{2}  + \bigg( \frac{1}{\alpha+\beta}- \frac{1}{2_{s_{1}}}\bigg)t_{\kappa}^{2_{s_{1}}}\int_{\R^2} |u|^{2_{s_{1}}}\, \dxy + \bigg( \frac{1}{\alpha+\beta}- \frac{1}{2_{s_{2}}}\bigg)t_{\kappa}^{2_{s_{2}}}\int_{\R^2} |v|^{2_{s_{2}}}\, \dxy\\
	&= \bigg( \frac{1}{2} - \frac{1}{\alpha+\beta} + A_\kappa(u,v) \bigg)t_{\kappa}^{2} \|(u,v)\|_{\mathbb{D}}^{2}.
\end{align*}
The term $A_\kappa(u,v)$ is given by
\begin{align*}
	A_\kappa(u,v) = \frac{t_{\kappa}^{2_{s_{1}}-2}}{\|(u,v)\|_{\mathbb{D}}^{2}} \bigg( \frac{1}{\alpha+\beta}- \frac{1}{2_{s_{1}}}\bigg)\int_{\R^2} |u|^{2_{s_{1}}}\,\dxy + \frac{t_{\kappa}^{2_{s_{2}}-2}}{\|(u,v)\|_{\mathbb{D}}^{2}} \bigg( \frac{1}{\alpha+\beta}- \frac{1}{2_{s_{2}}}\bigg)\int_{\R^2} |v|^{2_{s_{2}}}\,\dxy.
\end{align*}
Notice that $A_\kappa(u,v) \rightarrow 0$ as $\kappa \rightarrow +\infty$.
Hence, $J_{\kappa}(t_{\kappa}u,t_{\kappa}v) = o_\kappa(1)$ as $\kappa \rightarrow +\infty$ and there exists a $\kappa_0>0$ such that if $\kappa > \kappa_0$, where $\kappa_0$ is sufficiently large, then $J_{\kappa}(t_{\kappa}u,t_{\kappa}v) < \min\bigg\{ \frac{s_1}{1+s_1} \Lambda_1^{\frac{1+s_1}{2s_1}}, \frac{s_2}{1+s_2} \Lambda_2^{\frac{1+s_2}{{ 2}s_2}} \bigg\}$. Thus we obtain 
\begin{align*}
	\Bar{c}_{\kappa} &= \inf\limits_{(u,v) \in \mathcal{N}_{\kappa}} J_{\kappa}(u_{},v_{}) \leq J_{\kappa}(t_{\kappa}u,t_{\kappa}v) < \min\bigg\{ \frac{s_1}{1+s_1} \Lambda_1^{\frac{1+s_1}{2s_1}}, \frac{s_2}{1+s_2} \Lambda_2^{\frac{1+s_2}{{ 2}s_2}} \bigg\}
\end{align*} and hence from the above inequality we get
\begin{align*} \label{four one}
	\Bar{c}_{\kappa} < \min\bigg\{ \frac{s_1}{1+s_1} \Lambda_1^{\frac{1+s_1}{2s_1}}, \frac{s_2}{1+s_2} \Lambda_2^{\frac{1+s_2}{{2}s_2}} \bigg\}.
\end{align*}

Now we know that there exists $\kappa^* \in (0,\infty)$ such that 
$$
\Bar{c}_\kappa = \min\bigg\{ \frac{s_1}{1+s_1} \Lambda_1^{\frac{1+s_1}{2s_1}}, \frac{s_2}{1+s_2} \Lambda_2^{\frac{1+s_2}{2s_2}}  \bigg\}, \forall \kappa \in [0,\kappa^*),
$$
$$
\Bar{c}_\kappa < \min\bigg\{ \frac{s_1}{1+s_1} \Lambda_1^{\frac{1+s_1}{2s_1}}, \frac{s_2}{1+s_2} \Lambda_2^{\frac{1+s_2}{2s_2}}  \bigg\}, \forall \kappa > \kappa^*.
$$
Since $\Bar{c}_\kappa$ is continuous, we have $$\Bar{c}_{\kappa^*} = \min\bigg\{ \frac{s_1}{1+s_1} \Lambda_1^{\frac{1+s_1}{2s_1}}, \frac{s_2}{1+s_2} \Lambda_2^{\frac{1+s_2}{2s_2}}  \bigg\}.$$ The proof is complete.
\end{proof}

In the rest part of this section, we prove Theorem \ref{solution for kappa large}.

\begin{proof}[Proof of Theorem \ref{solution for kappa large}]
	We firstly consider the case that $\kappa > \kappa^*$ and prove (1). By Proposition \ref{prop1} (3), 
	$$
	\Bar{c}_\kappa < \min\bigg\{ \frac{s_1}{1+s_1} \Lambda_1^{\frac{1+s_1}{ 2{s_1}}}, \frac{s_2}{1+s_2} \Lambda_2^{\frac{1+s_2}{ 2{s_2}}}  \bigg\},
	$$
	for any $\kappa > \kappa^*$. Then we use Lemma \ref{PS compactness lemma 1} to ensure the existence of $(u_0,v_0) \in \mathbb{D}$ such that $\Bar{c}_{\kappa} = J_{\kappa}(u_0,v_0)$. Next, we define the function $\psi(t): (0, \infty) \rightarrow \mathbb{R}$ given by $\psi(t) = J_{\kappa}(tu,tv),$ for all $t>0.$ Then
	\begin{align*}    
		\psi(t) &= \frac{t^2 }{2}A_1 - \frac{t^{2_{s_1}}}{2_{s_1}} A_2 - \frac{t^{2_{s_2}}}{2_{s_2}}A_3 - A_4\kappa t^{\alpha + \beta},\\
		\psi'(t) &= A_1t - A_2t^{2_{s_1}-1} - A_3 t^{2_{s_2}-1} - A_4\kappa (\alpha +\beta)t^{\alpha +\beta-1},\\
		\psi''(t) &= A_1 - A_2(2_{s_{1}}-1)t^{2_{s_1}-2} - A_3(2_{s_2}-1) t^{2_{s_2}-2} - A_4\kappa (\alpha +\beta)(\alpha +\beta-1)t^{\alpha +\beta-2},\\
		\psi'''(t) &= - A_2(2_{s_1}-1)(2_{s_1}-2)t^{2_{s_1}-3} - A_3(2_{s_2}-1)(2_{s_2}-2) t^{2_{s_2}-3}\\
		& \hspace{5cm}- A_4\kappa (\alpha +\beta)(\alpha +\beta-1)(\alpha +\beta-2)t^{\alpha +\beta-3},
	\end{align*}
	where $A_1 = \| (u,v)\|_{\mathbb{D}}^{2},~A_2= \| u\|_{2_{s_1}}^{2_{s_1}},~A_3= \| v\|_{2_{s_2}}^{2_{s_2}},~A_4 = \int_{\R^2}h(x,y)|u|^{\alpha}|v|^{\beta}\,\,\dxy.$ Clearly, $A_1,A_2,A_3~\text{and}~A_4$ are non-negative. The condition $\alpha + \beta >2$ implies that $\psi'''(t)<0, \text{ for all } t>0$. Therefore the function $\psi'(t)$ is strictly concave for $t>0$. Also, we have $\lim\limits_{t \rightarrow 0}\psi'(t) = 0$ and $\lim\limits_{t \rightarrow +\infty}\psi'(t) = - \infty$. Moreover, the function $\psi'(t)>0$ for $t>0$ small enough. Hence, $\psi'(t)$ has a unique global maximum point at $t=t_0$ and $\psi'(t)$ has a unique root at $t_1 > t_0$ and $\psi''(t)<0$ for $t>t_0$, in particular $\psi''(t_1)<0$. Also, from the equation \eqref{second order derivative} and $\psi(t)$, we observe that $(tu,tv) \in \mathcal{N}_\kappa$ if and only if $\psi''(t)<0$.\\
	Now we consider the function $(|u_0|,|v_0|) \in \mathbb{D}$, then from the above arguments there exists a unique $t_2>0$ such that $t_2(|u_0|,|v_0|) = (t_2|u_0|,t_2|v_0|) \in \mathcal{N}_{\kappa}$ and $t_2$ satisfies the following algebraic equation
	
	\begin{align*} 
		\|(|u_0|,|v_0|)\|_{\mathbb{D}}^{2} =  t_2^{2_{s_{1}} -2} \|u_0\|_{{2_{s_{1}}}}^{2_{s_{1}}} +  t_2^{2_{s_{2}} -2} \|v_0\|_{{2_{s_{2}}}}^{2_{s_{2}}} + \kappa (\alpha+\beta ) t_2^{\alpha+ \beta -2} \int_{\R^2} h(x,y)|u_0|^{\alpha}|v_0|^{\beta}\,\dxy.
	\end{align*}
	Moreover, the pair $(u_0,v_0) \in \mathcal{N}_{\kappa}$ gives
	\begin{align*}
		\begin{split}
			\|(u_0,v_0)\|_{\mathbb{D}}^{2} 
			= \|u_0\|_{{2_{s_{1}}}}^{2_{s_{1}}} + \|v_0\|_{{2_{s_{2}}}}^{2_{s_{2}}} + \kappa (\alpha + \beta) \int_{\R^2} h(x,y)|u_0|^{\alpha}|v_0|^{\beta}\, \dxy.
		\end{split}
	\end{align*}
	Now from the inequality $\|(|u_0|,|v_0|)\|_{\mathbb{D}}^{} \leq  \|(u_0,v_0)\|_{\mathbb{D}}^{}$, one finds that $t_2 \leq 1.$ Since $(u_0,v_0) \in \mathcal{N}_{\kappa}$ and $(u_0,v_0)$ is the unique maximum point of $\psi(t) = J_{\kappa}(tu_0,tv_0), \forall ~t>t_0$. We can deduce that
	\[\Bar{c}_{\kappa} = J_{\kappa}(u_0,v_0) = \max\limits_{t>t_0}J_{\kappa}(tu_0,tv_0) \geq J_{\kappa}(t_2u_0,t_2v_0) \geq J_{\kappa}(t_2|u_0|,t_2|v_0|) \geq \Bar{c}_{\kappa}. \]  
	Thus, we can assume that $u_0 \geq 0$ and $v_0 \geq 0$ in $\R^2$. If $u_0 \equiv 0$, then $v_0$ satisfies the equation
	\begin{equation} \label{Decouple 2}
		-\partial_{xx}v + (- \Delta )_y^{s}v + v = v^{2_{s_2}-1} \text{  in } \R^2,
	\end{equation}
	which implies $v_0 \equiv 0$, since the above equation has only trivial solution (see \cite[Lemma 2.2]{Hichem2023}). Therefore, we get a contradiction to the fact that $(u_0,v_0) \in \mathcal{N}_k$. So we can conclude that either $u_0$ or $v_0$ is not identically zero.
	
	Notice that the Nehari manifold $\mathcal{N}_\kappa$ consists of all the non-trivial critical points of the functional $J_\kappa$. This implies that $c_\kappa \geq \Bar{c}_\kappa$. On the other hand, $c_\kappa \leq J_\kappa(u_0,v_0) = \Bar{c}_\kappa$. Hence, $c_\kappa = \Bar{c}_\kappa$, and so $(u_0,v_0)$ is a non-negative (constant sign) ground state solution to the system \eqref{main problem}.
	
	Next, we consider the case that $\kappa \in [0,\kappa^*)$ and prove (2). Suppose on the contrary that there exists a $\kappa \in [0,\kappa^*)$ such that $c_\kappa$ has a minimizer. By Proposition \ref{prop1} (1), we have $c_{ \kappa^*} < c_\kappa$, which is a contradiction to Proposition \ref{prop1} (3). The proof is complete.
\end{proof}

	\begin{remark}
		Notice that we have derived the non-negative solutions only and to obtain  a positive solution we require the maximum principle for the mixed type Schrodinger operator.
\end{remark} 

\section{Proofs of Theorem \ref{existence}} \label{proof sec6}

The proof of Proposition \ref{prop2} is similar to the one of Proposition \ref{prop1} and we omit the details.

To prove Theorem \ref{existence}, we firstly give a preliminary compactness result in the critical case. Recall that \eqref{H one} is given as
\begin{equation} 
	0 \leq h \in L^{1}(\R^2) \cap L^{\infty}(\R^2)
	,~h~\text{is not zero, continuous~near}~0~\text{and}~\infty,~\text{and}~h(0,0) = \lim\limits_{|(x,y)| \rightarrow +\infty}h(x,y) = 0. \tag\mathbb{H1} \nonumber
\end{equation}
and the space of radial functions in $\mathbb{D}$ is defined as
\[ \mathbb{D}_r := \mathcal{H}^{1,s}_r(\R^2) \times \mathcal{H}^{1, s}_r(\R^2) = \{(u,v) \in \mathbb{D} : u ~\text{and}~v~\text{are radially symmetric}\}. \]

\begin{lemma}\label{critical with h radial}
If ${\alpha} +{\beta} = 2_s $ and $h$ is a radial function satisfying (\ref{H one}). If $\{(u_{n},v_{n})\} \subset \mathbb{D}_{r}$ is a (PS) sequence for $J_{\kappa}$ at level $c \in \mathbb{R}$ such that c satisfies
\begin{equation} \label{three zero 2}
	c < \frac{s}{1+s} \Lambda_r^{\frac{1+s}{2s}},
\end{equation}
the sequence $\{(u_{n},v_{n})\}$ admits a strongly converging subsequence in $\mathbb{D}$.
\end{lemma}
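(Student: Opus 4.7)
The plan is to adapt the proof of Lemma \ref{PS compactness lemma 1} to the critical coupling case $\alpha+\beta=2_s$, exploiting two ingredients that are absent from the subcritical argument: the vanishing of $h$ at the origin and at infinity from \eqref{H one}, and the radial symmetry of the sequence.

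By Lemma \ref{boundedness lemma}, $\{(u_n,v_n)\}$ is bounded in $\mathbb{D}$, so up to a subsequence it converges weakly in $\mathbb{D}_r$ to some $(u,v)\in\mathbb{D}_r$ with a.e.\ convergence. Applying the concentration-compactness analysis of Section \ref{concentration sec} yields measures $\mu,\bar\mu,\nu,\bar\nu$ with countable atomic parts together with concentration at infinity. The key observation is that, since $u_n$ and $v_n$ are radially symmetric, the weak-$*$ limits $\nu$ and $\bar\nu$ are radial measures on $\mathbb{R}^2$; hence their atomic parts can only be supported at $(0,0)$, because an atom at any point $(x_0,y_0)\neq (0,0)$ would, by rotational invariance, generate an entire circle of atoms, contradicting finite total mass. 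The only concentration locations are therefore $(0,0)$ (with masses $\nu_0,\bar\nu_0$) and infinity (with masses $\nu_\infty,\bar\nu_\infty$).

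Next, I would test $\langle J_\kappa'(u_n,v_n),(u_n\Psi_{0,\epsilon},0)\rangle\to 0$ with a radial cut-off $\Psi_{0,\epsilon}$ supported near $(0,0)$, following the local calculations in Lemma \ref{PS compactness lemma 1}. The new difficulty is the coupling integral: in the critical case $\alpha+\beta=2_s$ the strict Hölder exponent $1-\alpha/2_{s_1}-\beta/2_{s_2}$ equals $0$, so the vanishing factor that worked in the subcritical proof is unavailable. Instead I would estimate
\[ \int_{\R^2} h(x,y)|u_n|^\alpha|v_n|^\beta \Psi_{0,\epsilon}\,\dxy \leq \Bigl(\sup_{B_\epsilon(0,0)} h\Bigr) \|u_n\|_{2_s}^\alpha \|v_n\|_{2_s}^\beta \leq C \sup_{B_\epsilon(0,0)} h, \]
and exploit that $h$ is continuous near $(0,0)$ with $h(0,0)=0$ to conclude that the right-hand side tends to $0$ as $\epsilon\to 0^+$. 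This yields $\mu_0\leq \nu_0$ (and analogously $\bar\mu_0\leq\bar\nu_0$). Combined with the refined radial Sobolev bound $\mu_0 \geq \Lambda_r \nu_0^{2/2_s}$ obtained by inserting the radial cut-off into the Bonder-type concentration-compactness argument and using \eqref{Best constant radial}, the standard dichotomy delivers either $\nu_0=0$ or $\nu_0\geq \Lambda_r^{(1+s)/(2s)}$, and in the latter case $\mu_0\geq \Lambda_r^{(1+s)/(2s)}$. A symmetric argument with cut-offs supported outside $B_R(0,0)$, together with $\lim_{|(x,y)|\to\infty}h(x,y)=0$, handles the concentrations at infinity.

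To conclude, I would exploit that in the critical case $\alpha+\beta=2_s$ the identity $J_\kappa(u_n,v_n)-\frac{1}{2_s}\langle J_\kappa'(u_n,v_n),(u_n,v_n)\rangle = c+o_n(1)$ collapses to
\[ c = \frac{s}{1+s}\lim_{n\to\infty}\|(u_n,v_n)\|_{\mathbb{D}}^2 \geq \frac{s}{1+s}\bigl(\|(u,v)\|_{\mathbb{D}}^2 + \mu_0 + \bar\mu_0 + \mu_\infty + \bar\mu_\infty\bigr). \]
If any of $\nu_0,\bar\nu_0,\nu_\infty,\bar\nu_\infty$ were positive, this would force $c\geq \frac{s}{1+s}\Lambda_r^{(1+s)/(2s)}$, contradicting \eqref{three zero 2}. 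Hence every concentration mass vanishes, which gives $u_n\to u$ and $v_n\to v$ in $L^{2_s}(\R^2)$, and then $\|(u_n-u,v_n-v)\|_{\mathbb{D}}^2 = \langle J_\kappa'(u_n,v_n)-J_\kappa'(u,v),(u_n-u,v_n-v)\rangle + o_n(1) \to 0$ by Brezis-Lieb. The main obstacle is precisely the critical coupling term: without the vanishing of $h$ at $(0,0)$ and at infinity the dichotomy $\mu_0\leq\nu_0$ breaks down, which is why both assumption \eqref{H one} and the radial framework are indispensable.
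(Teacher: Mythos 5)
Your argument is correct and follows essentially the same route as the paper: radial symmetry confines the possible atoms to the origin and infinity, the vanishing of $h$ at those points (assumption \eqref{H one}) kills the critical coupling term there, and the standard concentration-compactness energy estimate with the radial constant $\Lambda_r$ rules out any concentration below the threshold \eqref{three zero 2}. The only minor differences are that you bound the coupling integral by $\sup_{B_\epsilon(0,0)}h\,\|u_n\|_{2_s}^{\alpha}\|v_n\|_{2_s}^{\beta}$ whereas the paper uses a H\"older splitting of $h\Psi_{0,\epsilon}$ with exponents $\tfrac{\alpha}{2_s}+\tfrac{\beta}{2_s}=1$ (both hinge on $h(0,0)=\lim_{|(x,y)|\to\infty}h=0$), and that you make explicit the refinement $\mu_0\ge\Lambda_r\nu_0^{2/2_s}$ coming from the radiality of $\Psi_{0,\epsilon}u_n$, which is indeed needed to reach the radial threshold and is left implicit in the paper's ``similar arguments to Lemma \ref{PS compactness lemma 1}'' step.
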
  
\begin{proof}
     Since the functions $\{(u_{n},v_{n})\} \subset \mathbb{D}_{r}$ are radial, having concentrations at points other than $0~\text{or}~\infty$ gives a contradiction to countability of concentration points.
 Now if we want to avoid concentration at the points $0$ and $\infty$, it is sufficient to show that
 \begin{align}\label{three three nine}
     \lim\limits_{\epsilon\rightarrow 0}\limsup\limits_{n \rightarrow +\infty}\int_{\mathbb{R}^{2}}h(x,y)|u_{n}|^{\alpha}|v_{n}|^{\beta}\Psi_{0,\epsilon}(x,y)\,\dxy = 0,
 \end{align}
\begin{align}\label{three four zero}
     \lim\limits_{R\rightarrow +\infty}\limsup\limits_{n \rightarrow +\infty}\int_{\sqrt{x^2 + y^2}>R}h(x,y)|u_{n}|^{\alpha}|v_{n}|^{\beta}\Psi_{\infty,\epsilon}(x,y)\,\dxy = 0,
\end{align}
where the cut-off function $\Psi_{0,\epsilon}$ is centred at $(0,0)$ satisfying (\ref{test function phi at j}) and the cut-off function $\Psi_{\infty,\epsilon}$ supported near $\infty$ satisfying the following for $\mathcal{R}>0$ large enough
\begin{align}\label{phi at infinity}
 \Psi_{\infty,\epsilon} = 0 ~~\text{in}~~B_{\mathcal{R}}(0,0),~~ \Psi_{\infty,\epsilon} = 1 ~~\text{in}~~B^{c}_{\mathcal{R}+1}(0,0),~0\leq \Psi_{\infty,\epsilon} \leq 1 ~\text{and}~|\nabla  \Psi_{\infty,\epsilon}| \leq \frac{4}{\epsilon}.
\end{align}

 For any $s \in (0,1)$, the assumption ${\alpha} +{\beta} = 2_s$ implies that $\frac{\alpha}{2_s} + \frac{\beta}{2_s} = 1$. 
By using the assumption on $h$ in \eqref{H one} and the H\"older's inequality, we get
  \begin{align*}
\int_{\mathbb{R}^{2}}h(x,y)|u_{n}|^{\alpha}|v_{n}|^{\beta}\Psi_{0,\epsilon}(x,y)\,\dxy &= \int_{\mathbb{R}^{2}}(h(x,y) \Psi_{0,\epsilon}(x,y))^{\frac{\alpha}{2_{s}} + \frac{\beta}{2_{s}} }|u_{n}|^{\alpha}|v_{n}|^{\beta}\,\dxy\\
      &\leq \bigg(\int_{\mathbb{R}^{2}}h(x,y)|u_{n}|^{2_{s}}\Psi_{0,\epsilon}(x,y)\,\dxy \bigg)^{\frac{\alpha}{2_{s}}}\\ 
      &~~~~~~~~~ \bigg(\int_{\mathbb{R}^{2}}h(x,y)|v_{n}|^{2_{s}}\Psi_{0,\epsilon}(x,y)\,\dxy \bigg)^{\frac{\beta}{2_{s}}}.
  \end{align*}
Now from (\ref{Concentration compactness}) and (\ref{H one}), we have
\begin{align*}
   \lim\limits_{n \rightarrow +\infty} \int_{\mathbb{R}^{2}}h(x,y)|u_{n}|^{2_{s}}\Psi_{0,\epsilon}(x,y)\,\dxy &= \int_{\R^2}h(x,y)|u_0|^{2_{s}}\Psi_{0,\epsilon}(x,y)\,\dxy + \nu_{0}h(0,0) \\
    & \leq \int_{|(x,y)|\leq \epsilon}h(x,y)|u_0|^{2_{s}} \,\dxy, ~\text{since}~h(0,0)=0.
\end{align*}
and 
\begin{align*}
    \lim\limits_{n \rightarrow +\infty}\int_{\mathbb{R}^{2}}h(x,y)|v_{n}|^{2_{s}}\Psi_{0,\epsilon}(x,y)\,\dxy &= \int_{\R^2}h(x,y)|v_0|^{2_{s}}\Psi_{0,\epsilon}(x,y)\,\dxy + \Bar{\nu}_{0}h(0,0) \\
    & \leq \int_{|(x,y)|\leq \epsilon}h(x,y)|v_0|^{2_{s}} \,\dxy, ~\text{since}~h(0,0)=0.
\end{align*}
By combining the above three inequalities, it holds that
\begin{align*}
     &\lim\limits_{\epsilon\rightarrow 0}\limsup\limits_{n \rightarrow +\infty}\int_{\mathbb{R}^{2}}h(x,y)|u_{n}|^{\alpha}|v_{n}|^{\beta}\Psi_{0,\epsilon}(x,y)\,\dxy \\
     & \leq \lim\limits_{\epsilon \rightarrow 0} \Bigg[ \bigg(\int_{|(x,y)|\leq \epsilon}h(x,y)|u_0|^{2_{s}} \,\dxy \bigg)^{\frac{\alpha}{2_{s}}} \bigg(\int_{|(x,y)|\leq \epsilon}h(x,y)|v_0|^{2_{s}} \,\dxy \bigg)^{\frac{\beta}{2_{s}}}\Bigg] =0.
 \end{align*}
Similarly, we can prove (\ref{three four zero}) by using the assumption that $\lim\limits_{|(x,y)|\rightarrow +\infty} h(x,y) = 0$.

Then using similar arguments to Lemma \ref{PS compactness lemma 1}, we can complete the proof.
\end{proof}

\begin{proof}[Proof of Theorem \ref{existence}]
	By Proposition \ref{prop2} (3), 
	$$
	\Bar{c}_{\kappa,r} < \frac{s}{1+s} \Lambda_r^{\frac{1+s}{2s}},
	$$
	for any $\kappa > \kappa^{**}$. Then we use Lemma \ref{critical with h radial} to ensure the existence of $(u_0,v_0) \in \mathbb{D}_r$ such that $\Bar{c}_{\kappa,r} = J_{\kappa}(u_0,v_0)$. Then similar to the proof of Theorem \ref{solution for kappa large}, we know $(u_0,v_0)$ is a non-negative (constant sign) non-trivial solution to the system \eqref{main problem}. The proof is complete.
\end{proof}

	\section{Proof of Theorem \ref{Non-existence theorem}} \label{pohozaev sec}

In the following we prove the Pohozaev type identity which will be useful to derive the non-existence results to the system \eqref{main problem}.
\begin{lemma} \label{Pohozaev identity}
Assume that $(u,v) \in \mathbb{D}$ is a solution to \eqref{main problem} such that { $xu,yu \in L^2(\R^2)$,} $h \in L^{\infty}(\R^2)\cap C^{1}(\R^2)$, and $\alpha+\beta=2_s$. Then $(u,v)$ satisfies the following necessary conditions:
    \begin{align}\label{Pohozaev P1}
    s  &\int_{\R^2} \bigg( |\partial_{x}u|^2 + |\partial_{x}v|^2+ |(-\Delta)_y^{s/2} u|^2 + |(-\Delta)_y^{s/2} v|^2 \bigg)\, \dxy  \notag\\
    &=s  \int_{\R^2} (|u|^{2_s} + |v|^{2_s})\,\dxy+ \kappa s 2_s \int_{\R^2}  h(x,y)  |u|^{\alpha}\, |v|^{\beta}  \, \dxy - \kappa \int_{\R^2}  \big(yh_y + sx h_x\big) |u|^{\alpha}\, |v|^{\beta}  \, \dxy
\end{align}
and
\begin{align}\label{Pohozaev P2}
    \int_{\R^2} \bigg( |\partial_{x}u|^2 + |\partial_{x}v|^2+& |(-\Delta)_y^{s/2} u|^2 + |(-\Delta)_y^{s/2} v|^2 + |u|^2 + |v|^2 \bigg)\, \dxy  \notag\\
    &= \int_{\R^2}\big(|u|^{2_s}+ |v|^{2_s}\big)\,\dxy+ \kappa 2_s \int_{\R^2}  h(x,y)  |u|^{\alpha}\, |v|^{\beta}  \, \dxy.
\end{align}
\end{lemma}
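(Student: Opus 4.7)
The plan is to obtain \eqref{Pohozaev P2} as an energy-type identity and \eqref{Pohozaev P1} as a Pohozaev-type identity. Identity \eqref{Pohozaev P2} follows at once by testing with $(u,v)$ itself: since $(u,v)$ is a critical point of $J_\kappa$, we have $\langle J'_\kappa(u,v),(u,v)\rangle=0$, and plugging $(u_0,v_0)=(u,v)$ into the explicit formula for $J'_\kappa$ given right after \eqref{energy functional}, together with the assumptions $s_1=s_2=s$ and $\alpha+\beta=2_s$, collapses the two coupling terms into $-\kappa\cdot 2_s\int h|u|^\alpha|v|^\beta\,\dxy$ and yields \eqref{Pohozaev P2} directly.

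For \eqref{Pohozaev P1} I would use the anisotropic Pohozaev multiplier
\[
\mathcal{M}w \;:=\; sx\,\partial_x w \,+\, y\,\partial_y w,
\]
which is the infinitesimal generator of the dilation $(x,y)\mapsto(\lambda x,\lambda^{1/s}y)$ that renders the symbol $|\xi_1|^2+|\xi_2|^{2s}$ of $-\partial_{xx}+(-\Delta)^s_y$ quasi-homogeneous. Testing the two equations of \eqref{main problem} against $\mathcal{M}u$ and $\mathcal{M}v$ respectively and summing, each term evaluates by a standard manipulation. Two integrations by parts in $x$ give $\int(-\partial_{xx}u)\mathcal{M}u=\tfrac{s-1}{2}\int|\partial_x u|^2\,\dxy$. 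For the nonlocal part, the fact that $\Dely$ commutes with multiplication by $x$ and with $\partial_x$ reduces $\int(-\Delta)^s_y u\cdot sx\partial_x u$ to $-\tfrac{s}{2}\int|\Dely u|^2\,\dxy$, whereas $\int(-\Delta)^s_y u\cdot y\partial_y u=\tfrac{2s-1}{2}\int|\Dely u|^2\,\dxy$ follows via Plancherel using the symbol $|\xi_2|^{2s}$, the Fourier representation $y\partial_y\leftrightarrow -I-\xi_2\partial_{\xi_2}$, and the elementary identity $\partial_{\xi_2}(\xi_2|\xi_2|^{2s})=(2s+1)|\xi_2|^{2s}$; together they contribute $\tfrac{s-1}{2}\int|\Dely u|^2\,\dxy$. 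The mass and critical power terms, via the chain rules $u\mathcal{M}u=\tfrac12\mathcal{M}(u^2)$ and $u^{2_s-1}\mathcal{M}u=\tfrac{1}{2_s}\mathcal{M}(u^{2_s})$, contribute $-\tfrac{s+1}{2}\int u^2\,\dxy$ and $-\tfrac{s+1}{2_s}\int u^{2_s}\,\dxy=-\tfrac{1-s}{2}\int u^{2_s}\,\dxy$ (using $2_s=2(1+s)/(1-s)$). Finally, the coupling term uses the Leibniz identity
\[
\alpha u^{\alpha-1}v^\beta\,\mathcal{M}u \,+\, \beta u^\alpha v^{\beta-1}\,\mathcal{M}v \;=\; \mathcal{M}(u^\alpha v^\beta),
\]
so one integration by parts yields $\kappa\int h\,\mathcal{M}(u^\alpha v^\beta)\,\dxy = -\kappa\int\bigl(sxh_x+yh_y+(s+1)h\bigr)|u|^\alpha|v|^\beta\,\dxy$.

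Assembling the pieces and using $2_s=2(1+s)/(1-s)$ once more to cancel the coefficients in front of $\int h|u|^\alpha|v|^\beta\,\dxy$, the resulting Pohozaev identity collapses to the remarkably clean auxiliary relation
\[
s\int_{\R^2}\bigl(|u|^2+|v|^2\bigr)\,\dxy \;=\; \kappa\int_{\R^2}\bigl(sxh_x+yh_y\bigr)|u|^\alpha|v|^\beta\,\dxy;
\]
eliminating $\int(|u|^2+|v|^2)$ between this relation and \eqref{Pohozaev P2} then recovers \eqref{Pohozaev P1} exactly.

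The hard part will be the rigorous justification of the Pohozaev computation for a general weak solution $(u,v)\in\mathbb{D}$, because the multipliers $\mathcal{M}u,\mathcal{M}v$ are not a priori admissible test functions in $\mathbb{D}$. I would handle this by a standard approximation argument: mollify $(u,v)$, truncate by a smooth cutoff $\chi_R(x,y)=\chi(x/R,y/R)$, derive the identity for the smooth compactly supported approximants (for which the integration-by-parts and Fourier manipulations above are classical), and pass to the limit. The hypothesis $xu,yu\in L^2(\R^2)$ is precisely what controls the commutator between the nonlocal operator $(-\Delta)^s_y$ and the unbounded weight $y$ appearing in $y\partial_y u$, and guarantees that all cut-off boundary contributions vanish as the mollification parameter tends to zero and $R\to\infty$.
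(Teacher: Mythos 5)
Your proposal is correct and follows essentially the same route as the paper: testing with $(u,v)$ gives \eqref{Pohozaev P2}, and your anisotropic multiplier $sx\,\partial_x+y\,\partial_y$ is exactly the combination the paper forms by multiplying the $x\partial_x$-identity by $s$ and adding the $y\partial_y$-identity, with all your coefficients (including $\int(-\Delta)^s_yu\,(y\partial_yu)=\tfrac{2s-1}{2}\int|(-\Delta)^{s/2}_yu|^2$, which the paper gets from the commutator $[(-\Delta)^s_y,y\partial_y]=2s(-\Delta)^s_y$ rather than Plancherel) matching the paper's. The only organizational difference is that you derive the auxiliary relation $s\int(|u|^2+|v|^2)=\kappa\int(sxh_x+yh_y)|u|^\alpha|v|^\beta$ first and back-substitute to obtain \eqref{Pohozaev P1}, whereas the paper derives \eqref{Pohozaev P1} directly and extracts that relation afterwards in the proof of the non-existence theorem; these are algebraically equivalent.
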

\begin{proof}
{ Let $\mathcal{F}(u)$ denotes the Fourier transform of $u$ defined by 
\[\mathcal{F}(u)(\xi_1,\xi_2):= \int_{\R^2} u(x,y)e^{-2\pi i (x,y)\cdot (\xi_1,\xi_2)}\, \dxy\]
and $\mathcal{F}^{-1}$ denotes the inverse Fourier transform of u. Then we observe that
\begin{align}\label{poh1}
    x\partial_xu = - \mathcal{F}^{-1}(e^{-2\pi i \xi_1}\mathcal{F}(xu)+ \mathcal{F}u), \quad y\partial_yu = - \mathcal{F}^{-1}(e^{-2\pi i \xi_2}\mathcal{F}(yu)+ \mathcal{F}u).
\end{align}
It follows from \eqref{poh1} that $x\partial_xu,y\partial_yu \in L^2(\R^2)$ using $xu,yu \in L^{2}(\R^2)$.}\\
 We multiply the first equation in \eqref{main problem} by $y \partial_y u$ and then integrate over $\R^2$ to obtain the following:
\begin{align}\label{m1}
    - \int_{\R^2} y\, \partial_{xx}u\, \partial_{y}u\, \dxy &+ \int_{\R^2} (-\Delta)_y^s u\, (y \partial_y u)\, \dxy + \int_{\R^2} y u\,\partial_y u\, \dxy \notag\\
    & = \int_{\R^2} y |u|^{2_s -2}u\, \partial_y u\, \dxy + \kappa \alpha \int_{\R^2} yh(x,y)  |u|^{\alpha -2}u\, |v|^{\beta} \partial_y u\, \dxy.
\end{align}
Now we simplify the each integral in the above equation separately. We use Gauss-Divergence theorem to obtain
\begin{align}\label{m2}
    - \int_{\R^2} y \partial_{xx}u\, \partial_{y}u\, \dxy=  \int_{\R^2} y\partial_{x}u \partial_{xy}u\, \dxy = \frac{1}{2} \int_{\R^2} y\partial_{y}|\partial_{x}u|^2\, \dxy= - \frac{1}{2} \int_{\R^2} |\partial_{x}u|^2\, \dxy.
\end{align}
\begin{align}\label{m3}
    \int_{\R^2} yu \partial_{y}u\, \dxy = \frac{1}{2}\int_{\R^2} y \partial_{y}|u|^2\, \dxy = - \frac{1}{2} \int_{\R^2} |u|^2\,\dxy.\hspace{4cm}
\end{align}
\begin{align}\label{m4}
    \int_{\R^2} y|u|^{2_s-2}u \partial_{y}u\, \dxy = \frac{1}{2_s}\int_{\R^2} y \partial_{y}|u|^{2_s}\, \dxy = - \frac{1}{2_s} \int_{\R^2} |u|^{2_s}\,\dxy.\hspace{3cm}
\end{align}
\begin{align}\label{m5}
    &\int_{\R^2} yh(x,y)  |u|^{\alpha -2}u\, |v|^{\beta} \partial_y u\, \dxy =\quad \frac{1}{\alpha} \int_{\R^2} yh(x,y)  \partial_y|u|^{\alpha}\, |v|^{\beta}  \, \dxy \notag\\
    &\qquad = - \frac{1}{\alpha} \int_{\R^2} (yh_y(x,y) + h(x,y))  |u|^{\alpha}\, |v|^{\beta}  \, \dxy- \frac{\beta}{\alpha} \int_{\R^2} yh(x,y)|v|^{\beta-2}v (\partial_y v)  |u|^{\alpha}   \, \dxy.
\end{align}
Note that $[(-\Delta)_y^{s}, y \partial_y] = 2s(-\Delta)_y^s$ and $(-\Delta)_y^s(y \partial_y u)= 2s(-\Delta)_y^s u + y \partial_y ((-\Delta)_y^s u)$. Using the previous identities, we further evaluate the following integral.
\begin{align}
    \int_{\R^2} (-\Delta)_y^s u\, (y \partial_y u)\, \dxy &= \int_{\R^2}  u\, (-\Delta)_y^s(y \partial_y u)\, \dxy \notag\\
    &= 2s \int_{\R^2}  u (-\Delta)_y^s u\, \dxy + \int_{\R^2}  y u \partial_y(-\Delta)_y^s u\, \dxy \notag\\
    &= (2s-1) \int_{\R^2}  u (-\Delta)_y^s u\, \dxy - \int_{\R^2}  y \partial_y u (-\Delta)_y^s u\, \dxy \notag
\end{align}
From the above, we obtain
\begin{align}\label{m6}
    \int_{\R^2} (-\Delta)_y^s u\, (y \partial_y u)\, \dxy = \frac{2s-1}{2} \int_{\R^2}  u (-\Delta)_y^s u\, \dxy = - \frac{1-2s}{2} \int_{\R^2} |(-\Delta)_y^{s/2} u|^2 \, \dxy.
\end{align}
Combining the equations \eqref{m1}-\eqref{m6}, we get
\begin{align}
   & - \frac{1}{2} \int_{\R^2} |\partial_{x}u|^2\, \dxy - \frac{1-2s}{2} \int_{\R^2} |(-\Delta)_y^{s/2} u|^2 \, \dxy  - \frac{1}{2} \int_{\R^2} |u|^2\,\dxy \notag\\
    &\qquad = - \frac{1}{2_s} \int_{\R^2} |u|^{2_s}\,\dxy - \kappa \int_{\R^2} (yh_y(x,y) + h(x,y))  |u|^{\alpha}\, |v|^{\beta}  \, \dxy - \kappa \beta \int_{\R^2} yh(x,y)|v|^{\beta-2}v (\partial_y v)  |u|^{\alpha}   \, \dxy. \notag
\end{align}
We write the above equation as 
\begin{align}\label{m7}
   &  \frac{1}{2} \int_{\R^2} |\partial_{x}u|^2\, \dxy + \frac{1-2s}{2} \int_{\R^2} |(-\Delta)_y^{s/2} u|^2 \, \dxy  + \frac{1}{2} \int_{\R^2} |u|^2\,\dxy \notag\\
    &\qquad =  \frac{1}{2_s} \int_{\R^2} |u|^{2_s}\,\dxy + \kappa \int_{\R^2} (yh_y(x,y) + h(x,y))  |u|^{\alpha}\, |v|^{\beta}  \, \dxy + \kappa \beta \int_{\R^2} yh(x,y)|v|^{\beta-2}v (\partial_y v)  |u|^{\alpha}   \, \dxy. 
\end{align}
Similarly, by multiplying the second equation in \eqref{main problem} by $y \partial_y v$ and integrating over $\R^2$, we obtain
\begin{align}\label{m8}
   &  \frac{1}{2} \int_{\R^2} |\partial_{x}v|^2\, \dxy + \frac{1-2s}{2} \int_{\R^2} |(-\Delta)_y^{s/2} v|^2 \, \dxy  + \frac{1}{2} \int_{\R^2} |v|^2\,\dxy \notag\\
    &\qquad =  \frac{1}{2_s} \int_{\R^2} |v|^{2_s}\,\dxy + \kappa \int_{\R^2} (yh_y(x,y) + h(x,y))  |u|^{\alpha}\, |v|^{\beta}  \, \dxy + \kappa \alpha \int_{\R^2} yh(x,y)|u|^{\alpha-2}u (\partial_y u)  |v|^{\beta}   \, \dxy. 
\end{align}
On the other hand, when we multiply the first equation in \eqref{main problem} by $x \partial_x u$ and integrating over $\R^2$, we have the following equation
\begin{align}\label{m9}
    - \int_{\R^2} x\, \partial_{xx}u\, \partial_{x}u\, \dxy &+ \int_{\R^2} (-\Delta)_y^s u\, (x \partial_x u)\, \dxy + \int_{\R^2} x u\,\partial_x u\, \dxy \notag\\
    & = \int_{\R^2} x |u|^{2_s -2}u\, \partial_x u\, \dxy + \kappa \alpha \int_{\R^2} xh(x,y)  |u|^{\alpha -2}u\, |v|^{\beta} \partial_x u\, \dxy.
\end{align}
Using the similar arguments as done to obtain \eqref{m7}, we get
\begin{align}\label{m10}
   &  -\frac{1}{2} \int_{\R^2} |\partial_{x}u|^2\, \dxy + \frac{1}{2} \int_{\R^2} |(-\Delta)_y^{s/2} u|^2 \, \dxy  + \frac{1}{2} \int_{\R^2} |u|^2\,\dxy \notag\\
    &\qquad =  \frac{1}{2_s} \int_{\R^2} |u|^{2_s}\,\dxy + \kappa \int_{\R^2} (xh_x(x,y) + h(x,y))  |u|^{\alpha}\, |v|^{\beta}  \, \dxy + \kappa \beta \int_{\R^2} xh(x,y)|v|^{\beta-2}v (\partial_x v)  |u|^{\alpha}   \, \dxy. 
\end{align}
Similarly, multiplying the second equation in \eqref{main problem} by $x \partial_x v$, we obtain
\begin{align}\label{m11}
   &  -\frac{1}{2} \int_{\R^2} |\partial_{x}v|^2\, \dxy + \frac{1}{2} \int_{\R^2} |(-\Delta)_y^{s/2} v|^2 \, \dxy  + \frac{1}{2} \int_{\R^2} |v|^2\,\dxy \notag\\
    &\qquad =  \frac{1}{2_s} \int_{\R^2} |v|^{2_s}\,\dxy + \kappa \int_{\R^2} (xh_x(x,y) + h(x,y))  |u|^{\alpha}\, |v|^{\beta}  \, \dxy + \kappa \alpha \int_{\R^2} xh(x,y)|u|^{\alpha-2}u (\partial_x u)  |v|^{\beta}   \, \dxy. 
\end{align}
Multiplying \eqref{m10} by $s$ and adding into \eqref{m7} gives
\begin{align}\label{m12  P1}
   &  \frac{1-s}{2} \int_{\R^2} |\partial_{x}u|^2\, \dxy + \frac{1-s}{2} \int_{\R^2} |(-\Delta)_y^{s/2} u|^2 \, \dxy  + \frac{1+s}{2} \int_{\R^2} |u|^2\,\dxy \notag\\
    & =  \frac{1+s}{2_s} \int_{\R^2} |u|^{2_s}\,\dxy + \kappa (1+s) \int_{\R^2}  h(x,y)  |u|^{\alpha}\, |v|^{\beta}  \, \dxy + \kappa \int_{\R^2}  (yh_y(x,y) + sx h_x(x,y)) |u|^{\alpha}\, |v|^{\beta}  \, \dxy \notag\\
    &\qquad + \kappa \beta \int_{\R^2} \big(yh(x,y)|v|^{\beta-2}v (\partial_y v) + sxh |v|^{\beta-2}v \partial_x v \big)  |u|^{\alpha}   \, \dxy.
\end{align}
Similarly, multiplying \eqref{m11} by $s$ and adding into \eqref{m8} gives
\begin{align}\label{m13 P2}
   &  \frac{1-s}{2} \int_{\R^2} |\partial_{x}v|^2\, \dxy + \frac{1-s}{2} \int_{\R^2} |(-\Delta)_y^{s/2} v|^2 \, \dxy  + \frac{1+s}{2} \int_{\R^2} |v|^2\,\dxy \notag\\
    & =  \frac{1+s}{2_s} \int_{\R^2} |v|^{2_s}\,\dxy + \kappa (1+s) \int_{\R^2}  h(x,y)  |u|^{\alpha}\, |v|^{\beta}  \, \dxy + \kappa \int_{\R^2}  (yh_y(x,y) + sx h_x(x,y)) |u|^{\alpha}\, |v|^{\beta}  \, \dxy \notag\\
    &\qquad + \kappa \alpha \int_{\R^2} \big(yh(x,y)|u|^{\alpha-2}u (\partial_y u) + sxh |u|^{\alpha-2}u \partial_x u \big)  |v|^{\beta}\, \dxy.
\end{align}
Moreover, by multiplying first and second equation in \eqref{main problem} by $u$ and $v$ respectively, and integrating over $\R^2$ we obtain the following identities:
\begin{align}\label{m14 I1}
      \int_{\R^2} |\partial_{x}u|^2\, \dxy +  \int_{\R^2} |(-\Delta)_y^{s/2} u|^2 \, \dxy  + \int_{\R^2} |u|^2\,\dxy =   \int_{\R^2} |u|^{2_s}\,\dxy + \kappa \alpha \int_{\R^2}  h(x,y)  |u|^{\alpha}\, |v|^{\beta}  \, \dxy
\end{align}
and
\begin{align}\label{m15 I2}
      \int_{\R^2} |\partial_{x}v|^2\, \dxy +  \int_{\R^2} |(-\Delta)_y^{s/2} v|^2 \, \dxy  + \int_{\R^2} |v|^2\,\dxy =   \int_{\R^2} |v|^{2_s}\,\dxy + \kappa \beta \int_{\R^2}  h(x,y)  |u|^{\alpha}\, |v|^{\beta}  \, \dxy.
\end{align}
From \eqref{m14 I1} and \eqref{m12  P1}, we deduce that
\begin{align}\label{m16}
   &  s \int_{\R^2} |\partial_{x}u|^2\, \dxy + s \int_{\R^2} |(-\Delta)_y^{s/2} u|^2 \, \dxy  \notag\\
    & = (1+s) \bigg(\frac{1}{2}-\frac{1}{2_s}\bigg) \int_{\R^2} |u|^{2_s}\,\dxy + \frac{\kappa (1+s) (\alpha-2)}{2} \int_{\R^2}  h(x,y)  |u|^{\alpha}\, |v|^{\beta}  \, \dxy \notag\\
    & \qquad - \kappa \int_{\R^2}  (yh_y(x,y) + sx h_x(x,y)) |u|^{\alpha}\, |v|^{\beta}  \, \dxy \notag\\
    &\qquad - \kappa \beta \int_{\R^2} \big(yh(x,y)|v|^{\beta-2}v (\partial_y v) + sxh |v|^{\beta-2}v \partial_x v \big)  |u|^{\alpha}   \, \dxy.
\end{align}
Further, from \eqref{m15 I2} and \eqref{m13  P2}, we also obtain the following
\begin{align}\label{m17}
   &  s \int_{\R^2} |\partial_{x}v|^2\, \dxy + s \int_{\R^2} |(-\Delta)_y^{s/2} v|^2 \, \dxy  \notag\\
    & = (1+s) \bigg(\frac{1}{2}-\frac{1}{2_s}\bigg) \int_{\R^2} |v|^{2_s}\,\dxy + \frac{\kappa (1+s) (\beta-2)}{2} \int_{\R^2}  h(x,y)  |u|^{\alpha}\, |v|^{\beta}  \, \dxy \notag\\
    & \qquad - \kappa \int_{\R^2}  (yh_y(x,y) + sx h_x(x,y)) |u|^{\alpha}\, |v|^{\beta}  \, \dxy \notag\\
    &\qquad - \kappa \alpha \int_{\R^2} \big(yh(x,y)|u|^{\alpha-2}u (\partial_y v) + sxh |u|^{\alpha-2}u \partial_x u \big)  |v|^{\beta}   \, \dxy.
\end{align}
Adding \eqref{m16} and \eqref{m17} gives
\begin{align}
    s& \bigg( \int_{\R^2} |\partial_{x}u|^2\, \dxy +\int_{\R^2} |\partial_{x}v|^2\, \dxy + \int_{\R^2} |(-\Delta)_y^{s/2} u|^2 \, \dxy +  \int_{\R^2} |(-\Delta)_y^{s/2} v|^2 \, \dxy -\int_{\R^2} |u|^{2_s}\,\dxy -\int_{\R^2} |v|^{2_s}\,\dxy\bigg) \notag\\
    &=\frac{\kappa (1+s) (2_s-4)}{2} \int_{\R^2}  h(x,y)  |u|^{\alpha}\, |v|^{\beta}  \, \dxy -2 \kappa \int_{\R^2}  (yh_y(x,y) + sx h_x(x,y)) |u|^{\alpha}\, |v|^{\beta}  \, \dxy \notag\\
    &\qquad - \kappa \int_{\R^2} h(x,y)|u|^\alpha \big[ y \partial_y(|v|^\beta) + sx \partial_x(|v|^\beta) \big]\, \dxy - \kappa \int_{\R^2} h(x,y)\big[ y \partial_y(|u|^\alpha) + sx \partial_x(|u|^\alpha) \big] |v|^\beta \, \dxy \notag
\end{align}
after simplifying the above expression, we get 
\begin{align}\label{m18}
    s& \bigg( \int_{\R^2} |\partial_{x}u|^2\, \dxy +\int_{\R^2} |\partial_{x}v|^2\, \dxy + \int_{\R^2} |(-\Delta)_y^{s/2} u|^2 \, \dxy +  \int_{\R^2} |(-\Delta)_y^{s/2} v|^2 \, \dxy -\int_{\R^2} |u|^{2_s}\,\dxy -\int_{\R^2} |v|^{2_s}\,\dxy\bigg) \notag\\
    &=\kappa s 2_s \int_{\R^2}  h(x,y)  |u|^{\alpha}\, |v|^{\beta}  \, \dxy - \kappa \int_{\R^2}  \big(yh_y(x,y) + sx h_x(x,y)\big) |u|^{\alpha}\, |v|^{\beta}  \, \dxy.
\end{align}
Hence, the proof is complete.
\end{proof} 
Now we are going to prove the non-existence result using the previous lemma.
\begin{proof}[Proof of Theorem \ref{Non-existence theorem}]
$(a)$ Let $(u,v)$ be a solution to \eqref{main problem} { such that $xu,yu \in L^2(\R^2)$}. We first divide the equation \eqref{Pohozaev P1} by $s$ and subsequently, we subtract \eqref{Pohozaev P1} into \eqref{Pohozaev P2} to get
\begin{align} \label{m19}
    \|u\|_2^2 + \|v\|_2^2 = \frac{\kappa}{s} \int_{\R^2}  \big(yh_y(x,y) + sx h_x(x,y)\big) |u|^{\alpha}\, |v|^{\beta}  \, \dxy. 
\end{align}
 One can notice that $u=v=0$ for $\kappa=0$ i.e., there is only a trivial solution for $\kappa=0$. Moreover, if $h$ is a constant function and $\kappa$ is any real number, from \eqref{m19} we have
\[\|u\|_2^2 + \|v\|_2^2 =0,\]
which implies $u,v$ are both identically zero a.e. Hence, there does not exist any non-trivial solution to \eqref{main problem} in this case.\\
$(b)$ We are given that $h \in L^{\infty}(\R^2) \cap C^1(\R^2)$ satisfies $\kappa xh_x\leq 0, \kappa yh_y \leq 0$. Using the hypotheses on $h$ in \eqref{m19}, we deduce that
\[\|u\|_2^2 + \|v\|_2^2 \leq 0. \]
From the above inequality, we can conclude that $u,v$ are both identically zero almost everywhere. Hence, there does not exist any non-trivial solution to \eqref{main problem} for such $h$.
\end{proof} 
 \begin{example}
The following examples of $h$ give the non-existence of solutions to the system \eqref{main problem}.
\begin{itemize}
\item[(a)] If we consider
\begin{align*}
   \displaystyle  h(x,y) = \begin{cases}
            e^{-\frac{1}{1-x^2-y^2}} & \text{ if } \sqrt{x^2+y^2}<1\\
            0 & \text{ if } \sqrt{x^2+y^2} \geq 1
        \end{cases}
    \end{align*}
then $h \in C_c^\infty(\R^2)$ and $\kappa xh_x\leq 0$, $\kappa yh_y \leq 0$ for $\kappa>0$. Thus by Theorem \eqref{Non-existence theorem}, there does not exist any non-trivial solution {$(u,v)$} to \eqref{main problem} {such that $xu,yu \in L^2(\R^2)$}.
\item[(b)] Moreover, if $\displaystyle h(x,y) = e^{\frac{1}{1+x^2+y^2}}$, then $h \in L^{\infty}(\R^2) \cap C^{\infty}(\R^2)$ with $\kappa xh_x\leq 0$ and $\kappa yh_y\leq 0$ for $\kappa>0$. As a result, no non-trivial solution $(u,v)$ exists to \eqref{main problem} { such that $xu,yu \in L^2(\R^2)$}.
\end{itemize}
  \end{example} 
\section*{Acknowledgments}
RK wants to thank the support of the CSIR fellowship, file no. 09/1125(0016)/2020--EMR--I for his Ph.D. work. TM acknowledges the support of the Start up Research Grant from DST-SERB, sanction no. SRG/2022/000524.
\bibliographystyle{abbrv}

\end{document}